\newtheorem{thm}{Theorem}[section]
\newtheorem{cor}[thm]{Corollary}
\newtheorem{lem}[thm]{Lemma}
\theoremstyle{definition}
\newtheorem*{rems}{Remarks}
\newtheorem{defn}[thm]{Definition}
\theoremstyle{definition}
\theoremstyle{remark}
\numberwithin{equation}{section}
\begin{document}

\title{Variance of real zeros of random orthogonal polynomials}

\author{Doron S. Lubinsky}
\email{lubinsky@math.gatech.edu}
\address{School of Mathematics, Georgia Institute of Technology, Atlanta, GA 30332, USA}

\author{Igor E. Pritsker}
\email{igor@math.okstate.edu}
\address{Department of Mathematics, Oklahoma State University, Stillwater, OK 74078, USA}

\begin{abstract}
We determine the asymptotics for the variance of the number of zeros of
random linear combinations of orthogonal polynomials of degree
$\leq n$ in subintervals $\left [ a,b\right ] $ of the support of the underlying
orthogonality measure $\mu $. We show that, as
$n\rightarrow \infty $, this variance is asymptotic to $cn$, for some explicit
constant $c>0$.
\end{abstract}

\keywords{Random polynomials, Real zeros, Variance, Orthogonal polynomials}

\maketitle

\section{Introduction and main results}
\label{sec1}

Let $\mu $ be a positive Borel measure compactly supported in the real
line, whose support contains infinitely many points. For
$n\geq 0,\ n\in {\mathbb{Z%
}}$, we consider the $n$th orthonormal polynomial
%
\begin{equation}\label{eq1.1}
p_{n}\left ( x\right ) =\gamma _{n}x^{n}+...
\end{equation}%
for $\mu $, with $\gamma _{n}>0$, so that
\begin{equation*}
\int p_{n}(x)p_{m}(x)\,d\mu (x)=\delta _{mn},\quad m,n\geq 0.
\end{equation*}%
Define the ensemble of random orthogonal polynomials of the form
%
\begin{equation}\label{eq1.2}
G_{n}(x)=\sum _{j=0}^{n}a_{j}p_{j}(x),\quad n\geq 0,
\end{equation}%
where $\{a_{j}\}_{j=0}^{\infty }$ are standard Gaussian
$\mathcal{N}\left ( 0,1\right ) $ i.i.d. random variables. For any interval
$\left [ a,b\right ] \subset {\mathbb{R}}$, let
$N_{n}(\left [ a,b\right ] )$ (resp.
$N_{n}\left ( \mathbb{R}\right ) $) denote the number of zeros of
$G_{n}$ lying in $\left [ a,b\right ] $ (resp. total number of real zeros).

Real zeros of high degree random polynomials have been studied since the
1930s. The early work concentrated on the expected number of real zeros
${%
\mathbb{E}}[N_{n}({\mathbb{R}})]$ for
$P_{n}(x)=\sum _{k=0}^{n}a_{k}x^{k}$, where $\{a_{k}\}_{k=0}^{n}$ are i.i.d.
random variables. Bloch and P\'{o}lya \cite{BlochPolya1932} gave the upper
bound ${\mathbb{E}}[N_{n}({\mathbb{R}}%
)]=O(\sqrt{n})$ for polynomials with coefficients in $\{-1,0,1\}$. Improvements
and generalizations were obtained by Littlewood and Offord
\cite{LittlewoodOfford1938,LittlewoodOfford1939}, Erd\H{o}s and Offord
\cite{ErdosOfford1956} and others. Kac \cite{Kac1943} introduced the ``Kac-Rice
formula'' to establish the important asymptotic result
\begin{equation*}
{\mathbb{E}}[N_{n}({\mathbb{R}})]=(2/\pi +o(1))\log {n}\quad
\text{as }%
n\rightarrow \infty ,
\end{equation*}%
for polynomials with independent real Gaussian coefficients.

More precise forms of this asymptotic were obtained by Kac
\cite{Kac1948}, Edelman and Kostlan \cite{EdelmanKostlan1995}, Wilkins
\cite{Wilkins1988} and others. For related further directions, see
\cite{BharuchaReidSambandham1986} and \cite{Farahmand1998}. Maslova
\cite{Maslova1974} proved that the variance of real zeros for Kac polynomials
$%
\sum _{k=0}^{n}a_{k}z^{k}$ satisfies
\begin{equation*}
\text{\textup{Var}}[N_{n}({\mathbb{R}})]=\frac{4}{\pi }\left ( 1-
\frac{2}{\pi }\right ) \log {n}+o(\log {n})
\end{equation*}%
for i.i.d. coefficients with mean 0, variance 1 and
${\mathbb{P}}(a_{k}=0)=0$%
. This result was recently generalized by Nguyen and Vu
\cite{NguyenVu2020}.

Das \cite{Das1971} considered random Legendre polynomials corresponding
to Lebesgue measure $d\mu (x)=dx$ on $[-1,1]$, and found that
${\mathbb{E}}%
[N_{n}(\left [ -1,1\right ] )]$ is asymptotically equal to
$n/\sqrt{3}$. Wilkins \cite{Wilkins1997} estimated the error term in this
asymptotic relation. For random Jacobi polynomials, Das and Bhatt
\cite{DasBhatt1982} established that
${\mathbb{E}}[N_{n}(\left [ -1,1\right ] )]$ is asymptotically equal to
$n/\sqrt{3}$ too. Farahmand \cite{Farahmand1996},
\cite{Farahmand1998}, \cite{Farahmand2001} considered the expected number
of the level crossings of random sums of Legendre polynomials with coefficients
having different distributions. These results were generalized to wide
classes of random orthogonal polynomials by Lubinsky, Pritsker and Xie
\cite{Lubinskyetal2016} and \cite{Lubinskyetal2018}. In particular, they showed
that the first term in the asymptotics for
${\mathbb{E}}[N_{n}({\mathbb{R}}%
)] $ remains the same as for the Legendre case.

The asymptotic variance and the Gaussianity for real zeros of random trigonometric
polynomials were established by Granville and Wigman
\cite{GranvilleWigman2011}, and subsequently by Aza\"{\i}s and Le\'{o}n
\cite{AzaisLeon2013} via different methods. Su and Shao \cite{SuShao2012} found
the asymptotic variance for the real zeros of random cosine polynomials,
while Aza\"{\i}s, Dalmao and Le\'{o}n \cite{Azaisetal2016} gave a different
proof. Xie \cite{Xie2016} showed that the variance of real zeros for a
general class of random orthogonal polynomials is $o(n^{2})$. A recent
paper of Do, H. Nguyen and O. Nguyen \cite{Doetal2020} studied dependence
of the variance on the distribution of the i.i.d. random coefficients in
the trigonometric case.

In this paper our main goal is determining the asymptotic for the variance
of the number of real zeros for the ensemble of random orthogonal polynomials
of the form {(\ref{eq1.2})}. To state our results, we require the following definition:%

\begin{defn}\label{defn1.1}
We say that a measure is regular in the sense of Stahl, Totik, and
Ullman, if the leading coefficients
$\left \{  \gamma _{j}\right \}  $ of the orthonormal polynomials in {(\ref{eq1.1})} satisfy
\begin{equation*}
\lim _{j\rightarrow \infty }\gamma _{j}^{1/j}=
\frac{1}{cap\left ( \text{supp}%
\left [ \mu \right ] \right ) },
\end{equation*}%
where $cap\left ( \text{supp}\left [ \mu \right ] \right ) $
denotes the logarithmic capacity of $\mathrm{supp}\left [ \mu \right ] $.
\end{defn}

While not a transparent condition, it is a weak one. For example, if the
support of $\mu $ consists of finitely many intervals, and
$\mu ^{\prime }$ is positive a.e. in each of those intervals, then
$\mu $ is regular. However, much less is needed \cite{StahlTotik1992}.
We let $\nu $ denote the equilibrium measure $\nu $ for $\mathrm{supp}\left [
\mu \right ] $ in the sense of potential theory, and let
$\omega \left ( x\right ) =\frac{d\nu }{dx}$. In any open subinterval of
$\mathrm{supp}\left [ \mu \right ] $, $\omega $ exists, and is positive and continuous
\cite{StahlTotik1992}. For example, when $\mathrm{supp}\left [ \mu \right ] =
\left [ -1,1\right ] $,
\begin{equation*}
\omega \left ( x\right ) =\frac{1}{\pi \sqrt{1-x^{2}}}.
\end{equation*}

Let%
%
\begin{equation}\label{eq1.3}
S\left ( u\right ) =\frac{\sin \pi u}{\pi u};
\end{equation}%
%
\begin{equation}\label{eq1.4}
F\left ( u\right ) =\det \left [
\begin{array}{c@{\quad }c@{\quad }c@{\quad }c}
1 & S\left ( u\right ) & 0 & S^{\prime }\left ( u\right )
\\
S\left ( u\right ) & 1 & -S^{\prime }\left ( u\right ) & 0
\\
0 & -S^{\prime }\left ( u\right ) & -S^{\prime\prime }\left ( 0
\right ) & -S^{\prime\prime }\left ( u\right )
\\
S^{\prime }\left ( u\right ) & 0 & -S^{\prime\prime }\left ( u
\right ) & -S^{\prime\prime }\left ( 0\right )%
\end{array}%
\right ] ;
\end{equation}%
%
\begin{equation}\label{eq1.5}
G\left ( u\right ) =\det \left [
\begin{array}{c@{\quad }c@{\quad }c}
1 & S\left ( u\right ) & -S^{\prime }\left ( u\right )
\\
S\left ( u\right ) & 1 & 0
\\
-S^{\prime }\left ( u\right ) & 0 & -S^{\prime\prime }\left ( 0
\right )%
\end{array}%
\right ] ;
\end{equation}%
%
\begin{equation}\label{eq1.6}
H\left ( u\right ) =\det \left [
\begin{array}{c@{\quad }c@{\quad }c}
1 & S\left ( u\right ) & 0
\\
S\left ( u\right ) & 1 & -S^{\prime }\left ( u\right )
\\
S^{\prime }\left ( u\right ) & 0 & -S^{\prime\prime }\left ( u
\right )%
\end{array}%
\right ] .
\end{equation}%
Sylvester's determinant identity and the fact that
$G\left ( -u\right ) =G\left ( u\right ) $ show that
\begin{equation*}
\left ( 1-S\left ( u\right ) ^{2}\right ) F\left ( u\right ) =G\left (
u\right ) ^{2}-H\left ( u\right ) ^{2}.
\end{equation*}%
Also let%
%
\begin{equation}\label{eq1.7}
\Xi \left ( u\right ) =\frac{1}{\pi ^{2}}\left \{
\frac{\sqrt{F\left ( u\right ) }%
}{1-S\left ( u\right ) ^{2}}+
\frac{1}{\left ( 1-S\left ( u\right ) ^{2}\right )
^{3/2}}H\left ( u\right ) \arcsin \left (
\frac{H\left ( u\right ) }{G\left (
u\right ) }\right ) \right \}  -\frac{1}{3}
\end{equation}%
and
%
\begin{equation}\label{eq1.8}
c=\int _{-\infty }^{\infty }\Xi \left ( u\right ) du+
\frac{1}{\sqrt{3}}.
\end{equation}%
%
\begin{thm}\label{thm1.2}
Let $\mu $ be a measure with compact support on the real
line, that is regular in the sense of Stahl, Totik, and Ullmann. Let
$\omega $
denote the Radon-Nikodym derivative of the equilibrium measure for
the support of $\mu $. Let $\left [ a^{\prime },b^{\prime }
\right ] $ be a subinterval in the support of $\mu $,
such that $\mu $ is absolutely continuous there, and
its Radon-Nikodym derivative $\mu ^{\prime }$ is positive and
continuous there. Assume moreover, that
%
\begin{equation}\label{eq1.9}
\sup _{n\geq 1}\left \Vert p_{n}\right \Vert _{L_{\infty }\left [ a^{
\prime },b^{\prime }\right ] }<\infty .
\end{equation}%
If $\left [ a,b\right ] \subset \left ( a^{\prime },b^{
\prime }\right )$ then
%
\begin{equation}\label{eq1.10}
\lim _{n\rightarrow \infty }\frac{1}{n}\text{Var}\left [ N_{n}\left (
\left [ a,b%
\right ] \right ) \right ] =c\left ( \int _{a}^{b}\omega \left ( y
\right ) dy\right ) .
\end{equation}
\end{thm}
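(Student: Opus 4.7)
The plan is to apply the Kac--Rice two-point formula to the centered Gaussian process $G_n$, whose covariance kernel is the reproducing kernel
\[
K_n(x,y) = \sum_{j=0}^{n} p_j(x)\,p_j(y).
\]
Let $\rho_n^{(1)}$ and $\rho_n^{(2)}$ denote the one- and two-point intensities of real zeros of $G_n$. Then
\[
\text{Var}[N_n([a,b])] = \mathbb{E}[N_n([a,b])] + \iint_{[a,b]^2} \bigl(\rho_n^{(2)}(x,y) - \rho_n^{(1)}(x)\,\rho_n^{(1)}(y)\bigr)\,dx\,dy.
\]
Both intensities are explicit functionals of the $4 \times 4$ covariance matrix $\Sigma_n(x,y)$ of $(G_n(x),G_n(y),G_n'(x),G_n'(y))$, whose entries are mixed partial derivatives of $K_n$ at $(x,y)$. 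For the two-point intensity one uses the classical Gaussian moment identity $\mathbb{E}|UV| = \tfrac{2}{\pi}(\sqrt{\det\Lambda} + \Lambda_{12}\arcsin(\Lambda_{12}/\sqrt{\Lambda_{11}\Lambda_{22}}))$, applied to the conditional covariance $\Lambda = \text{Cov}((G_n'(x),G_n'(y))\mid G_n(x)=G_n(y)=0)$, which is computed from $\Sigma_n$ by the Schur complement.

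Next, fix $x_0 \in [a,b]$ and rescale the second variable by $y = x_0 + u/(n\omega(x_0))$. Bulk sine-kernel universality for the reproducing kernel, available under regularity of $\mu$ and hypothesis \eqref{eq1.9}, yields
\[
\frac{K_n(x_0 + u/(n\omega(x_0)),\,x_0 + v/(n\omega(x_0)))}{K_n(x_0,x_0)} \longrightarrow S(u-v)
\]
together with the analogous scaling limits for mixed derivatives (each differentiation contributing a factor $1/(n\omega(x_0))$). Consequently the suitably normalized $\Sigma_n$ converges entrywise to the $4\times 4$ matrix in \eqref{eq1.4}, whose determinant is $F(u)$. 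A short Schur-complement calculation then shows that the conditional determinant equals $F(u)/(1-S(u)^2)$, that the two (equal) conditional diagonals equal $G(u)/(1-S(u)^2)$, and that the conditional off-diagonal equals $H(u)/(1-S(u)^2)$; the Sylvester identity $G^2 - H^2 = F(1-S^2)$ noted in the paper reconfirms this. Substituting into the Gaussian moment identity and then into the Kac--Rice formula for $\rho_n^{(2)}$ recovers the expression in braces in \eqref{eq1.7} as the pointwise limit of $\rho_n^{(2)}(x_0,y)/(n\omega(x_0))^2$. Subtracting the squared one-point limit $\rho_n^{(1)}(x_0)/(n\omega(x_0)) \to 1/\sqrt{3}$ produces the constant $-1/3$ and hence $\Xi(u)$; combined with the Jacobian $du/(n\omega(x_0))$ and the factor $1/n$ in front of the variance, the rescaled integrand converges pointwise to $\omega(x_0)\,\Xi(u)\,du$.

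The main obstacle is the passage to the limit inside the double integral, i.e.\ producing an $L^1(\mathbb{R})$ dominant in $u$ uniform in $n$ and $x_0 \in [a,b]$. Three regimes must be treated separately. Near $u = 0$ the apparent singularity from $1 - S(u)^2$ vanishing must cancel against zeros of $F$, $G$, and $H$ of the correct order; this follows from the fact that the Kac--Rice integrand extends smoothly across the diagonal (zeros of $G_n$ are almost surely simple). In the intermediate regime $|u| = O(1)$ the sine-kernel limit itself is enough, once made uniform in $x_0$ via continuity of $\omega$ and $\mu'$ on $[a',b']$. The most delicate regime is $|u|$ large, where off-diagonal decay of $K_n$ and its derivatives is required; here the hypothesis \eqref{eq1.9} together with positivity and continuity of $\mu'$ give Christoffel-function bounds which force the standard $|K_n(x,y)| = O(1/|x-y|)$ decay away from the diagonal, making $\rho_n^{(2)}(x,y) - \rho_n^{(1)}(x)\rho_n^{(1)}(y) = o(n^2\omega^2)$ at a rate summable in $u$. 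The buffer $[a,b] \subset (a',b')$ is precisely what renders these bounds uniform in $x_0$.

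Finally, combining dominated convergence on the correlation integral, which gives the factor $\bigl(\int_{-\infty}^{\infty}\Xi(u)\,du\bigr)\bigl(\int_a^b \omega\bigr)$, with the known asymptotic $\mathbb{E}[N_n([a,b])]/n \to (1/\sqrt{3})\int_a^b \omega$ from \cite{Lubinskyetal2016,Lubinskyetal2018} (which supplies the additional $1/\sqrt{3}$ in $c$), one arrives at $c\int_a^b \omega(y)\,dy$, which is exactly \eqref{eq1.10}.
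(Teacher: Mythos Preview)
Your overall architecture matches the paper's: Kac--Rice variance formula, sine-kernel universality for the rescaled reproducing kernel, identification of $\Xi(u)$ as the local limit, and a three-regime split in $u$ (near zero, bounded, large) to justify passing to the limit. The paper carries this out via Lemmas~2.3--2.5 and a two-parameter ($\Lambda,\eta$) truncation rather than a single dominated-convergence step, but that is a packaging difference, not a mathematical one.

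The genuine gap is in the large-$|u|$ (tail) regime. You assert that the off-diagonal decay $|K_n(x,y)|=O(1/|x-y|)$ forces $\rho_n^{(2)}(x,y)-\rho_n^{(1)}(x)\rho_n^{(1)}(y)$ to decay at a summable rate in $u$. This does not follow from size estimates alone: for $|x-y|$ of order~$1$ one has $\rho_n^{(2)}(x,y)\sim n^{2}$ and $\rho_n^{(1)}(x)\rho_n^{(1)}(y)\sim n^{2}$ separately, so without exhibiting a cancellation the difference is merely $O(n^{2})$, which after rescaling is $O(1)$ in $u$ and not integrable over $|u|\in[\Lambda,\,n\omega(b-a)]$. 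The paper's Lemma~4.1 supplies the missing mechanism: writing $T_1$ with numerator $\det\Sigma-\Psi(x)\Psi(y)$ and applying Laplace's determinant expansion along rows $1,3$ of $\Sigma$ so that the product $\Psi(x)\Psi(y)$ is exactly one of the six Laplace terms and cancels. Each of the five surviving $2\times2\cdot2\times2$ products then contains at least two off-diagonal entries, yielding $\mathrm{Num}=O\bigl(n^{6}/(|x-y|+1/n)^{2}\bigr)$ and hence $|\rho_n^{(2)}-\rho_n^{(1)}\rho_n^{(1)}|\le C/|x-y|^{2}$. You have not identified this cancellation, and it is the heart of the tail estimate.

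A second, smaller gap is near $u=0$. Saying that the singularity cancels because ``zeros of $G_n$ are almost surely simple'' is a heuristic, not a uniform-in-$n$ bound. The paper needs the quantitative statement that $(\Omega_{12}\Delta)(x,x+u/(n\omega))$ has a zero of multiplicity at least $3$ at $u=0$ (Lemma~5.3), obtained by explicit row/column operations and Taylor expansion, together with the positive-semidefiniteness of $\Sigma$ forcing $\det\Sigma$ to vanish to order $\ge4$ (Lemma~5.2). These give the uniform bound $|\rho_2-\rho_1\rho_1|\le Cn^{2}$ on $|u|\le\eta$, which is what makes the contribution of the $K$-region $O(\eta)$ uniformly in $n$.
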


Note that the limit does not depend on the particular measure $\mu $, but
involves the equilibrium density of the support of $\mu $. The bounds for
the orthonormal polynomials are known for example when
$\mu ^{\prime }$ satisfies a Dini-Lipschitz condition. Therefore an application
of {Theorem~\ref{thm1.2}} gives:%

\begin{cor}\label{cor1.3}
Let $\mu $ be a measure supported on
$\left [ -1,1 \right ] $
satisfying the Szeg\H{o} condition
\begin{equation*}
\int _{-1}^{1}\log \mu ^{\prime }\left ( x\right )
\frac{dx}{\pi \sqrt{1-x^{2}}}%
>-\infty .
\end{equation*}%
Let $\left [ a^{\prime },b^{\prime }\right ] $ be a subinterval
of $\left ( -1,1\right ) $, in which $\mu $ is absolutely
continuous, while $\mu ^{\prime }$ is positive and continuous
in $\left [ a^{\prime },b^{\prime }\right ] $. Assume moreover
that its local modulus of continuity,
\begin{equation*}
\Omega \left ( t\right ) =\sup \left \{  \left \vert \mu ^{\prime }
\left ( x\right ) -\mu ^{\prime }\left ( y\right ) \right \vert :x,y
\in \left [ a^{\prime },b^{\prime }\right ] \text{ and }\left \vert x-y
\right \vert \leq t\right \}  ,%
\text{ }t>0,
\end{equation*}%
satisfies the Dini-Lipshitz condition
\begin{equation*}
\int _{0}^{1}\frac{\Omega \left ( t\right ) }{t}dt<\infty .
\end{equation*}%
If $\left [ a,b\right ] \subset \left ( a^{\prime },b^{
\prime }\right ) $, then
%
\begin{equation}\label{eq1.11}
\lim _{n\rightarrow \infty }\frac{1}{n}\text{Var}\left [ N_{n}\left (
\left [ a,b%
\right ] \right ) \right ] =c\left ( \int _{a}^{b}
\frac{1}{\pi \sqrt{1-y^{2}}}%
dy\right ) .
\end{equation}%
\end{cor}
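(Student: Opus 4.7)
The strategy is to deduce Corollary~\ref{cor1.3} as a direct consequence of Theorem~\ref{thm1.2} by checking, one by one, that the hypotheses of that theorem are met under the stronger (but more classical) assumptions of the corollary. There are essentially four items to verify: (i) $\mu$ is regular in the sense of Stahl, Totik, and Ullman; (ii) the equilibrium density $\omega$ for $\mathrm{supp}[\mu] = [-1,1]$ equals $\frac{1}{\pi\sqrt{1-y^2}}$; (iii) $\mu$ is absolutely continuous on $[a',b']$ with $\mu'$ positive and continuous there; and (iv) the uniform bound \eqref{eq1.9}. Items (ii) and (iii) are immediate: the equilibrium measure of $[-1,1]$ is the classical arcsine distribution, and (iii) is assumed outright in the statement.

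For (i), the Szeg\H{o} condition is well-known to imply regularity. Indeed, Szeg\H{o}'s theorem gives $\gamma_n/2^n \to c_\mu > 0$ for an explicit constant $c_\mu$, and in particular $\gamma_n^{1/n} \to 2 = 1/\mathrm{cap}([-1,1])$, which is exactly Definition~\ref{defn1.1}. Thus (i) holds, and substituting $\omega(y) = \frac{1}{\pi\sqrt{1-y^2}}$ into the conclusion \eqref{eq1.10} of Theorem~\ref{thm1.2} yields precisely \eqref{eq1.11}.

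The main work therefore consists of verifying \eqref{eq1.9}: the uniform boundedness
\begin{equation*}
\sup_{n \geq 1} \|p_n\|_{L_\infty[a',b']} < \infty.
\end{equation*}
This is the step where the Dini--Lipschitz assumption is actually used. The plan is to invoke a classical uniform asymptotic for orthonormal polynomials on an interior subinterval under local smoothness of $\mu'$. Specifically, a theorem of Badkov (generalizing work of Bernstein, Szeg\H{o}, and Freud) asserts that if $\mu$ satisfies the global Szeg\H{o} condition and if, on some subinterval $[a',b'] \subset (-1,1)$, $\mu'$ is positive, continuous, and its local modulus of continuity satisfies the Dini--Lipschitz condition $\int_0^1 \Omega(t)/t\,dt < \infty$, then
\begin{equation*}
p_n(\cos\theta) = \sqrt{\tfrac{2}{\pi \sin\theta\, \mu'(\cos\theta)}}\cos\bigl(n\theta + \eta(\theta)\bigr) + o(1)
\end{equation*}
uniformly for $\cos\theta \in [a',b']$. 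Since the right-hand side is a bounded function of $\theta$ (as $\sin\theta$ is bounded away from $0$ and $\mu'$ is bounded away from $0$ on $[a',b']$), this at once yields \eqref{eq1.9}.

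The main obstacle is locating, and correctly quoting, the exact form of this uniform asymptotic under a \emph{local} Dini--Lipschitz hypothesis (rather than a global one). The cleanest route is to cite Badkov's theorem directly; alternatively one can derive only the weaker qualitative bound \eqref{eq1.9} using Szeg\H{o}-type arguments based on the Christoffel function estimate $\lambda_n(x) \asymp \frac{1}{n}\sqrt{1-x^2}\,\mu'(x)$ valid on $[a',b']$ under the Dini--Lipschitz hypothesis, combined with the standard inequality $p_n(x)^2 \lambda_n(x) \leq C$. Either way, once \eqref{eq1.9} is established, the corollary follows by direct application of Theorem~\ref{thm1.2} and the identification $\omega(y) = \frac{1}{\pi\sqrt{1-y^2}}$.
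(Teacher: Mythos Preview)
Your proposal is correct and follows essentially the same route as the paper: verify regularity from the Szeg\H{o} condition, invoke Badkov's uniform asymptotic for $p_n$ under the local Dini--Lipschitz hypothesis to obtain \eqref{eq1.9}, and then apply Theorem~\ref{thm1.2} with $\omega(y)=\frac{1}{\pi\sqrt{1-y^{2}}}$. The only cosmetic difference is that the paper deduces regularity via the fact that $\mu'>0$ a.e.\ (Stahl--Totik), whereas you use Szeg\H{o}'s asymptotic $\gamma_n^{1/n}\to 2$ directly; both are standard.
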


\begin{rems}
\begin{enumerate}[(c)]
\item[(a)] We believe that this result is new even for the Legendre weight
$\mu ^{\prime }=1$.%

\item[(b)] The hypotheses of {Theorem~\ref{thm1.2}} are also satisfied for exponential weights
investigated in \cite{LevinLubinsky2001} that do not satisfy the Szeg\H{o}
condition. For example, the conclusion of {Theorem~\ref{thm1.2}} holds for any
$\left [ a,b\right ] \subset \left ( -1,1\right ) $, when
\begin{equation*}
\mu ^{\prime }\left ( x\right ) =\exp \left ( -\exp _{k}\left ( 1-x^{2}
\right ) ^{-\alpha }\right ) \text{, }x\in \left ( -1,1\right ),
\end{equation*}%
where $\alpha >0$ and
$\exp _{k}=\exp \left ( \exp \left ( ...\exp (\right ) )\right ) $ denotes
the $k$th iterated exponential.%

\item[(c)] For a class of weights supported on several disjoint intervals, in
a classic paper, Widom \cite{Widom1969} established asymptotics of the
orthonormal polynomials under some smoothness conditions on the weight.
These imply the uniform boundedness of the orthonormal polynomials in subintervals
of the interior of the support, so that {Theorem~\ref{thm1.2}} applies to Widom's
weights.%

\item[(d)] As noted above, the analogous limit for trigonometric polynomials was
established by Granville and Wigman in \cite{GranvilleWigman2011}. We have
indications that our results are related to those of
\cite{GranvilleWigman2011} via the same limiting Paley-Wiener process.%

\item[(e)] Aza\"{\i}s, Dalmao and Le\'{o}n \cite[Theorem 1]{Azaisetal2016} found
the asymptotics for the variance of zeros of random cosine polynomials
$%
\sum _{k=0}^{n}a_{k}\cos ky$ on $\left [ 0,\pi \right ] $. These random
cosine polynomials are equivalent to the random Chebyshev polynomials
$%
\sum _{k=0}^{n}a_{k}T_{k}\left ( x\right ) $ on
$\left [ -1,1\right ] $ by the change of variable $y=\arccos x$. Our asymptotic
variance result of {Theorem~\ref{thm1.2}} for the random Chebyshev polynomials agrees
with that of \cite[Theorem 1]{Azaisetal2016} for random cosine polynomials.%
\end{enumerate}
\end{rems}

This paper is organized as follows: in Section~\ref{sec2}, we state the Kac-Rice
formula for the variance, and prove {Theorem~\ref{thm1.2}} and {Corollary~\ref{cor1.3}}, deferring
technical details to later sections. In Section~\ref{sec3}, we record some technical
estimates and gather results from elsewhere. In Section~\ref{sec4}, we estimate
the ``tail term'' with
$\left \vert x-y\right \vert \geq \frac{\Lambda }{n}$ in the integral defining
the main term in the variance. In Section~\ref{sec5}, we handle the ``central term''
where $x$ and $y$ are close, which gives the dominant contribution to the
integral. In Section~\ref{sec6}, the appendix, we prove the formula for the variance.

In the sequel, $C,C_{1},C_{2},...$ denote constants independent of
$n,x,y$. The same symbol may be different in different occurrences.

\medskip

\noindent \textbf{Acknowledgments}

\medskip

The authors would like to acknowledge the input of Igor Wigman of King's College London.
He provided essential insight into the literature and ideas
for this paper. The authors would also like to thank a referee for finding
an error in the statement of {Lemma~\ref{lem3.2}}.

\section{The proofs of {Theorem~\ref{thm1.2}} and {Corollary~\ref{cor1.3}}}
\label{sec2}

We begin with the Kac-Rice formulas for the expectation and variance. These
involve the reproducing kernel%
%
\begin{equation}\label{eq2.1}
K_{n}\left ( x,y\right ) =\sum _{j=0}^{n-1}p_{j}\left ( x\right ) p_{j}
\left ( y\right )
\end{equation}%
and for nonnegative integers $r,s$, its derivatives%
%
\begin{equation}\label{eq2.2}
K_{n}^{\left ( r,s\right ) }\left ( x,y\right ) =\sum _{j=0}^{n-1}p_{j}^{
\left ( r\right ) }\left ( x\right ) p_{j}^{\left ( s\right ) }\left (
y\right ) .
\end{equation}%
%
\begin{lem}\label{lem2.1}
Let $\left [ a,b\right ] \subset \mathbb{R}$, and let $G_{n}$
be defined by {(\ref{eq1.2})}. Then the expected number of real zeros for
$G_{n}$ is expressed by
%
\begin{equation}\label{eq2.3}
{\mathbb{E}}\left [ N_{n}\left ( \left [ a,b\right ] \right ) \right ]
=\frac{1}{%
\pi }\int _{a}^{b}\rho _{1}\left ( x\right ) \text{ }dx,
\end{equation}%
where%
%
\begin{equation}\label{eq2.4}
\rho _{1}\left ( x\right ) =\frac{1}{\pi }\sqrt{
\frac{K_{n+1}^{(1,1)}\left (
x,x\right ) }{K_{n+1}\left ( x,x\right ) }-\left (
\frac{K_{n+1}^{(0,1)}\left (
x,x\right ) }{K_{n+1}\left ( x,x\right ) }\right ) ^{2}}.
\end{equation}
\end{lem}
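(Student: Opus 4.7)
My plan is to apply the Kac-Rice formula for smooth centered Gaussian processes. The first step is to identify the second-order structure of $G_n$. Since the $a_j$ are i.i.d.\ standard Gaussians, $G_n$ is a centered Gaussian process with covariance
$$\mathbb{E}\bigl[G_n(x)G_n(y)\bigr] = \sum_{j=0}^{n} p_j(x)p_j(y) = K_{n+1}(x,y),$$
and analogously $\mathbb{E}\bigl[G_n(x)G_n'(y)\bigr] = K_{n+1}^{(0,1)}(x,y)$ and $\mathbb{E}\bigl[G_n'(x)G_n'(y)\bigr] = K_{n+1}^{(1,1)}(x,y)$. Setting $y=x$ yields the three entries of the covariance matrix of the Gaussian vector $\bigl(G_n(x), G_n'(x)\bigr)$.

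The second step is to invoke Kac-Rice in the form
$$\mathbb{E}\bigl[N_n([a,b])\bigr] = \int_a^b f_{G_n(x)}(0)\, \mathbb{E}\bigl[\,|G_n'(x)|\,\big|\,G_n(x)=0\bigr] \,dx,$$
valid provided $\bigl(G_n(x), G_n'(x)\bigr)$ has a nondegenerate distribution for each $x\in[a,b]$ and $\mathbb{P}(G_n(a)=0)=\mathbb{P}(G_n(b)=0)=0$. Since $(G_n(x),G_n'(x))$ is jointly centered Gaussian, conditional on $G_n(x)=0$ the variable $G_n'(x)$ is centered Gaussian with variance
$$\sigma(x)^2 = K_{n+1}^{(1,1)}(x,x) - \frac{K_{n+1}^{(0,1)}(x,x)^2}{K_{n+1}(x,x)}.$$
Combining $\mathbb{E}|Z|=\sigma(x)\sqrt{2/\pi}$ with the marginal density $f_{G_n(x)}(0)=(2\pi K_{n+1}(x,x))^{-1/2}$ produces the integrand
$$\frac{1}{\pi}\sqrt{\frac{K_{n+1}^{(1,1)}(x,x)}{K_{n+1}(x,x)} - \left(\frac{K_{n+1}^{(0,1)}(x,x)}{K_{n+1}(x,x)}\right)^{\!2}},$$
which matches the expression for $\rho_1(x)$ in (\ref{eq2.4}).

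The nondegeneracy condition reduces to the linear independence of the functionals $P\mapsto P(x)$ and $P\mapsto P'(x)$ on polynomials of degree $\le n$, which is verified by testing on $P\equiv 1$ and $P(t)=t-x$; it therefore holds for every $n\ge 1$. The boundary condition is immediate because $G_n(a)$ and $G_n(b)$ are nondegenerate Gaussians. I expect the main technical point to be the rigorous justification of the Kac-Rice formula itself. Since $G_n$ is a polynomial, its sample paths are $C^{\infty}$ with only finitely many zeros, so the formula may be quoted from standard references (Cram\'er-Leadbetter, Adler-Taylor); alternatively it can be derived on the spot from the identity
$$N_n([a,b]) = \lim_{\epsilon \downarrow 0} \frac{1}{2\epsilon} \int_a^b \mathbf{1}_{\{|G_n(x)| < \epsilon\}}\, |G_n'(x)|\, dx,$$
followed by Fubini and dominated convergence, using the uniform integrability provided by the Gaussian tails of $G_n(x)$ and $G_n'(x)$.
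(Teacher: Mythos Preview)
Your derivation is correct and is exactly the standard Kac--Rice argument that the paper defers to the reference \cite{Lubinskyetal2016}; the paper's own proof is simply that citation, so there is nothing further to compare. Note only that your computation yields $\mathbb{E}[N_n([a,b])]=\int_a^b \rho_1(x)\,dx$ (consistent with the Appendix, Section~\ref{sec6}), so the extra factor $1/\pi$ in front of the integral in (\ref{eq2.3}) appears to be a typo in the paper rather than a discrepancy in your argument.
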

\begin{proof}
See \cite{Lubinskyetal2016}.
\end{proof}

We note that $\rho _{1}$ depends on $n$, but we omit this dependence to
simplify the notation. The same applies to $\rho _{2}$ below. The variance
of real zeros of $G_{n}$ is found from the following formula, which was
derived in \cite{Xie2016} by using the method of
\cite{GranvilleWigman2011}.%

\begin{lem}\label{lem2.2}
Let $\left [ a,b\right ] \subset \mathbb{R}$, and let $G_{n}$
be defined by {(\ref{eq1.2})}. %
%
\begin{equation}\label{eq2.5}
\mathrm{Var}\left [ N_{n}\left ( \left [ a,b\right ] \right ) \right ]
=\int _{a}^{b}\int _{a}^{b}\left \{  \rho _{2}\left ( x,y\right ) -
\rho _{1}\left ( x\right ) \rho _{1}\left ( y\right ) \right \}  dxdy+
\int _{a}^{b}\rho _{1}\left ( x\right ) dx,
\end{equation}%
where
%
\begin{equation}\label{eq2.6}
\rho _{2}(x,y)=\frac{1}{\pi ^{2}\sqrt{\Delta }}\left ( \sqrt{\Omega _{11}
\Omega _{22}-\Omega _{12}^{2}}+\Omega _{12}\arcsin \left (
\frac{\Omega _{12}}{\sqrt{\Omega _{11}\Omega _{22}}}\right ) \right ) .
\end{equation}%
Here%
%
\begin{equation}\label{eq2.7}
\Delta (x,y):=K_{n+1}(x,x)K_{n+1}(y,y)-K_{n+1}^{2}(x,y)
\end{equation}%
and $\Omega $ is the covariance matrix of the random vector
$(P_{n}^{\prime }(x),P_{n}^{\prime }(y))$ conditional upon
$P_{n}(x)=P_{n}(y)=0$:
\begin{equation*}
\Omega =%
\begin{bmatrix}
\Omega _{11} & \Omega _{12}
\\
\Omega _{12} & \Omega _{22}%
\end{bmatrix}%
,
\end{equation*}%
with
%
\begin{align}
\label{eq2.8}
\Omega _{11}(x,y) &:= K_{n+1}^{(1,1)}(x,x)-
\nonumber
\\
 &\frac{1}{\Delta }\left ( K_{n+1}(y,y)(K_{n+1}^{(0,1)}(x,x))^{2}-2K_{n+1}(x,y)K_{n+1}^{(0,1)}(x,x)K_{n+1}^{(0,1)}(y,x)+K_{n+1}(x,x)(K_{n+1}^{(0,1)}(y,x))^{2}
\right ) ,
\end{align}
%
\begin{align}
\label{eq2.9}
\Omega _{22}(x,y) &:= K_{n+1}^{(1,1)}(y,y)-
\\
 &\frac{1}{\Delta }\left ( K_{n+1}(y,y)(K_{n+1}^{(0,1)}(x,y))^{2}-2K_{n+1}(x,y)K_{n+1}^{(0,1)}(x,y)K_{n+1}^{(0,1)}(y,y)+K_{n+1}(x,x)(K_{n+1}^{(0,1)}(y,y))^{2}
\right ) ,
\nonumber
\end{align}%
\begin{align}
\label{eq2.10}
\Omega _{12}(x,y) &:= K_{n+1}^{(1,1)}(x,y)-
\\
 &\frac{1}{\Delta }%
[K_{n+1}(y,y)K_{n+1}^{(0,1)}(x,x)K_{n+1}^{(0,1)}(x,y)-K_{n+1}(x,y)K_{n+1}^{(0,1)}(x,y)K_{n+1}^{(0,1)}(y,x)
\nonumber
\\
& -K_{n+1}(x,y)K_{n+1}^{(0,1)}(x,x)K_{n+1}^{(0,1)}(y,y)+K_{n+1}(x,x)K_{n+1}^{(0,1)}(y,x)K_{n+1}^{(0,1)}(y,y)].
\nonumber
\end{align}
\end{lem}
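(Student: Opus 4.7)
The plan is to derive the variance formula by first writing the number of zeros via a Kac--Rice integral representation. Formally, $N_n([a,b]) = \int_a^b \delta(G_n(x))|G_n'(x)|\,dx$, which is made rigorous by approximating $\delta$ by bump functions and using the a.s.\ simplicity of zeros for the non-degenerate Gaussian process $G_n$. Squaring and taking expectation gives $\mathbb{E}[N_n([a,b])^2] = \int_a^b\int_a^b\rho_2(x,y)\,dx\,dy + \int_a^b \rho_1(x)\,dx$, where the second term arises from the diagonal contribution of $\delta(G_n(x))^2$ after regularization (this is the standard Kac--Rice identity $\mathbb{E}[N(N-1)] + \mathbb{E}[N] = \mathbb{E}[N^2]$). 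Subtracting $\mathbb{E}[N_n]^2 = \int\int \rho_1(x)\rho_1(y)\,dx\,dy$ yields \eqref{eq2.5}, so it suffices to identify $\rho_2(x,y)$ with the expression in \eqref{eq2.6}.

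For the centered Gaussian field $G_n$, the two-point correlation factorizes as
\[
\rho_2(x,y) = \mathbb{E}\bigl[|G_n'(x)||G_n'(y)| \bigm| G_n(x)=G_n(y)=0\bigr]\cdot p_{G_n(x),G_n(y)}(0,0).
\]
From orthonormality of $\{p_j\}$ and the i.i.d.\ $\mathcal{N}(0,1)$ hypothesis on $\{a_j\}$, the covariances of $G_n$ and its derivatives are given directly by the reproducing kernel and its partial derivatives: $\mathbb{E}[G_n^{(r)}(x)G_n^{(s)}(y)] = K_{n+1}^{(r,s)}(x,y)$. Consequently the covariance matrix of $(G_n(x),G_n(y))$ has determinant $\Delta(x,y)$, so the joint density at the origin equals $1/(2\pi\sqrt{\Delta(x,y)})$, producing the prefactor in \eqref{eq2.6}.

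The conditional distribution of $(G_n'(x),G_n'(y))$ given $G_n(x)=G_n(y)=0$ is again centered Gaussian, with covariance matrix $\Omega = A - B C^{-1} B^{T}$ (Schur complement), where $A$ is the unconditional covariance of the derivatives (entries $K_{n+1}^{(1,1)}$), $C$ is the covariance of $(G_n(x),G_n(y))$ (entries involving $K_{n+1}$), and $B$ is the cross-covariance (entries $K_{n+1}^{(0,1)}$). Carrying out this $2\times 2$ matrix algebra with $C^{-1} = \frac{1}{\Delta}\bigl(\begin{smallmatrix}K_{n+1}(y,y)&-K_{n+1}(x,y)\\-K_{n+1}(x,y)&K_{n+1}(x,x)\end{smallmatrix}\bigr)$ reproduces \eqref{eq2.8}--\eqref{eq2.10} after a routine expansion.

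To close, one applies the classical formula for the expected product of absolute values of centered bivariate Gaussians: if $(U,V)$ has covariance $\bigl(\begin{smallmatrix}\Omega_{11}&\Omega_{12}\\\Omega_{12}&\Omega_{22}\end{smallmatrix}\bigr)$, then
\[
\mathbb{E}[|U||V|] = \frac{2}{\pi}\Bigl(\sqrt{\Omega_{11}\Omega_{22}-\Omega_{12}^2} + \Omega_{12}\arcsin\bigl(\Omega_{12}/\sqrt{\Omega_{11}\Omega_{22}}\bigr)\Bigr),
\]
obtained by rotating to principal axes and computing the integral in polar coordinates. Combining this with the density factor $1/(2\pi\sqrt{\Delta})$ gives \eqref{eq2.6} exactly. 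The main technical obstacle is justifying the distributional Kac--Rice manipulation: one must verify non-degeneracy of the $4\times 4$ Gaussian covariance of $(G_n(x),G_n(y),G_n'(x),G_n'(y))$ for $x\neq y$ in a subinterval where $\mu'>0$ (so that the conditional density exists), and establish uniform integrability to pass the $\delta$-approximation through the double expectation. Given Lemma~\ref{lem2.1} and the Gaussianity of $G_n$, both of these reduce to standard facts about Gaussian processes with non-degenerate covariance, and the rest is explicit computation.
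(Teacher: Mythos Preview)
Your proposal is correct and follows essentially the same route as the paper: both use the factorial-moment identity $\mathrm{Var}[N]=\mathbb{E}[N(N-1)]-\mathbb{E}[N]^2+\mathbb{E}[N]$, identify the joint law of $(G_n(x),G_n(y),G_n'(x),G_n'(y))$ as Gaussian with covariance built from $K_{n+1}^{(r,s)}$, compute the conditional covariance $\Omega$ as a Schur complement, and evaluate $\mathbb{E}[|U||V|]$ for a bivariate Gaussian to obtain \eqref{eq2.6}. One small correction: the non-degeneracy of the $4\times 4$ covariance for $x\neq y$ does not require $\mu'>0$ on a subinterval; the paper proves it for any $x\neq y$ and $n\geq 3$ by testing the relation $v_1 Q(x)+v_2 Q(y)+v_3 Q'(x)+v_4 Q'(y)=0$ on the polynomials $(t-x)(t-y)^2$, $(t-x)^2(t-y)$, $t-y$, $t-x$, which forces $\vec v=0$.
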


\begin{proof}
See the Appendix. It is also shown there that the matrix $\Omega $ is nonnegative
definite, so that the square root defining $\rho _{2}$ is well defined.
\end{proof}

To prove {Theorem~\ref{thm1.2}}, we split the first integral in {(\ref{eq2.5})} into a central
term that provides the main contribution, and a tail term: for some large
enough $\Lambda $, write
\begin{eqnarray*}
&&\int _{a}^{b}\int _{a}^{b}\left \{  \rho _{2}\left ( x,y\right ) -
\rho _{1}\left ( x\right ) \rho _{1}\left ( y\right ) \right \}  dx
\text{ }dy
\\
&=&\left [ \iint _{\left \{  \left ( x,y\right ) :x,y\in \left [ a,b
\right ] ,\left \vert x-y\right \vert \geq \Lambda /n\right \}  }+
\iint _{\left \{  \left ( x,y\right ) :x,y\in \left [ a,b\right ] ,
\left \vert x-y\right \vert <\Lambda /n\right \}  }\right ] \left \{
\rho _{2}\left ( x,y\right ) -\rho _{1}\left ( x\right ) \rho _{1}
\left ( y\right ) \right \}  dx\text{ }dy
\\
&=&\text{Tail }+\text{Central.}
\end{eqnarray*}%
We handle the tail term by proving the following estimate and a simple
consequence:

\begin{lem}\label{lem2.3}
\begin{enumerate}[(a)]
\item[\emph{(a)}] There exist $C_{1},n_{0}$, and $\Lambda _{0}$ such that for $n\geq n_{0}$ and
$\left \vert x-y\right \vert \geq \frac{\Lambda _{0}}{n}$,%
%
\begin{equation}\label{eq2.11}
\left \vert \rho _{2}\left ( x,y\right ) -\rho _{1}\left ( x\right )
\rho _{1}\left ( y\right ) \right \vert \leq
\frac{C_{1}}{\left \vert x-y\right \vert ^{2}}.
\end{equation}%
\item[\emph{(b)}] There exist $C_{2},n_{0}$, and $\Lambda _{0}$ such that for $n\geq n_{0}$ and
$\Lambda \geq \Lambda _{0}$,%
%
\begin{equation}\label{eq2.12}
\iint _{\left \{  \left ( x,y\right ) :x,y\in \left [ a,b\right ] ,
\left \vert x-y\right \vert \geq \Lambda /n\right \}  }\left \vert \rho _{2}\left ( x,y\right ) -\rho _{1}\left ( x\right ) \rho _{1}
\left ( y\right ) \right \vert dx%
\text{ }dy\leq C_{2}\frac{n}{\Lambda }.
\end{equation}
\end{enumerate}
\end{lem}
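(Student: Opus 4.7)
The plan is to prove (a) by direct algebraic analysis of formula (\ref{eq2.6}), using the Christoffel--Darboux representation of $K_{n+1}$ together with Markov--Bernstein bounds for derivatives; part (b) will then follow from (a) by integration.

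The starting point is the Christoffel--Darboux identity together with the hypothesis (\ref{eq1.9}), the regularity of $\mu$ (which controls $\gamma_n/\gamma_{n+1}$), and Markov--Bernstein bounds for $\|p_j'\|_{L^\infty[a,b]}$. Differentiating the identity yields off-diagonal estimates
$$|K_{n+1}^{(r,s)}(x,y)|\le \frac{C\,n^{r+s}}{|x-y|}\qquad(r,s\in\{0,1\})$$
for $x,y\in[a,b]$ and $|x-y|\ge\Lambda_0/n$, provided $\Lambda_0$ is chosen sufficiently large (so that $1/|x-y|^2\le (n/\Lambda_0)/|x-y|$ absorbs the secondary term). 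Standard Christoffel function asymptotics applicable under the hypotheses of Theorem \ref{thm1.2} give $K_{n+1}(x,x)\asymp n$, $K_{n+1}^{(1,1)}(x,x)\asymp n^3$, and $|K_{n+1}^{(0,1)}(x,x)|\lesssim n^2$, and hence $\rho_1(x)\asymp n$ on $[a,b]$. In particular the normalized kernel $K_{n+1}(x,y)/\sqrt{K_{n+1}(x,x)K_{n+1}(y,y)}$ is $O(1/(n|x-y|))=O(1/\Lambda_0)$.

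Next I would factor $\sqrt{K_{n+1}(x,x)K_{n+1}(y,y)}$ out of the numerators and denominators of (\ref{eq2.7})--(\ref{eq2.10}) and inspect each cross-term. Writing $(z_1,z_2)=(x,y)$, this yields
$$\Delta=K_{n+1}(x,x)K_{n+1}(y,y)(1+\varepsilon_0),\qquad \Omega_{ii}=\pi^2\rho_1(z_i)^2 K_{n+1}(z_i,z_i)(1+\varepsilon_i),$$
with $|\varepsilon_j|=O(1/(n|x-y|)^2)$, together with $|\Omega_{12}|\le C\sqrt{\Omega_{11}\Omega_{22}}/(n|x-y|)$. Setting $t:=\Omega_{12}/\sqrt{\Omega_{11}\Omega_{22}}$ (so that $|t|\le C/\Lambda_0<1$), the elementary expansion $\sqrt{1-t^2}+t\arcsin t=1+\tfrac{1}{2}t^2+O(t^4)$ combined with (\ref{eq2.6}) gives
$$\rho_2(x,y)=\rho_1(x)\rho_1(y)\bigl(1+O(1/(n|x-y|)^2)\bigr),$$
so that $|\rho_2-\rho_1\rho_1|\le C\rho_1(x)\rho_1(y)/(n|x-y|)^2\le C_1/|x-y|^2$, which is (a). Part (b) then follows by integration:
$$\iint_{\{|x-y|\ge\Lambda/n\}\cap[a,b]^2}\frac{dx\,dy}{|x-y|^2}\le 2(b-a)\int_{\Lambda/n}^{b-a}\frac{du}{u^2}\le \frac{2(b-a)n}{\Lambda}.$$

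The principal technical obstacle is the bookkeeping for the decompositions of $\Omega_{ii}$ and $\Delta$: in each of the brackets in (\ref{eq2.8})--(\ref{eq2.10}) the leading off-diagonal term must be shown to cancel against the leading diagonal contribution to reconstitute $\pi^2\rho_1(x)^2 K_{n+1}(x,x)$ (and its $y$-analogue), while the three remaining cross-terms must be verified to be of strictly lower order. Taking $\Lambda_0$ large is essential both to keep $\Delta$ bounded below by a fixed positive multiple of $K_{n+1}(x,x)K_{n+1}(y,y)$ (so the denominator in (\ref{eq2.6}) is harmless) and to ensure that the argument of the arcsin stays uniformly bounded away from $\pm 1$, justifying the Taylor expansion.
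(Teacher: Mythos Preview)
Your approach is correct and leads to the same result, but it is organized quite differently from the paper's proof. The paper splits $\rho_2-\rho_1\rho_1$ additively into three terms $T_1,T_2,T_3$ (see (\ref{eq4.3})--(\ref{eq4.4})) and handles the most delicate one, $T_1$, by writing its numerator as $\det\Sigma-\Psi(x)\Psi(y)$ and applying Laplace's determinant expansion along rows $1,3$ of $\Sigma$; the cancellation of the principal $n^8$ term is then visible term by term, yielding $O(n^6/|x-y|^2)$. You instead factor multiplicatively, writing $\Omega_{ii}=\pi^2\rho_1(z_i)^2K_{n+1}(z_i,z_i)(1+\varepsilon_i)$ and $\Delta=K_{n+1}(x,x)K_{n+1}(y,y)(1+\varepsilon_0)$, and then Taylor-expand $\sqrt{1-t^2}+t\arcsin t$. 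Your ``bookkeeping'' step amounts to the algebraic identity
\[
\Omega_{11}-\pi^2\rho_1(x)^2K_{n+1}(x,x)=-\frac{\bigl(K_{n+1}^{(0,1)}(x,x)\,K_{n+1}(x,y)-K_{n+1}(x,x)\,K_{n+1}^{(0,1)}(y,x)\bigr)^2}{K_{n+1}(x,x)\,\Delta},
\]
which gives $|\varepsilon_i|=O(1/(n|x-y|)^2)$ directly from the off-diagonal kernel bounds. This is more elementary (no Laplace or Sylvester identities) and makes the structure $\rho_2=\rho_1(x)\rho_1(y)(1+O(1/(n|x-y|)^2))$ transparent; the paper's determinantal route is more systematic and generalizes more readily to larger covariance blocks.

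One small correction: from the three bounds you list, $K_{n+1}(x,x)\asymp n$, $K_{n+1}^{(1,1)}(x,x)\asymp n^3$, and $|K_{n+1}^{(0,1)}(x,x)|\lesssim n^2$, the conclusion $\rho_1(x)\asymp n$ does \emph{not} follow, since the subtraction in (\ref{eq2.4}) could then kill the leading term. You need the sharper asymptotic $K_{n+1}^{(0,1)}(x,x)=o(n^2)$ (this is (\ref{eq3.13}) in the paper, a consequence of the universality limits in Lemma~\ref{lem3.3}), and this lower bound on $\rho_1$ is genuinely used when you divide by $\pi^2\rho_1^2K_{n+1}$ to define $\varepsilon_i$.
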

\begin{proof}
See Section~\ref{sec4}.
\end{proof}

Recall that $\Xi $ is defined by {(\ref{eq1.7})}. For the central term we will prove:%

\begin{lem}\label{lem2.4}
\begin{enumerate}[(b)]
\item[\emph{(a)}] Uniformly for $u$ in compact subsets of
$\mathbb{C}\backslash \left \{  0\right \}$, and $x\in \left [ a,b
\right ] $
and $y=x+\frac{u}{n\omega \left ( x\right ) }$,%
%
\begin{equation}\label{eq2.13}
\left ( \frac{1}{n\omega \left ( x\right ) }\right ) ^{2}\left \{
\rho _{2}\left ( x,y\right ) -\rho _{1}\left ( x\right ) \rho _{1}
\left ( y\right ) \right \}  =\Xi \left ( u\right ) +o\left ( 1\right )
.
\end{equation}%
\item[\emph{(b)}] Let $\eta >0$. There exists $C$ such that for
$x\in \left [ a,b\right ] $, $y=x+
\frac{u}{n\omega \left ( x\right ) } $,
$u\in \left [ -\eta , \eta \right ] $ and $n\geq 1$,
\begin{equation*}
\left \vert \rho _{2}\left ( x,y\right ) -\rho _{1}\left ( x\right )
\rho _{1}\left ( y\right ) \right \vert \leq Cn^{2}.
\end{equation*}
\end{enumerate}
\end{lem}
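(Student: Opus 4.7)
The plan for both parts hinges on the universality of the reproducing kernel and its derivatives at the scale $1/(n\omega(x))$. Under the hypotheses of Theorem~\ref{thm1.2} (regularity of $\mu$, continuity and positivity of $\mu'$ on $[a',b']$, and the uniform bound (\ref{eq1.9})), one has uniformly for $a,b$ in compact subsets of $\mathbb{C}$ and $x\in[a,b]$,
\begin{equation*}
\frac{K_{n+1}^{(r,s)}\!\bigl(x+\tfrac{a}{n\omega(x)},\,x+\tfrac{b}{n\omega(x)}\bigr)}{K_{n+1}(x,x)\,\bigl(n\omega(x)\bigr)^{r+s}}\;=\;(-1)^{r}\,S^{(r+s)}(b-a)\,+\,o(1).
\end{equation*}
The base case $r=s=0$ is the classical sine-kernel limit for regular measures, and the derivative version follows by Cauchy's formula applied to the (analytic) normalized kernel; such refinements will be recorded in Section~\ref{sec3}. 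My plan is to substitute $y=x+u/(n\omega(x))$ throughout (\ref{eq2.7})--(\ref{eq2.10}) and read off the leading asymptotics of every building block.

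For part~(a), setting $\kappa:=K_{n+1}(x,x)$ and $\lambda:=n\omega(x)$, a direct expansion using the above limits---and the fact that $S'(0)=0$ makes $K_{n+1}^{(0,1)}(x,x)$ and $K_{n+1}^{(0,1)}(y,y)$ subleading---yields
\begin{equation*}
\Delta\sim\kappa^{2}(1-S(u)^{2}),\qquad \Omega_{jj}\sim\kappa\lambda^{2}\,\frac{G(u)}{1-S(u)^{2}}\;(j=1,2),\qquad \Omega_{12}\sim\kappa\lambda^{2}\,\frac{H(u)}{1-S(u)^{2}},
\end{equation*}
the last two right-hand sides coming out as exactly the $3\times 3$ minors (\ref{eq1.5})--(\ref{eq1.6}). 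The cited Sylvester identity $G^{2}-H^{2}=(1-S^{2})F$ then gives $\Omega_{11}\Omega_{22}-\Omega_{12}^{2}\sim\kappa^{2}\lambda^{4}F(u)/(1-S(u)^{2})$ and $\Omega_{12}/\sqrt{\Omega_{11}\Omega_{22}}\to H(u)/G(u)$. Plugging into (\ref{eq2.6}) and using the same limits at $u=0$ to get $\rho_{1}(x)\sim\lambda/\sqrt{3}$ (since $-S''(0)=\pi^{2}/3$), the factors of $\kappa$ cancel; subtracting $\rho_{1}(x)\rho_{1}(y)\sim\lambda^{2}/3$ and dividing by $\lambda^{2}$ reproduces $\Xi(u)$ as defined in (\ref{eq1.7}). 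Uniformity in $u$ on compact subsets of $\mathbb{C}\setminus\{0\}$ is inherited from the uniformity of universality, since away from $u=0$ both $1-S(u)^{2}$ and $G(u)$ stay bounded away from $0$.

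For part~(b) it suffices to combine crude a priori bounds with a careful analysis near $u=0$. The hypothesis (\ref{eq1.9}) together with standard Christoffel-function estimates give $K_{n+1}(x,x)=O(n)$ on $[a,b]$, and a Bernstein--Markov inequality in the interior of $\mathrm{supp}[\mu]$ yields $|K_{n+1}^{(r,s)}(x,y)|=O(n^{1+r+s})$ uniformly in $x,y\in[a,b]$. These give $\rho_{1}=O(n)$, and substituted into (\ref{eq2.6}) they bound $\rho_{2}$ by $O(n^{2})$ away from the diagonal. The main obstacle is uniform control as $u\to 0$: each of $F(u)$, $G(u)$, $H(u)$, $1-S(u)^{2}$ vanishes at $u=0$, so individual terms in (\ref{eq2.6}) blow up. A short Taylor expansion from $S(u)=1-\pi^{2}u^{2}/6+O(u^{4})$ gives $1-S(u)^{2}\sim\pi^{2}u^{2}/3$, $G(u)=O(u^{4})$, $H(u)=O(u^{4})$ and $F(u)=O(u^{6})$, so that $\sqrt{F(u)}/(1-S(u)^{2})$ and $H(u)/(1-S(u)^{2})^{3/2}$ remain bounded as $u\to 0$; together with $|H/G|\le 1$ (from $F\ge 0$) and the boundedness of $\arcsin$, this forces $|\rho_{2}(x,y)-\rho_{1}(x)\rho_{1}(y)|\le Cn^{2}$ uniformly for $y=x+u/(n\omega(x))$ with $|u|\le\eta$.
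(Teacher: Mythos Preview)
Your treatment of part~(a) is correct and matches the paper's approach: substitute the sine-kernel universality limits into the determinantal expressions for $\Delta$, $\Omega_{11}$, $\Omega_{22}$, $\Omega_{12}$, and read off $F$, $G$, $H$, $1-S^2$.

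Part~(b), however, has a genuine gap. Your entire near-diagonal analysis is carried out for the \emph{limiting} functions $F$, $G$, $H$, $1-S^2$: you compute their vanishing orders at $u=0$ and conclude that $\Xi(u)$ stays bounded there. But the convergence established in part~(a) is uniform only on compact subsets of $\mathbb{C}\setminus\{0\}$; it says nothing about the behaviour of the pre-limit ratio
\[
\Bigl(\tfrac{1}{n\omega(x)}\Bigr)^{2}\bigl\{\rho_2(x,y)-\rho_1(x)\rho_1(y)\bigr\}
\]
as $u\to 0$ \emph{uniformly in $n$}. Knowing that the limit $\Xi$ is bounded near $0$ does not prevent the finite-$n$ expression from blowing up there: if $f_n\to f$ uniformly on $|u|\le\eta$ and $f(u)=O(u^2)$, one cannot conclude $f_n(u)/u^2$ is bounded uniformly in $n$ (take $f_n(u)=u^2+1/n$). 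The quantities $\Delta$, $\det\Sigma=(\Omega_{11}\Omega_{22}-\Omega_{12}^2)\Delta$, and $\Omega_{12}\Delta$ appear in denominators or under square roots in $\rho_2$, and you need their vanishing orders \emph{at finite $n$}, not just in the limit.

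The paper supplies exactly this missing ingredient via structural arguments on the finite-$n$ polynomials in $u$: (i) $\Delta\bigl(x,x+\tfrac{u}{n\omega(x)}\bigr)$ has a double zero at $u=0$ and no other nearby zeros (Cauchy--Schwarz gives $\Delta\ge 0$ with equality at $u=0$, then Hurwitz); (ii) $\det\Sigma$ has a zero of order $\ge 4$ at $u=0$ (positive semi-definiteness forces even order, and the block-determinant inequality $\det\Sigma\le\Delta\cdot\det(\text{lower-right }2\times 2)$ pushes the order to at least $4$); (iii) $\Omega_{12}\Delta$ has a zero of order $\ge 3$ at $u=0$ (row/column operations followed by Taylor expansion of each entry). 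Only after these facts are in hand can one divide by the appropriate power of $u$ and pass to the limit uniformly through $u=0$. Your Taylor expansions of $F$, $G$, $H$ are consistent with this picture but do not substitute for it.
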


\begin{proof}%
See Section~\ref{sec5}.
\end{proof}

The second integral in {(\ref{eq2.5})} is simpler:%

\begin{lem}\label{lem2.5}
\begin{equation}\label{eq2.14}
\frac{1}{n}\int _{a}^{b}\rho _{1}\left ( x\right ) dx=
\frac{1}{\sqrt{3}}%
\int _{a}^{b}\omega \left ( x\right ) dx+o\left ( 1\right ) .
\end{equation}
\end{lem}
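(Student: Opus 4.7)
The plan is to derive the pointwise asymptotic $\rho_{1}(x)/n \to \omega(x)/\sqrt{3}$, uniformly on $[a,b]$, after which (\ref{eq2.14}) follows by bounded convergence. By (\ref{eq2.4}),
\begin{equation*}
\pi^{2}\rho_{1}(x)^{2} \;=\; \frac{K_{n+1}^{(1,1)}(x,x)}{K_{n+1}(x,x)} - \left(\frac{K_{n+1}^{(0,1)}(x,x)}{K_{n+1}(x,x)}\right)^{2},
\end{equation*}
so everything reduces to the diagonal asymptotics of these two ratios.

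Under the hypotheses of Theorem~\ref{thm1.2} (regularity of $\mu$, $\mu'$ positive and continuous on $[a',b']$, and the uniform bound (\ref{eq1.9})), sine kernel universality holds in the interior of $\mathrm{supp}[\mu]$: writing $\tau(x) := n\omega(x)$,
\begin{equation*}
\frac{K_{n+1}\!\left(x + a/\tau(x),\; x + b/\tau(x)\right)}{K_{n+1}(x,x)} \;\longrightarrow\; S(a - b)
\end{equation*}
uniformly for $x \in [a,b]$ and $(a,b)$ in compact subsets of $\mathbb{R}^{2}$. I expect Section~\ref{sec3} to record this, together with the corresponding uniform asymptotics for the mixed derivatives $K_{n+1}^{(r,s)}$; these are the same inputs that drive Lemma~\ref{lem2.4}. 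Differentiating the kernel limit in $a$ and $b$, setting $a = b = 0$, and using $S(u) = 1 - \pi^{2} u^{2}/6 + O(u^{4})$ near the origin give
\begin{equation*}
\frac{K_{n+1}^{(0,1)}(x,x)}{\tau(x)\,K_{n+1}(x,x)} \longrightarrow -S'(0) = 0, \qquad \frac{K_{n+1}^{(1,1)}(x,x)}{\tau(x)^{2}\,K_{n+1}(x,x)} \longrightarrow -S''(0) = \frac{\pi^{2}}{3},
\end{equation*}
uniformly in $x \in [a,b]$. Substituting into the formula for $\pi^{2}\rho_{1}(x)^{2}$ yields $\pi^{2}\rho_{1}(x)^{2} = \pi^{2}(n\omega(x))^{2}/3 + o(n^{2})$, whence $\rho_{1}(x)/n = \omega(x)/\sqrt{3} + o(1)$ uniformly on $[a,b]$, and integration produces (\ref{eq2.14}).

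The only substantive obstacle is rigorously extracting the differentiated kernel asymptotics from the universality statement, i.e.\ converting a pointwise limit of $K_{n+1}$ into uniform control on the diagonal values $K_{n+1}^{(0,1)}(x,x)$ and $K_{n+1}^{(1,1)}(x,x)$. The bound (\ref{eq1.9}) together with continuity of $\mu'$ is exactly what permits this. But this is the same machinery that is needed to identify the determinants $F$, $G$, $H$ of (\ref{eq1.4})--(\ref{eq1.6}) in the proof of Lemma~\ref{lem2.4}, so Lemma~\ref{lem2.5} essentially drops out as the diagonal specialization ($u = 0$) of the computations carried out there.
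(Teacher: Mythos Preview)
Your proposal is correct and matches the paper's approach exactly. The paper derives $\rho_{1}(x)/(n\omega(x)) = 1/\sqrt{3} + o(1)$ uniformly on $[a,b]$ (equation (\ref{eq5.11})) from the diagonal specialisation of the universality asymptotics in Lemma~\ref{lem3.3}, and Lemma~\ref{lem2.5} is then stated as an immediate consequence---just as you anticipated.
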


\begin{proof}%
See Section~\ref{sec5}.
\end{proof}

\begin{proof}[Proof of {Theorem~\ref{thm1.2}}]%
We fix $\Lambda >\eta >0$ and split%
%
\begin{eqnarray}\label{eq2.15}
&&\int _{a}^{b}\int _{a}^{b}\left \{  \rho _{2}\left ( x,y\right ) -
\rho _{1}\left ( x\right ) \rho _{1}\left ( y\right ) \right \}  dy
\text{ }dx
\nonumber
\\
&=&\int _{a}^{b}\left [ \int _{I}+\int _{J}+\int _{K}\right ] \left
\{  \rho _{2}\left ( x,y\right ) -\rho _{1}\left ( x\right ) \rho _{1}
\left ( y\right ) \right \}  dy\text{ }dx,
\end{eqnarray}%
where for a given $x$,
\begin{eqnarray*}
I &=&\left \{  y\in \left [ a,b\right ] :\left \vert y-x\right \vert \geq \Lambda /\left ( n\omega \left ( x\right ) \right ) \right \}  ;
\\
J &=&\left \{  y\in \left [ a,b\right ] :\eta /\left ( n\omega \left ( x
\right ) \right ) \leq \left \vert y-x\right \vert <\Lambda /\left ( n
\omega \left ( x\right ) \right ) \right \}  ;
\\
K &=&\left \{  y\in \left [ a,b\right ] :\left \vert y-x\right \vert <
\eta /\left ( n\omega \left ( x\right ) \right ) \right \}  .
\end{eqnarray*}%
If $\omega _{0}$ is the maximum of $\omega \left ( x\right ) $ in
$\left [ a,b%
\right ] $, (recall that $\omega $ is positive and continuous in
$\left [ a,b%
\right ] $) then
%
\begin{eqnarray}\label{eq2.16}
&&\left \vert \int _{a}^{b}\int _{I}\left \{  \rho _{2}\left ( x,y
\right ) -\rho _{1}\left ( x\right ) \rho _{1}\left ( y\right )
\right \}  dy\text{ }dx\right \vert
\nonumber
\\
&\leq &\iint _{\left \{  \left ( x,y\right ) :x,y\in \left [ a,b
\right ] ,\left \vert x-y\right \vert \geq \Lambda /\left ( n\omega _{0}
\right ) \right \}  }\left \vert \rho _{2}\left ( x,y\right ) -\rho _{1}
\left ( x\right ) \rho _{1}\left ( y\right ) \right \vert dy\text{ }dx
\nonumber
\\
&\leq &C_{1}\frac{n\omega _{0}}{\Lambda },
\end{eqnarray}
by {Lemma~\ref{lem2.3}}(b), provided $\Lambda /\omega _{0}\geq \Lambda _{0}$. Next,
\begin{eqnarray*}
&&\frac{1}{n}\int _{a}^{b}\int _{J}\left \{  \rho _{2}\left ( x,y
\right ) -\rho _{1}\left ( x\right ) \rho _{1}\left ( y\right )
\right \}  dy\text{ }dx
\\
&=&\int _{a}^{b}\omega \left ( x\right ) \int _{
\substack{ \eta \leq \left \vert u\right \vert \leq \Lambda , \\ x+\frac{u}{n\omega \left ( x\right ) }\in
\left [ a,b\right ] }}\left \{  \rho _{2}\left ( x,x+
\frac{u}{n\omega \left (
x\right ) }\right ) -\rho _{1}\left ( x\right ) \rho _{1}\left ( x+
\frac{u}{%
n\omega \left ( x\right ) }\right ) \right \}
\frac{1}{\left ( n\omega \left (
x\right ) \right ) ^{2}}du\text{ }dx.
\end{eqnarray*}%
Note that if $\eta \leq \left \vert u\right \vert \leq \Lambda $ and
$x\in
\left [ a,b\right ] $ but
$x+\frac{u}{n\omega \left ( x\right ) }\notin \left [ a,b\right ] $, then
$x$ is at a distance of $O\left ( \frac{\Lambda }{n}%
\right ) $ to $a$ or $b$, and in view of {Lemma~\ref{lem2.4}}(b), the integral over
such $\left ( x,u\right ) $ is $O\left ( \frac{1}{n}\right ) $. Using {Lemma~\ref{lem2.4}}(a), we deduce that
%
\begin{eqnarray}\label{eq2.17}
&&\lim _{n\rightarrow \infty }\frac{1}{n}\int _{a}^{b}\int _{J}\left
\{  \rho _{2}\left ( x,y\right ) -\rho _{1}\left ( x\right ) \rho _{1}
\left ( y\right ) \right \}  dydx
\nonumber
\\
&=&\left ( \int _{a}^{b}\omega \left ( x\right ) dx\right ) \text{ }
\left ( \int _{\eta \leq \left \vert u\right \vert \leq \Lambda }\Xi
\left ( u\right ) du\right ) .
\end{eqnarray}%
Finally, from {Lemma~\ref{lem2.4}}(b), (but with a different fixed $\eta $ there),%
%
\begin{equation}\label{eq2.18}
\frac{1}{n}\left \vert \int _{a}^{b}\int _{K}\left \{  \rho _{2}\left (
x,y\right ) -\rho _{1}\left ( x\right ) \rho _{1}\left ( y\right )
\right \}  dydx\right \vert \leq C\eta ,
\end{equation}%
where $C$ is independent of $n,\eta $. Combining the three estimates
{(\ref{eq2.16})}--{(\ref{eq2.18})}
over $I,J,K$, with {(\ref{eq2.15})} and {Lemma~\ref{lem2.5}}, we obtain
\begin{eqnarray*}
&&\limsup _{n\rightarrow \infty }\left \vert \frac{1}{n}Var\left [ N_{n}
\left ( \left [ a,b\right ] \right ) \right ] -\left ( \int _{a}^{b}
\omega \left ( x\right ) dx\right ) \text{ }\left ( \int _{\eta \leq
\left \vert u\right \vert \leq \Lambda }\Xi \left ( u\right ) du+
\frac{1}{\sqrt{3}}\right ) \right \vert \\
&\leq &C\left ( \frac{1}{\Lambda }+\eta \right ) ,
\end{eqnarray*}%
where $C$ is independent of $n,\Lambda ,\eta $. Now if
$B>A\geq \Lambda _{0}$, then {Lemma~\ref{lem2.3}}(b) and {Lemma~\ref{lem2.4}}(a) show that
\begin{eqnarray*}
&&\left ( \int _{a}^{b}\omega \left ( x\right ) dx\right ) \text{ }
\left \vert \int _{A\leq u\leq B}\Xi \left ( u\right ) du\right
\vert \\
&=&\lim _{n\rightarrow \infty }\frac{1}{n}\left \vert \int _{a}^{b}
\int _{\left \{  y\in \left [ a,b\right ] :A\omega \left ( x\right ) /n
\leq y-x<B\omega \left ( x\right ) /n\right \}  }\left \{  \rho _{2}
\left ( x,y\right ) -\rho _{1}\left ( x\right ) \rho _{1}\left ( y
\right ) \right \}  dy%
\text{ }dx\right \vert \leq C_{1}/A.
\end{eqnarray*}%
It follows that
$\int _{\Lambda _{0}}^{\infty }\Xi \left ( u\right ) du$ converges. Similarly,
$\int _{-\infty }^{-\Lambda _{0}}\Xi \left ( u\right ) du$ converges. So
we may let $\Lambda \rightarrow \infty $ above and deduce that
\begin{eqnarray*}
&&\limsup _{n\rightarrow \infty }\left \vert \frac{1}{n}Var\left [ N_{n}
\left ( \left [ a,b\right ] \right ) \right ] -\left ( \int _{a}^{b}
\omega \left ( x\right ) dx\right ) \text{ }\left ( \int _{\left
\vert u\right \vert \geq \eta }\Xi \left ( u\right ) du+
\frac{1}{\sqrt{3}}\right ) \right \vert \\
&\leq &C\eta .
\end{eqnarray*}%
On the other hand, {Lemma~\ref{lem2.4}}(a) and {Lemma~\ref{lem2.4}}(b) show that if
$0<\delta <\eta $,
\begin{eqnarray*}
&&\left ( \int _{a}^{b}\omega \left ( x\right ) dx\right ) \text{ }
\left \vert \int _{\delta \leq u\leq \eta }\Xi \left ( u\right ) du
\right \vert \\
&=&\lim _{n\rightarrow \infty }\frac{1}{n}\left \vert \int _{a}^{b}
\int _{\left \{  y\in \left [ a,b\right ] :\delta \omega \left ( x
\right ) /n\leq y-x<\eta \omega \left ( x\right ) /n\right \}  }\left
\{  \rho _{2}\left ( x,y\right ) -\rho _{1}\left ( x\right ) \rho _{1}
\left ( y\right ) \right \}  dy\text{ }dx\right \vert \leq C_{2}\eta .
\end{eqnarray*}%
It follows that $\int _{0}^{\eta }\Xi \left ( u\right ) du$ converges.
Similarly, $\int _{-\eta }^{0}\Xi \left ( u\right ) du$ converges. So we
may let $\eta \rightarrow 0+$ above to deduce the result.\vspace*{2pt}
\end{proof}

\begin{proof}[Proof of {Corollary~\ref{cor1.3}}]
Under the hypotheses of this theorem, Badkov even established asymptotics
for the orthonormal polynomials \cite[p. 42, Corollary 2]{Badkov1979} that
trivially imply {(\ref{eq1.9})}. Also, as noted above, since $\mu ^{\prime }$ satisfies
Szeg\H{o}'s condition and so is positive a.e. in
$\left [ -1,1%
\right ] $, it is regular \cite[Corollary 4.1.3]{StahlTotik1992}. Then
the result follows from {Theorem~\ref{thm1.2}}.\vspace*{2pt}
\end{proof}

\section{Auxiliary results}\vspace*{2pt}
\label{sec3}

Throughout this section, we assume that $\mu $ is as in {Theorem~\ref{thm1.2}}. We begin
by recording some determinantal and other formulae: let
$\Delta ,\Omega _{11},\Omega _{12},\Omega _{22}$ be as in {(\ref{eq2.7})}--{(\ref{eq2.10})}.
Also let
\begin{equation}\label{eq3.1}
\Sigma =\left [
\begin{array}{c@{\quad }c@{\quad }c@{\quad }c}
K_{n+1}\left ( x,x\right ) & K_{n+1}\left ( x,y\right ) & K_{n+1}^{
\left ( 0,1\right ) }\left ( x,x\right ) & K_{n+1}^{\left ( 0,1
\right ) }\left ( x,y\right )
\\
K_{n+1}\left ( x,y\right ) & K_{n+1}\left ( y,y\right ) & K_{n+1}^{
\left ( 0,1\right ) }\left ( y,x\right ) & K_{n+1}^{\left ( 0,1
\right ) }\left ( y,y\right )
\\
K_{n+1}^{\left ( 0,1\right ) }\left ( x,x\right ) & K_{n+1}^{\left ( 0,1
\right ) }\left ( y,x\right ) & K_{n+1}^{\left ( 1,1\right ) }\left ( x,x
\right ) & K_{n+1}^{\left ( 1,1\right ) }\left ( x,y\right )
\\
K_{n+1}^{\left ( 0,1\right ) }\left ( x,y\right ) & K_{n+1}^{\left ( 0,1
\right ) }\left ( y,y\right ) & K_{n+1}^{\left ( 1,1\right ) }\left ( x,y
\right ) & K_{n+1}^{\left ( 1,1\right ) }\left ( y,y\right )%
\end{array}%
\right ] .
\end{equation}

\begin{lem}\label{lem3.1}
\begin{enumerate}[(e)]
\item[\emph{(a)}]%
%
\begin{equation}\label{eq3.2}
\Delta \left ( x,y\right ) =\det \left [
\begin{array}{c@{\quad }c}
K_{n+1}\left ( x,x\right ) & K_{n+1}\left ( x,y\right )
\\
K_{n+1}\left ( y,x\right ) & K_{n+1}\left ( y,y\right )%
\end{array}%
\right ] ;
\end{equation}%
\item[\emph{(b)}] %
%
\begin{equation}\label{eq3.3}
\Delta \Omega _{11}=\det \left [
\begin{array}{c@{\quad }c@{\quad }c}
K_{n+1}\left ( y,y\right ) & K_{n+1}\left ( y,x\right ) & K_{n+1}^{
\left ( 0,1\right ) }\left ( y,x\right )
\\
K_{n+1}\left ( x,y\right ) & K_{n+1}\left ( x,x\right ) & K_{n+1}^{
\left ( 0,1\right ) }\left ( x,x\right )
\\
K_{n+1}^{\left ( 1,0\right ) }\left ( x,y\right ) & K_{n+1}^{\left ( 0,1
\right ) }\left ( x,x\right ) & K_{n+1}^{\left ( 1,1\right ) }\left ( x,x
\right )%
\end{array}%
\right ] ;
\end{equation}%
\item[\emph{(c)}]
%
\begin{equation}\label{eq3.4}
\Delta \Omega _{22}=\det \left [
\begin{array}{c@{\quad }c@{\quad }c}
K_{n+1}\left ( x,x\right ) & K_{n+1}\left ( x,y\right ) & K_{n+1}^{
\left ( 0,1\right ) }\left ( x,y\right )
\\
K_{n+1}\left ( y,x\right ) & K_{n+1}\left ( y,y\right ) & K_{n+1}^{
\left ( 0,1\right ) }\left ( y,y\right )
\\
K_{n+1}^{\left ( 1,0\right ) }\left ( y,x\right ) & K_{n+1}^{\left ( 1,0
\right ) }\left ( y,y\right ) & K_{n+1}^{\left ( 1,1\right ) }\left ( y,y
\right )%
\end{array}%
\right ] ;
\end{equation}%
\item[\emph{(d)}]
%
\begin{equation}\label{eq3.5}
\Delta \Omega _{12}=\det \left [
\begin{array}{c@{\quad }c@{\quad }c}
K_{n+1}\left ( x,x\right ) & K_{n+1}\left ( x,y\right ) & K_{n+1}^{
\left ( 0,1\right ) }\left ( x,x\right )
\\
K_{n+1}\left ( y,x\right ) & K_{n+1}\left ( y,y\right ) & K_{n+1}^{
\left ( 0,1\right ) }\left ( y,x\right )
\\
K_{n+1}^{\left ( 1,0\right ) }\left ( y,x\right ) & K_{n+1}^{\left ( 0,1
\right ) }\left ( y,y\right ) & K_{n+1}^{\left ( 1,1\right ) }\left ( y,x
\right )%
\end{array}%
\right ] .
\end{equation}%
\item[\emph{(e)}] Let $\Sigma $ be given by {(\ref{eq3.1})}. Then%
%
\begin{equation}\label{eq3.6}
\left ( \Omega _{11}\Omega _{22}-\Omega _{12}^{2}\right ) \Delta =
\det \left ( \Sigma \right ) .
\end{equation}
\end{enumerate}
\end{lem}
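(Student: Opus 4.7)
The plan is to reduce every part to a single tool, the Schur complement identity
\begin{equation*}
\det\begin{pmatrix} P & Q \\ R & T \end{pmatrix} = \det(P)\,\det(T - R P^{-1} Q),
\end{equation*}
applied either to $\Sigma$ itself or to a $3\times 3$ submatrix obtained by bordering the $2\times 2$ block $A = \begin{pmatrix} K_{n+1}(x,x) & K_{n+1}(x,y) \\ K_{n+1}(x,y) & K_{n+1}(y,y) \end{pmatrix}$, whose determinant equals $\Delta$. Part (a) is in fact just (2.7) rewritten as a $2\times 2$ determinant after using the symmetry $K_{n+1}(x,y)=K_{n+1}(y,x)$. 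Throughout the remaining parts I will freely use the analogous symmetry $K_{n+1}^{(1,0)}(x,y) = K_{n+1}^{(0,1)}(y,x)$, which is immediate from (2.2).

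For part (e), the starting observation is that $\Sigma$ is the Gram matrix of the four vectors in $\mathbb{R}^{n+1}$ with entries $p_j(x)$, $p_j(y)$, $p_j'(x)$, $p_j'(y)$ respectively, so it is symmetric. Partition $\Sigma = \begin{pmatrix} A & B \\ B^T & D \end{pmatrix}$ with $A$ the top-left $2\times 2$ block. The Schur complement formula then delivers $\det\Sigma = \Delta\,\det(D - B^T A^{-1} B)$, so the task reduces to matching $D - B^T A^{-1}B$ entry by entry with the covariance matrix $\Omega$. I will substitute the adjugate formula $A^{-1} = \Delta^{-1}\mathrm{adj}(A)$ and expand the three distinct entries of $D - B^T A^{-1}B$; after cancelling the $\Delta^{-1}$ and regrouping the six resulting products of kernel values, each of the three entries lands exactly on the right-hand sides of (2.8), (2.9), (2.10). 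Once that identification is made, $\det(D - B^T A^{-1}B) = \Omega_{11}\Omega_{22}-\Omega_{12}^{2}$ is automatic, yielding (3.6).

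Parts (b), (c), (d) will be handled by the same device, now applied to the $3\times 3$ bordered matrix
\begin{equation*}
\begin{pmatrix} \tilde A & \mathbf{u} \\ \mathbf{v}^T & k \end{pmatrix},
\end{equation*}
where $\tilde A$ is a row/column permutation of $A$ (so that $\det\tilde A = \Delta$), $\mathbf{u}$ and $\mathbf{v}$ are two columns of first-order derivative kernel values, and $k$ is one of $K_{n+1}^{(1,1)}(x,x)$, $K_{n+1}^{(1,1)}(y,y)$, $K_{n+1}^{(1,1)}(x,y)$. The Schur complement evaluates to $\Delta\bigl(k - \mathbf{v}^T A^{-1} \mathbf{u}\bigr)$; computing the bilinear form $\mathbf{v}^T A^{-1}\mathbf{u}$ via $\mathrm{adj}(A)/\Delta$ reproduces the fraction in (2.8), (2.9), or (2.10) verbatim. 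For (b) and (c) the bordered matrix is symmetric ($\mathbf{u} = \mathbf{v}$), while for (d) I will take, after reading off (3.5), $\mathbf{u} = (K_{n+1}^{(0,1)}(x,x),K_{n+1}^{(0,1)}(y,x))^T$ and $\mathbf{v} = (K_{n+1}^{(0,1)}(x,y),K_{n+1}^{(0,1)}(y,y))^T$, which is what generates the non-symmetric cross-term structure of (2.10).

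The only genuine obstacle is bookkeeping: tracking which slot of each $K_{n+1}^{(r,s)}$ carries the derivative and checking that the cross terms pair up correctly after the kernel symmetry is applied. As a conceptual sanity check, I can invoke the standard Gaussian-conditioning interpretation — $\Sigma$ is the covariance matrix of $(G_n(x),G_n(y),G_n'(x),G_n'(y))$, so $\Omega$ by construction is the conditional covariance of $(G_n'(x),G_n'(y))$ given $(G_n(x),G_n(y))$, and $\Omega = D - B^T A^{-1}B$ is the textbook formula for that conditional covariance — which makes the identities in (b)--(e) essentially automatic and reduces the proof to verifying that the authors' formulas (2.8)--(2.10) really agree with the entries of $D - B^T A^{-1}B$.
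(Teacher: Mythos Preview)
Your proposal is correct. For (a)--(d) your Schur complement computation is essentially the same calculation the paper does by ``expanding along the bottom row'': cofactor expansion of a $3\times 3$ determinant along the last row, with the top-left $2\times 2$ minor equal to $\Delta$, is exactly the bordered-matrix Schur formula $\det = \Delta\,(k - \mathbf{v}^T A^{-1}\mathbf{u})$ written out.

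For (e) the routes genuinely differ. The paper's Section~3 proof applies Sylvester's determinant identity to $\Sigma$, obtaining $\det(\Sigma)\,\Delta = (\Delta\Omega_{11})(\Delta\Omega_{22}) - (\Delta\Omega_{12})^2$ and then invoking the $3\times 3$ identities (b)--(d) already proved; so (e) is deduced \emph{from} (b)--(d). Your approach instead uses the $2\times 2$ block Schur complement $\det\Sigma = \Delta\,\det(D - B^T A^{-1}B)$ and identifies $D - B^T A^{-1}B$ with $\Omega$ entrywise --- which is in fact precisely what the paper does in its Appendix (Section~6), where $\Omega$ is \emph{defined} as $C - B^T A^{-1}B$ and then shown to have entries (2.8)--(2.10). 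Your route is more self-contained and does not logically rely on (b)--(d); the paper's route avoids rechecking the entrywise match but needs the $3\times 3$ formulas first. Either way the bookkeeping you flag --- tracking which argument carries the derivative and using $K_{n+1}^{(1,0)}(x,y)=K_{n+1}^{(0,1)}(y,x)$ --- is the only real work.
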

\begin{proof}

\noindent (a)--(d): These follow by expanding the determinants for example along
the bottom row.

\noindent (e) This can be established using Sylvester's determinant identity
\cite[p.
24, Thm. 1.4.1]{BakerGravesMorris1996} on the matrix $\Sigma $ defined
by {(\ref{eq3.1})}:
\begin{equation*}
\det \left ( \Sigma \right ) \det \left ( \Sigma _{3,4;3,4}\right ) =
\det \left ( \Sigma _{3;3}\right ) \det \left ( \Sigma _{4;4}\right ) -
\det \left ( \Sigma _{3;4}\right ) \det \left ( \Sigma _{4;3}\right ),
\end{equation*}%
where $\Sigma _{3,4;3,4}$ denotes the $2\times 2$ matrix formed from
$\Sigma $ by removing the 3rd and 4th rows and columns of $\Sigma $, while
$\Sigma _{r;s}$ denotes the $3\times 3$ matrix formed from $\Sigma $ by
removing the $r$th row and $s$th column. This identity and (a--d) yield
\begin{equation*}
\det \left ( \Sigma \right ) \Delta =\left ( \Delta \Omega _{22}
\right ) \left ( \Delta \Omega _{11}\right ) -\left ( \Delta \Omega _{12}
\right ) ^{2}.
\end{equation*}%
Note that in identifying $\det \left ( \Sigma _{4;4}\right ) $ with
$\Delta \Omega _{11}$, we have to swap the 1st and 2nd rows and columns.
Moreover, we use that $\Sigma _{4;3}^{T}=\Sigma _{3;4}$.
\end{proof}

Next, we record some estimates on the reproducing kernels and their derivatives:%

\begin{lem}\label{lem3.2}
Let $\left [ a,b\right ] $ be a subinterval of $
\left ( a^{\prime },b^{\prime }\right ) $. Then for $r,s=0,1$
and
$r=2,s=0$; and for all $n\geq 1$ and $x,y\in \left [ a,b%
\right ] $,
%
\begin{equation}\label{eq3.7}
\left \vert K_{n}^{\left ( r,s\right ) }\left ( x,y\right ) \right
\vert \leq \frac{Cn^{r+s}}{\left \vert x-y\right \vert +\frac{1}{n}}.
\end{equation}
\end{lem}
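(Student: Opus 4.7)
The plan is to combine the Christoffel--Darboux identity with a local Bernstein inequality for derivatives of polynomials, splitting $[a,b]\times[a,b]$ into a near-diagonal regime $|x-y|\le 1/n$ and an off-diagonal regime $|x-y|\ge 1/n$.

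First, from (\ref{eq1.9}) together with Bernstein's inequality for algebraic polynomials applied on a nested chain $[a,b]\subset [a^{*},b^{*}]\subset (a',b')$, iterated, I would show
\[
\sup_{n\ge 1}\,n^{-k}\,\|p_n^{(k)}\|_{L_\infty[a,b]}<\infty,\qquad k=0,1,2.
\]
Squaring and summing yields the diagonal estimate $K_n^{(r,r)}(x,x)=\sum_{j=0}^{n-1}(p_j^{(r)}(x))^2\le Cn^{2r+1}$ for $x\in[a,b]$ and $r=0,1,2$, and Cauchy--Schwarz, $|K_n^{(r,s)}(x,y)|^2\le K_n^{(r,r)}(x,x)K_n^{(s,s)}(y,y)$, then gives $|K_n^{(r,s)}(x,y)|\le Cn^{r+s+1}$ uniformly for $x,y\in[a,b]$. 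This matches, up to a constant, the right-hand side of (\ref{eq3.7}) throughout the regime $|x-y|\le 1/n$.

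For the off-diagonal regime I would start from the Christoffel--Darboux identity
\[
K_n(x,y)(x-y)=\frac{\gamma_{n-1}}{\gamma_n}\bigl[p_n(x)p_{n-1}(y)-p_{n-1}(x)p_n(y)\bigr],
\]
where $\gamma_{n-1}/\gamma_n$ is bounded because $\mathrm{supp}(\mu)$ is compact and $\mu$ is regular in the Stahl--Totik--Ullman sense (equivalently, the Jacobi recurrence coefficients are bounded). Direct substitution gives $|K_n(x,y)|\le C/|x-y|$. Differentiating this identity appropriately and solving for the highest-order derivative of $K_n$ produces, for each case $(r,s)\in\{(1,0),(0,1),(1,1),(2,0)\}$, a relation of the form
\[
K_n^{(r,s)}(x,y)\,(x-y)=\frac{\gamma_{n-1}}{\gamma_n}\,B_{r,s}(x,y)-\sum_{r'+s'<r+s}c_{r',s'}\,K_n^{(r',s')}(x,y),
\]
where $B_{r,s}$ is a sum of bilinear products of derivatives of $p_n$ and $p_{n-1}$ at $x$ and $y$ of total order $r+s$, hence of size $O(n^{r+s})$ by the Bernstein step. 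An induction on $r+s$, using that $1/|x-y|^{j+1}\le n^j/|x-y|$ whenever $|x-y|\ge 1/n$, bounds every lower-order term by $Cn^{r+s}/|x-y|$, which combined with the near-diagonal estimate yields (\ref{eq3.7}).

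The main obstacle is the Bernstein step: (\ref{eq1.9}) only supplies a sup-norm bound on $[a',b']$, so to extract $O(n)$ and $O(n^2)$ bounds for $p_n'$ and $p_n''$ on the strictly smaller interval $[a,b]$ one must apply the classical Bernstein inequality on an interval successively on nested subintervals, losing a factor of $n$ per differentiation. Once this is in place, the inductive bookkeeping of the Leibniz-expanded Christoffel--Darboux formula is essentially mechanical.
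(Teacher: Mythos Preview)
Your proposal is correct and matches the paper's proof in all essential ideas: Bernstein on nested intervals to get $\|p_n^{(k)}\|_{L_\infty[a,b]}=O(n^k)$, Cauchy--Schwarz for the trivial bound $|K_n^{(r,s)}|\le Cn^{r+s+1}$, and differentiation of the Christoffel--Darboux identity for the off-diagonal decay. The only difference is packaging: the paper writes out each case $(r,s)\in\{(0,0),(1,0),(1,1),(2,0)\}$ explicitly, obtaining intermediate bounds of the shape $C\bigl(n^{r+s}/|x-y|+\cdots+1/|x-y|^{r+s+1}\bigr)$ and then taking the minimum with $n^{r+s+1}$, whereas your inductive scheme with the inequality $1/|x-y|^{j+1}\le n^{j}/|x-y|$ on the set $|x-y|\ge 1/n$ collapses these intermediate terms in one stroke. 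One minor over-attribution: boundedness of $\gamma_{n-1}/\gamma_n$ needs only compact support (as the paper notes, citing Freud), not Stahl--Totik--Ullman regularity.
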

\begin{proof}%
First we note that since $\mu $ has compact support
\cite[p. 41]{Freud1971},
\begin{equation*}
C_{2}=\sup _{n\geq 1}\frac{\gamma _{n-1}}{\gamma _{n}}<\infty .
\end{equation*}%
The Christoffel-Darboux formula asserts that
\begin{equation*}
K_{n}\left ( x,y\right ) =\frac{\gamma _{n-1}}{\gamma _{n}}
\frac{p_{n}\left (
x\right ) p_{n-1}\left ( y\right ) -p_{n-1}\left ( x\right ) p_{n}\left ( y\right )
}{x-y},
\end{equation*}%
so that using our bound
$\left \vert p_{n}\left ( x\right ) \right \vert \leq C_{1}$ for
$x,y\in \left [ a^{\prime },b^{\prime }\right ] $,
\begin{equation*}
\left \vert K_{n}\left ( x,y\right ) \right \vert \leq
\frac{2C_{2}C_{1}^{2}}{%
\left \vert x-y\right \vert }.
\end{equation*}%
Moreover, by Cauchy-Schwartz,
\begin{equation*}
\left \vert K_{n}\left ( x,y\right ) \right \vert \leq \left ( \sum _{j=0}^{n-1}p_{j}^{2}
\left ( x\right ) \right ) ^{1/2}\left ( \sum _{j=0}^{n-1}p_{j}^{2}
\left ( y\right ) \right ) ^{1/2}\leq C_{1}^{2}n.
\end{equation*}%
Combining the last two inequalities gives
\begin{equation*}
\left \vert K_{n}\left ( x,y\right ) \right \vert \leq C_{1}^{2}\min
\left \{  \frac{2C_{2}}{\left \vert x-y\right \vert },n\right \},
\end{equation*}%
so that (for example, using the inequality between arithmetic and harmonic
means) we have the result {(\ref{eq3.7})} for $r=s=0$. Next,
%
\begin{eqnarray}\label{eq3.8}
&&K_{n}^{\left ( 1,0\right ) }\left ( x,y\right )
\nonumber
\\
&=&\frac{\gamma _{n-1}}{\gamma _{n}}\left (
\frac{p_{n}^{\prime }\left (
x\right ) p_{n-1}\left ( y\right ) -p_{n-1}^{\prime }\left ( x\right )
p_{n}\left ( y\right ) }{x-y}+
\frac{p_{n-1}\left ( x\right ) p_{n}\left (
y\right ) -p_{n-1}\left ( y\right ) p_{n}\left ( x\right ) }{\left ( x-y\right )
^{2}}\right ) .
\end{eqnarray}
To estimate the derivatives, we use Bernstein's inequality for derivatives,
namely for polynomials of degree $\leq n$,
\begin{equation*}
\left \vert P^{\prime }\left ( x\right ) \right \vert \leq
\frac{n}{\sqrt{1-x^{2}%
}}\left \Vert P\right \Vert _{L_{\infty }\left [ -1,1\right ] },x\in
\left ( -1,1\right ) .
\end{equation*}%
This has the following consequence: for $j,n\geq 1$ and polynomials
$P$ of degree $\leq n$,
\begin{equation*}
\left \Vert P^{(j)}\right \Vert _{L_{\infty }\left [ a,b\right ] }
\leq C_{3}n^{j}\left \Vert P\right \Vert _{L_{\infty }\left [ a^{
\prime },b^{\prime }%
\right ] }.
\end{equation*}%
Here $C_{3}$ depends on $j,a,b,a^{\prime },b^{\prime }$ but not on
$P$ nor on the degree $n$ of $P$. It then follows that for
$j=0,1,2$,
\begin{equation*}
C_{4}=\sup _{n\geq 1}\left \Vert p_{n}^{\left ( j\right ) }\right
\Vert _{L_{\infty }\left [ a,b\right ] }/n^{j}<\infty .
\end{equation*}%
Also then, from {(\ref{eq3.8})}, for $x,y\in \left [ a,b\right ] $,
\begin{equation*}
\left \vert K_{n}^{\left ( 1,0\right ) }\left ( x,y\right ) \right
\vert \leq 2C_{2}\left \{
\frac{C_{1}C_{4}n}{\left \vert x-y\right \vert }+
\frac{C_{1}^{2}%
}{\left \vert x-y\right \vert ^{2}}\right \}  .
\end{equation*}%
Next, by Cauchy-Schwartz,
\begin{equation*}
\left \vert K_{n}^{\left ( 1,0\right ) }\left ( x,y\right ) \right
\vert \leq \left ( \sum _{j=0}^{n-1}p_{j}^{\prime }\left ( x\right ) ^{2}
\right ) ^{1/2}\left ( \sum _{j=0}^{n-1}p_{j}^{2}\left ( y\right )
\right ) ^{1/2}\leq C_{4}C_{1}n^{2}.
\end{equation*}%
Thus
\begin{equation*}
\left \vert K_{n}^{\left ( 1,0\right ) }\left ( x,y\right ) \right
\vert \leq C_{5}\min \left \{  \frac{n}{\left \vert x-y\right \vert }+
\frac{1}{\left \vert x-y\right \vert ^{2}},n^{2}\right \}  .
\end{equation*}%
This yields {(\ref{eq3.7})} for $r=1,s=0$. Of course $r=0,s=1$ follows by symmetry.
Finally,
\begin{eqnarray*}
K_{n}^{\left ( 1,1\right ) } (x,y) &=&\frac{\gamma _{n-1}}{\gamma _{n}} \left(
\frac{p_{n}^{\prime }\left ( x\right ) p_{n-1}^{\prime }\left ( y\right )
-p_{n-1}^{\prime }\left ( x\right ) p_{n}^{\prime }\left ( y\right ) }{x-y}+%
\frac{p_{n}^{\prime }\left ( x\right ) p_{n-1}\left ( y\right ) -p_{n-1}^{\prime }\left ( x\right ) p_{n}\left ( y\right ) }{\left ( x-y\right ) ^{2}} \right.
\\
&&\left. +
\frac{p_{n-1}\left ( x\right ) p_{n}^{\prime }\left ( y\right )
-p_{n-1}^{\prime }\left ( y\right ) p_{n}\left ( x\right ) }{\left ( x-y\right )
^{2}}+2
\frac{p_{n-1}\left ( x\right ) p_{n}\left ( y\right ) -p_{n-1}\left (
y\right ) p_{n}\left ( x\right ) }{\left ( x-y\right ) ^{3}} \right).
\end{eqnarray*}%
Thus using our bounds on
$\left \{  p_{n}^{\left ( j\right ) }\right \}  $, $%
j=0,1,2$, gives for $x,y\in \left [ a,b\right ] $,
\begin{equation*}
\left \vert K_{n}^{\left ( 1,1\right ) }\left ( x,y\right ) \right
\vert \leq C_{6}\left \{  \frac{n^{2}}{\left \vert x-y\right \vert }+
\frac{n}{\left \vert x-y\right \vert ^{2}}+
\frac{1}{\left \vert x-y\right \vert ^{3}}\right \},
\end{equation*}%
and again Cauchy-Schwartz gives
\begin{equation*}
\left \vert K_{n}^{\left ( 1,1\right ) }\left ( x,y\right ) \right
\vert \leq \left ( \sum _{j=0}^{n-1}p_{j}^{\prime }\left ( x\right ) ^{2}
\right ) ^{1/2}\left ( \sum _{j=0}^{n-1}p_{j}^{\prime }\left ( x
\right ) ^{2}\right ) ^{1/2}\leq C_{7}n^{3}.
\end{equation*}%
This and the previous inequality give {(\ref{eq3.7})} for $r=s=1$. The case
$r=2,s=0$ is similar.
\end{proof}

Next, we record some universality limits. Recall that $S$ is defined by
{(\ref{eq1.3})}:%

\begin{lem}\label{lem3.3}
Let $\left [ a^{\prime },b^{\prime }\right ] $
be a subinterval in the support of $\mu $ such that
$\mu $ is absolutely continuous there, and $\mu ^{\prime }$ is
positive and continuous there. Let
$\left [ a,b\right ] \subset \left ( a^{\prime },b^{\prime }\right ) $.
Let $r,s$ be non-negative integers. Then%
\begin{enumerate}[\emph{(a)}]
\item[\emph{(a)}] Uniformly for $x\in \left [ a,b\right ] $ and $u,v$
in compact subsets of $\mathbb{C}$,
%
\begin{equation}\label{eq3.9}
\lim _{n\rightarrow \infty }
\frac{K_{n}^{\left ( r,s\right ) }\left ( x+\frac{u}{%
n\omega \left ( x\right ) },x+\frac{v}{n\omega \left ( x\right ) }\right ) }{%
K_{n}\left ( x,x\right ) }\left (
\frac{1}{n\omega \left ( x\right ) }\right ) ^{r+s}=\left ( -1\right )
^{s}S^{\left ( r+s\right ) }\left ( u-v\right ) .
\end{equation}%
\item[\emph{(b)}] Let%
%
\begin{equation}\label{eq3.10}
\tau _{r,s}=\left \{
\begin{array}{r@{\quad }r}
0, & r+s\text{ odd}
\\
\frac{\left ( -1\right ) ^{\left ( r-s\right ) /2}}{r+s+1}, & r+s
\text{ even}%
\end{array}%
\right . .
\end{equation}%
Then uniformly for $x\in \left [ a,b\right ] $,%
%
\begin{equation}\label{eq3.11}
\lim _{n\rightarrow \infty }\frac{1}{n^{r+s+1}}K_{n}^{\left ( r,s
\right ) }\left ( x,x\right ) \mu ^{\prime }\left ( x\right ) =\pi ^{r+s}
\omega \left ( x\right ) ^{r+s+1}\tau _{r,s}
\end{equation}%
and
%
\begin{equation}\label{eq3.12}
\lim _{n\rightarrow \infty }\frac{1}{n^{r+s}}
\frac{K_{n}^{\left ( r,s\right )
}\left ( x,x\right ) }{K_{n}\left ( x,x\right ) }=\left ( \pi \omega
\left ( x\right ) \right ) ^{r+s}\tau _{r,s}.
\end{equation}%
\item[\textit{(c)}] In particular, uniformly for
$x\in \left [ a,b \right ] $,
%
\begin{equation}\label{eq3.13}
\lim _{n\rightarrow \infty }\frac{1}{n^{2}}K_{n}^{\left ( 1,0\right ) }
\left ( x,x\right ) =0
\end{equation}%
and for $r=0,1$,
%
\begin{equation}\label{eq3.14}
K_{n}^{\left ( r,r\right ) }\left ( x,x\right ) \geq Cn^{2r+1}.
\end{equation}%
\item[\textit{(d)}]
%
\begin{equation}\label{eq3.15}
S^{\prime\prime }\left ( 0\right ) =-\frac{\pi ^{2}}{3}.
\end{equation}
\end{enumerate}
\end{lem}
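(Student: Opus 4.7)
The lemma packages three ingredients --- sine-kernel universality for $K_n$ at bulk scale, Totik's universality for the Christoffel function, and the Taylor expansion of $S(u)=\sin(\pi u)/(\pi u)$ --- so the plan is to bring these together and read off the formulas. For part (d), direct expansion yields $S^{(2k)}(0)=(-1)^k \pi^{2k}/(2k+1)$ and $S^{(2k+1)}(0)=0$; in particular $S''(0)=-\pi^2/3$.

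For part (a), under the standing hypotheses --- regularity of $\mu$, local absolute continuity on $[a',b']$, positivity and continuity of $\mu'$ there, and the uniform bound \eqref{eq1.9} on $\{p_n\}$ --- the classical sine-kernel universality of Lubinsky (and Totik) gives
\begin{equation*}
\lim_{n\to\infty}\frac{K_n\bigl(x+\tfrac{u}{n\omega(x)},\,x+\tfrac{v}{n\omega(x)}\bigr)}{K_n(x,x)}=S(u-v)
\end{equation*}
uniformly for $x\in[a,b]$ and for real $u,v$ on compact sets. Both sides are entire in $(u,v)$ and locally uniformly bounded (via a Bernstein-type inequality applied to the scaled polynomial in $u,v$), so a normal-families/Cauchy-integral argument promotes the convergence to uniform convergence on complex compacta together with all mixed partial derivatives. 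Applying $\partial_u^r\partial_v^s$ and invoking the chain rule produces the prefactor $(n\omega(x))^{-(r+s)}$ on the left and $(-1)^s S^{(r+s)}(u-v)$ on the right, which is exactly \eqref{eq3.9}.

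For part (b), specializing \eqref{eq3.9} to $u=v=0$ yields
\begin{equation*}
\lim_{n\to\infty}\frac{1}{n^{r+s}}\,\frac{K_n^{(r,s)}(x,x)}{K_n(x,x)} = \omega(x)^{r+s}(-1)^s S^{(r+s)}(0),
\end{equation*}
and the Taylor values from (d) convert the right-hand side to $(\pi\omega(x))^{r+s}\tau_{r,s}$, via the observation that $(-1)^{s+(r+s)/2}=(-1)^{(r-s)/2}$ when $r+s$ is even; this is \eqref{eq3.12}. To upgrade to \eqref{eq3.11}, I invoke Totik's universality for the Christoffel function, which holds under the present hypotheses,
\begin{equation*}
\lim_{n\to\infty}\frac{K_n(x,x)\,\mu'(x)}{n}=\omega(x)\qquad\text{uniformly on }[a,b],
\end{equation*}
and multiply by \eqref{eq3.12}. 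Part (c) is then immediate: \eqref{eq3.13} is \eqref{eq3.12} at $(r,s)=(1,0)$, where $\tau_{1,0}=0$; and \eqref{eq3.14} follows from \eqref{eq3.11} at $s=r$ --- where $\tau_{r,r}=1/(2r+1)>0$ --- together with the positivity and continuity of $\omega$ and $\mu'$ on $[a,b]$.

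The only non-routine step is promoting scalar sine-kernel universality to uniform convergence on complex compact sets of $(u,v)$ together with all derivatives, which is what legitimizes differentiating inside the limit. Once this is granted, everything else is explicit calculation plus the Christoffel function asymptotic.
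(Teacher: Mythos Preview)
Your proposal is correct and follows essentially the same route as the paper: real sine-kernel universality (Totik), promotion to complex compacta with derivatives, then specialization to the diagonal combined with the Christoffel-function asymptotic. The only presentational difference is that where you sketch the complex extension via normal families and a Bernstein-type bound, the paper instead invokes \cite[Theorem~1.1]{Lubinsky2008} directly (after first quoting Totik's real result \cite[Theorem~2.2]{Totik2009}), and for (b) the paper phrases the diagonal specialization as a power-series coefficient comparison rather than setting $u=v=0$ in \eqref{eq3.9}; these are equivalent.
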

\begin{proof}%
(a) We start with a result of Totik \cite[Theorem 2.2]{Totik2009}: uniformly
for $x\in \left [ a,b\right ] $, and $u,v$ in compact subsets of
$\mathbb{R}$,
%
\begin{equation}\label{eq3.16}
\lim _{n\rightarrow \infty }\frac{1}{n}K_{n}\left ( x+\frac{u}{n},x+
\frac{v}{n}%
\right ) \mu ^{\prime }\left ( x\right ) /\omega \left ( x\right ) =S
\left ( \left ( u-v\right ) \omega \left ( x\right ) \right ) .
\end{equation}%
In particular, it then follows that uniformly for
$x\in \left [ a,b\right ] $, and $u$ in compact subsets of
$\mathbb{R}$,
\begin{equation*}
\lim _{n\rightarrow \infty }
\frac{K_{n}\left ( x+\frac{u}{n},x+\frac{u}{n}%
\right ) }{K_{n}\left ( x,x\right ) }=1.
\end{equation*}%
Theorem 1.1 in \cite[p. 375]{Lubinsky2008} then asserts that uniformly
for $%
x\in \left [ a,b\right ] $, and $u,v$ in compact subsets of
$\mathbb{C}$,
\begin{equation*}
\lim _{n\rightarrow \infty }
\frac{K_{n}\left ( x+\frac{u}{K_{n}\left (
x,x\right ) \mu ^{\prime }\left ( x\right ) },x+\frac{v}{K_{n}\left ( x,x\right )
\mu ^{\prime }\left ( x\right ) }\right ) }{K_{n}\left ( x,x\right ) }=S
\left ( u-v\right ) .
\end{equation*}%
Here the uniformity and Totik's {(\ref{eq3.16})} allows us to replace
$K_{n}\left ( x,x\right ) \mu ^{\prime }\left ( x\right ) $ by
$n\omega \left ( x\right ) $: uniformly for
$x\in \left [ a,b\right ] $, and $u,v$ in compact subsets of
$%
\mathbb{C}$,
%
\begin{equation}\label{eq3.17}
\lim _{n\rightarrow \infty }
\frac{K_{n}\left ( x+\frac{u}{n\omega \left (
x\right ) },x+\frac{v}{n\omega \left ( x\right ) }\right ) }{K_{n}\left (
x,x\right ) }=S\left ( u-v\right ) .
\end{equation}%
This is the case $r=s=0$ of {(\ref{eq3.9})}. Because the limit holds uniformly for
$%
u,v $ in compact subsets of $\mathbb{C}$, we may differentiate this asymptotic
with respect to $u,v$ to get the general case of {(\ref{eq3.9})}.%

\noindent (b) For the special case where the support of $\mu $ is
$\left [ -1,1\right ]$,
this is {Corollary~\ref{cor1.3}} in \cite[p. 917]{Lubinsky2009} (see also \cite{Totik2000A}). There it was shown
that \cite[p. 937]{Lubinsky2009}%
%
\begin{equation}\label{eq3.18}
S\left ( u-v\right ) =\sum _{j,k=0}^{\infty }\frac{u^{j}}{j!}
\frac{v^{k}}{k!}%
\pi ^{j+k}\tau _{j,k},
\end{equation}%
so we can reformulate {(\ref{eq3.9})} for $r=s=0$ as
\begin{equation*}
\lim _{n\rightarrow \infty }\sum _{j,k=0}^{\infty }
\frac{\left ( \frac{u}{%
n\omega \left ( x\right ) }\right ) ^{j}}{j!}
\frac{\left ( \frac{v}{n\omega \left ( x\right ) }\right ) ^{k}}{k!}
\frac{K_{n}^{\left ( j,k\right ) }\left (
x,x\right ) }{K_{n}\left ( x,x\right ) }=\sum _{j,k=0}^{\infty }
\frac{u^{j}}{j!}%
\frac{v^{k}}{k!}\pi ^{j+k}\tau _{j,k}.
\end{equation*}%
Comparing coefficients of like powers of $u,v$ gives {(\ref{eq3.12})}.  That this
holds uniformly in $x$ for a given $r,s$ follows easily from the uniformity
of the original limit in $x$ (cf. \cite[p. 938]{Lubinsky2009}). Finally
Totik's limit {(\ref{eq3.16})} gives
\begin{equation*}
\lim _{n\rightarrow \infty }\frac{1}{n}K_{n}\left ( x,x\right ) \mu ^{
\prime }\left ( x\right ) /\omega \left ( x\right ) =1,
\end{equation*}%
uniformly for $x\in \left [ a,b\right ] $, so we also obtain the first
asymptotic {(\ref{eq3.11})}.

\noindent (c) This follows directly from (b).%

\noindent (d) From {(\ref{eq3.18})},
%
\begin{equation}\label{eq3.19}
S\left ( u\right ) =\sum _{j=0}^{\infty }\frac{u^{j}}{j!}\pi ^{j}
\tau _{j,0}.
\end{equation}%
So
$S^{\prime\prime }\left ( 0\right ) =\pi ^{2}\tau _{2,0}=-
\frac{\pi ^{2}}{3%
}$.
\end{proof}

\section{The tail term - {Lemma~\ref{lem2.3}}}
\label{sec4}

Recall that $\rho _{1},\rho _{2}$ are defined by {(\ref{eq2.4})} and {(\ref{eq2.6})}. First
write
%
\begin{equation}\label{eq4.1}
\rho _{1}\left ( x\right ) =\frac{1}{\pi K_{n+1}\left ( x,x\right ) }
\sqrt{\Psi \left ( x\right ) }
\end{equation}%
where%
%
\begin{equation}\label{eq4.2}
\Psi \left ( x\right ) =K_{n+1}^{\left ( 1,1\right ) }\left ( x,x
\right ) K_{n+1}\left ( x,x\right ) -K_{n+1}^{\left ( 0,1\right ) }
\left ( x,x\right ) ^{2}.
\end{equation}%
Next, write
%
\begin{equation}\label{eq4.3}
\rho _{2}\left ( x,y\right ) -\rho _{1}\left ( x\right ) \rho _{1}
\left ( y\right ) =T_{1}+T_{2}+T_{3},
\end{equation}%
where%
%
\begin{eqnarray}\label{eq4.4}
T_{1} &=&\frac{1}{\pi ^{2}\Delta }\left ( \sqrt{\left ( \Omega _{11}
\Omega _{22}-\Omega _{12}^{2}\right ) \Delta }-\sqrt{\Psi \left ( x
\right ) \Psi \left ( y\right ) }\right ) ;
\nonumber
\\
T_{2} &=&\frac{1}{\pi ^{2}\sqrt{\Delta }}\left \vert \Omega _{12}
\right \vert \arcsin \left (
\frac{\left \vert \Omega _{12}\right \vert }{\sqrt{\Omega _{11}\Omega _{22}}}
\right ) ;
\nonumber
\\
T_{3} &=&\frac{1}{\pi ^{2}}\left ( \frac{1}{\Delta }-
\frac{1}{K_{n+1}\left (
x,x\right ) K_{n+1}\left ( y,y\right ) }\right ) \sqrt{\Psi \left ( x
\right ) \Psi \left ( y\right ) }.
\end{eqnarray}%
We estimate each $T$ term separately. It is the following lemma that contains
the main idea, namely cancellation using Laplace's determinant formula:%

\begin{lem}\label{lem4.1}
There exist $n_{0}$ and $\Lambda _{0}>0$ such that
for $n\geq n_{0}$ and all $x,y\in \left [ a,b\right ]
$, with $\left \vert x-y\right \vert \geq \Lambda _{0}/n$,%
%
\begin{equation}\label{eq4.5}
\left \vert T_{1}\right \vert \leq
\frac{C}{\left ( \left \vert x-y\right \vert +%
\frac{1}{n}\right ) ^{2}}.
\end{equation}
\end{lem}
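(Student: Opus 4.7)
\smallskip

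\noindent\textbf{Proof plan for Lemma~\ref{lem4.1}.}

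The plan is to use Lemma~\ref{lem3.1}(e), i.e.\ $(\Omega_{11}\Omega_{22}-\Omega_{12}^{2})\Delta = \det(\Sigma)$, to rewrite
\begin{equation*}
T_{1} = \frac{1}{\pi^{2}\Delta}\left(\sqrt{\det(\Sigma)} - \sqrt{\Psi(x)\Psi(y)}\right),
\end{equation*}
and then apply the identity $|\sqrt{A}-\sqrt{B}| = |A-B|/(\sqrt{A}+\sqrt{B}) \leq |A-B|/\sqrt{B}$ with $A = \det(\Sigma) \geq 0$ (nonneg.\ semidefiniteness of the covariance matrix $\Sigma$) and $B = \Psi(x)\Psi(y) > 0$. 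The problem then splits into three subgoals: an upper bound on $|\det(\Sigma) - \Psi(x)\Psi(y)|$, and lower bounds on $\Psi(x)\Psi(y)$ and on $\Delta$.

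The heart of the matter is the upper bound on $|\det(\Sigma) - \Psi(x)\Psi(y)|$. I would perform the Laplace expansion of $\det(\Sigma)$ with respect to the two ``$x$-rows'' $I = \{1,3\}$:
\begin{equation*}
\det(\Sigma) = \sum_{|J|=2} (-1)^{s(I,J)}\, \det\Sigma[\{1,3\};J]\, \det\Sigma[\{2,4\};J^{c}].
\end{equation*}
The choice $J = \{1,3\}$ yields exactly $\Psi(x)\Psi(y)$, because $\Sigma[\{1,3\};\{1,3\}]$ is the $2\times 2$ matrix of pure $(x,x)$-entries of $K_{n+1}$ and its derivatives, with determinant $\Psi(x)$, and similarly $\Sigma[\{2,4\};\{2,4\}]$ has determinant $\Psi(y)$. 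Each of the other five choices of $J$ necessarily mixes an ``$x$-row'' with a ``$y$-column'' (or vice versa), so every product in the corresponding cofactor expansion contains at least one entry $K_{n+1}^{(r,s)}(x,y)$ or $K_{n+1}^{(r,s)}(y,x)$. By Lemma~\ref{lem3.2} each such off-diagonal entry is $O(n^{r+s}/|x-y|)$ on the region $|x-y| \geq \Lambda_{0}/n$, while pure diagonal entries are $O(n^{r+s+1})$. A short case check shows that four of the non-principal terms are each of size $O(n^{a}/|x-y|)\cdot O(n^{b}/|x-y|) = O(n^{6}/|x-y|^{2})$, while the remaining term $J=\{2,4\}$ (both minors fully mixed) is $O(n^{2}/|x-y|^{2})\cdot O(n^{2}/|x-y|^{2}) = O(n^{4}/|x-y|^{4})$, which is in turn $O(n^{6}/|x-y|^{2})$ once $|x-y| \geq 1/n$. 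Summing,
\begin{equation*}
|\det(\Sigma) - \Psi(x)\Psi(y)| \leq \frac{C n^{6}}{|x-y|^{2}}.
\end{equation*}

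For the denominator bounds, formula~(\ref{eq3.11}) together with $\tau_{0,1}=0$ and $\tau_{1,1}=1/3$ from Lemma~\ref{lem3.3} gives $\Psi(x),\Psi(y) \geq c n^{4}$ uniformly for $x,y \in [a,b]$, hence $\sqrt{\Psi(x)\Psi(y)} \geq c n^{4}$. Using~(\ref{eq3.14}) and the Lemma~\ref{lem3.2} bound $|K_{n+1}(x,y)| \leq C/|x-y| \leq Cn/\Lambda_{0}$, we also get $\Delta \geq c n^{2} - C n^{2}/\Lambda_{0}^{2} \geq (c/2) n^{2}$ once $\Lambda_{0}$ is chosen sufficiently large. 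Assembling everything,
\begin{equation*}
|T_{1}| \leq \frac{1}{\pi^{2}\Delta}\cdot\frac{|\det(\Sigma)-\Psi(x)\Psi(y)|}{\sqrt{\Psi(x)\Psi(y)}} \leq \frac{C'}{n^{2}}\cdot\frac{n^{6}/|x-y|^{2}}{n^{4}} = \frac{C''}{|x-y|^{2}},
\end{equation*}
and since $|x-y| \geq \Lambda_{0}/n$ implies $|x-y|+1/n \leq (1+1/\Lambda_{0})|x-y|$, one can replace $|x-y|$ by $|x-y|+1/n$ at the cost of an absolute constant.

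The main obstacle is the bookkeeping in the Laplace expansion: one must verify, in each of the five non-principal cases, that every monomial of the cofactor expansion picks up at least one off-diagonal factor $K_{n+1}^{(\cdot,\cdot)}(x,y)$ or $K_{n+1}^{(\cdot,\cdot)}(y,x)$, and that the combined $n$-power never exceeds $6$. No delicate cancellation is needed; the structural reason is that the only way to avoid any mixed entry is to pair $x$-rows with $x$-columns and $y$-rows with $y$-columns, which forces $J=\{1,3\}$. The tightest subcase is $J=\{2,4\}$, where both minors are entirely mixed and the hypothesis $|x-y| \geq \Lambda_{0}/n$ is essential to absorb the extra $1/|x-y|^{2}$ into the $n^{6}$ budget.
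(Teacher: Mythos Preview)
Your proposal is correct and follows essentially the same route as the paper: write $T_1$ via $(\Omega_{11}\Omega_{22}-\Omega_{12}^2)\Delta=\det\Sigma$, expand $\det\Sigma$ by Laplace along rows $\{1,3\}$ so that the $J=\{1,3\}$ term cancels $\Psi(x)\Psi(y)$, bound the five remaining $2\times2$ minor products by $O(n^6/(|x-y|+1/n)^2)$ using Lemma~\ref{lem3.2}, and combine with the lower bounds $\Delta\geq Cn^2$ and $\sqrt{\Psi(x)\Psi(y)}\geq Cn^4$. The only cosmetic difference is that the paper keeps $|x-y|+1/n$ throughout rather than converting from $|x-y|$ at the end.
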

\begin{proof}
Write
\begin{equation*}
T_{1}=
\frac{\left ( \Omega _{11}\Omega _{22}-\Omega _{12}^{2}\right ) \Delta -\Psi \left ( x\right ) \Psi \left ( y\right ) }{\pi ^{2}\Delta \left [ \sqrt{%
\left ( \Omega _{11}\Omega _{22}-\Omega _{12}^{2}\right ) \Delta }+\sqrt{\Psi \left ( x\right ) \Psi \left ( y\right ) }\right ] }=
\frac{\text{Num}}{\text{Denom}}.
\end{equation*}%
The numerator is (recall {(\ref{eq3.6})})
\begin{eqnarray*}
\text{Num} &=&\left ( \Omega _{11}\Omega _{22}-\Omega _{12}^{2}\right )
\Delta -\Psi \left ( x\right ) \Psi \left ( y\right )
\\
&=&\det \left ( \Sigma \right ) -\Psi \left ( x\right ) \Psi \left ( y
\right )
\\
&=&\det \left [
\begin{array}{c@{\quad }c@{\quad }c@{\quad }c}
K_{n+1}\left ( x,x\right ) & K_{n+1}\left ( x,y\right ) & K_{n+1}^{
\left ( 0,1\right ) }\left ( x,x\right ) & K_{n+1}^{\left ( 0,1
\right ) }\left ( x,y\right )
\\
K_{n+1}\left ( x,y\right ) & K_{n+1}\left ( y,y\right ) & K_{n+1}^{
\left ( 0,1\right ) }\left ( y,x\right ) & K_{n+1}^{\left ( 0,1
\right ) }\left ( y,y\right )
\\
K_{n+1}^{\left ( 0,1\right ) }\left ( x,x\right ) & K_{n+1}^{\left ( 0,1
\right ) }\left ( y,x\right ) & K_{n+1}^{\left ( 1,1\right ) }\left ( x,x
\right ) & K_{n+1}^{\left ( 1,1\right ) }\left ( x,y\right )
\\
K_{n+1}^{\left ( 0,1\right ) }\left ( x,y\right ) & K_{n+1}^{\left ( 0,1
\right ) }\left ( y,y\right ) & K_{n+1}^{\left ( 1,1\right ) }\left ( x,y
\right ) & K_{n+1}^{\left ( 1,1\right ) }\left ( y,y\right )%
\end{array}%
\right ]
\\
&&-\det \left [
\begin{array}{c@{\quad }c}
K_{n+1}\left ( x,x\right ) & K_{n+1}^{\left ( 0,1\right ) }\left ( x,x
\right )
\\
K_{n+1}^{\left ( 0,1\right ) }\left ( x,x\right ) & K_{n+1}^{\left ( 1,1
\right ) }\left ( x,x\right )%
\end{array}%
\right ] \det \left [
\begin{array}{c@{\quad }c}
K_{n+1}\left ( y,y\right ) & K_{n+1}^{\left ( 0,1\right ) }\left ( y,y
\right )
\\
K_{n+1}^{\left ( 0,1\right ) }\left ( y,y\right ) & K_{n+1}^{\left ( 1,1
\right ) }\left ( y,y\right )%
\end{array}%
\right ] .
\end{eqnarray*}%
Let $\Sigma $ be the $4\times 4$ matrix above. Then we can write this as
\begin{equation*}
\text{Num}=\det \left [ \Sigma \right ] -\det \left [ \Sigma \left (
\begin{array}{c@{\quad }c}
1 & 3
\\
1 & 3%
\end{array}%
\right ) \right ] \det \left [ \Sigma \left (
\begin{array}{c@{\quad }c}
2 & 4
\\
2 & 4%
\end{array}%
\right ) \right ]
\end{equation*}%
where
$\Sigma \left (
\begin{array}{c@{\quad }c}
r & s
\\
j & k%
\end{array}%
\right ) $ denotes the matrix formed from $\Sigma $ by taking the elements
that lie in rows $r,s$ and columns $j,k$. Now let us use Laplace's determinant
expansion \cite[p. 37]{LancasterTismenetsky1985}: we have chosen rows
$1,3$. Laplace's expansion gives
\begin{equation*}
\det \left ( \Sigma \right ) =\sum _{1\leq j<k\leq 4}\left ( -1
\right ) ^{1+3+j+k}\det \left [ \Sigma \left (
\begin{array}{c@{\quad }c}
1 & 3
\\
j & k%
\end{array}%
\right ) \right ] \det \left [ \Sigma ^{c}\left (
\begin{array}{c@{\quad }c}
1 & 3
\\
j & k%
\end{array}%
\right ) \right ] ,
\end{equation*}%
where $\Sigma ^{c}$ is formed from the complimentary rows and columns.
The choices for $\left ( j,k\right ) $ are
$\left ( 1,2\right ), \left ( 1,3\right ), \left ( 1,4 \right ), \left ( 2,3\right ),$ 
$\left ( 2,4\right ), \left ( 3,4\right ). $ This gives $\det \left ( \Sigma \right ) $ as a sum of 6 terms,
one of which is
$\det \left [ \Sigma \left (
\begin{array}{c@{\quad }c}
1 & 3
\\
1 & 3%
\end{array}%
\right ) \right ] \det \left [ \Sigma \left (
\begin{array}{c@{\quad }c}
2 & 4
\\
2 & 4%
\end{array}%
\right ) \right ] $. So
\begin{eqnarray*}
\text{Num}&=&-\det \left [ \Sigma \left (
\begin{array}{c@{\quad }c}
1 & 3
\\
1 & 2%
\end{array}%
\right ) \right ] \det \left [ \Sigma \left (
\begin{array}{c@{\quad }c}
2 & 4
\\
3 & 4%
\end{array}%
\right ) \right ]
\\
&&-\det \left [ \Sigma \left (
\begin{array}{c@{\quad }c}
1 & 3
\\
1 & 4%
\end{array}%
\right ) \right ] \det \left [ \Sigma \left (
\begin{array}{c@{\quad }c}
2 & 4
\\
2 & 3%
\end{array}%
\right ) \right ] -\det \left [ \Sigma \left (
\begin{array}{c@{\quad }c}
1 & 3
\\
2 & 3%
\end{array}%
\right ) \right ] \det \left [ \Sigma \left (
\begin{array}{c@{\quad }c}
2 & 4
\\
1 & 4%
\end{array}%
\right ) \right ]
\\
&&+\det \left [ \Sigma \left (
\begin{array}{c@{\quad }c}
1 & 3
\\
2 & 4%
\end{array}%
\right ) \right ] \det \left [ \Sigma \left (
\begin{array}{c@{\quad }c}
2 & 4
\\
1 & 3%
\end{array}%
\right ) \right ] -\det \left [ \Sigma \left (
\begin{array}{c@{\quad }c}
1 & 3
\\
3 & 4%
\end{array}%
\right ) \right ] \det \left [ \Sigma \left (
\begin{array}{c@{\quad }c}
2 & 4
\\
1 & 2%
\end{array}%
\right ) \right ]
\end{eqnarray*}
\begin{equation*}
=-\det \left [
\begin{array}{c@{\quad }c}
K_{n+1}\left ( x,x\right ) & K_{n+1}\left ( x,y\right )
\\
K_{n+1}^{\left ( 0,1\right ) }\left ( x,x\right ) & K_{n+1}^{\left ( 0,1
\right ) }\left ( y,x\right )%
\end{array}%
\right ] \det \left [
\begin{array}{c@{\quad }c}
K_{n+1}^{\left ( 0,1\right ) }\left ( y,x\right ) & K_{n+1}^{\left ( 0,1
\right ) }\left ( y,y\right )
\\
K_{n+1}^{\left ( 1,1\right ) }\left ( x,y\right ) & K_{n+1}^{\left ( 1,1
\right ) }\left ( y,y\right )%
\end{array}%
\right ]
\end{equation*}
\begin{equation*}
-\det \left [
\begin{array}{c@{\quad }c}
K_{n+1}\left ( x,x\right ) & K_{n+1}^{\left ( 0,1\right ) }\left ( x,y
\right )
\\
K_{n+1}^{\left ( 0,1\right ) }\left ( x,x\right ) & K_{n+1}^{\left ( 1,1
\right ) }\left ( x,y\right )%
\end{array}%
\right ] \det \left [
\begin{array}{c@{\quad }c}
K_{n+1}\left ( y,y\right ) & K_{n+1}^{\left ( 0,1\right ) }\left ( y,x
\right )
\\
K_{n+1}^{\left ( 0,1\right ) }\left ( y,y\right ) & K_{n+1}^{\left ( 1,1
\right ) }\left ( x,y\right )%
\end{array}%
\right ]
\end{equation*}
\begin{equation*}
-\det \left [
\begin{array}{c@{\quad }c}
K_{n+1}\left ( x,y\right ) & K_{n+1}^{\left ( 0,1\right ) }\left ( x,x
\right )
\\
K_{n+1}^{\left ( 0,1\right ) }\left ( y,x\right ) & K_{n+1}^{\left ( 1,1
\right ) }\left ( x,x\right )%
\end{array}%
\right ] \det \left [
\begin{array}{c@{\quad }c}
K_{n+1}\left ( x,y\right ) & K_{n+1}^{\left ( 0,1\right ) }\left ( y,y
\right )
\\
K_{n+1}^{\left ( 0,1\right ) }\left ( x,y\right ) & K_{n+1}^{\left ( 1,1
\right ) }\left ( y,y\right )%
\end{array}%
\right ]
\end{equation*}
\begin{equation*}
+\det \left [
\begin{array}{c@{\quad }c}
K_{n+1}\left ( x,y\right ) & K_{n+1}^{\left ( 0,1\right ) }\left ( x,y
\right )
\\
K_{n+1}^{\left ( 0,1\right ) }\left ( y,x\right ) & K_{n+1}^{\left ( 1,1
\right ) }\left ( x,y\right )%
\end{array}%
\right ] \det \left [
\begin{array}{c@{\quad }c}
K_{n+1}\left ( x,y\right ) & K_{n+1}^{\left ( 0,1\right ) }\left ( y,x
\right )
\\
K_{n+1}^{\left ( 0,1\right ) }\left ( x,y\right ) & K_{n+1}^{\left ( 1,1
\right ) }\left ( x,y\right )%
\end{array}%
\right ]
\end{equation*}
\begin{equation*}
-\det \left [
\begin{array}{c@{\quad }c}
K_{n+1}^{\left ( 0,1\right ) }\left ( x,x\right ) & K_{n+1}^{\left ( 0,1
\right ) }\left ( x,y\right )
\\
K_{n+1}^{\left ( 1,1\right ) }\left ( x,x\right ) & K_{n+1}^{\left ( 1,1
\right ) }\left ( x,y\right )%
\end{array}%
\right ] \det \left [
\begin{array}{c@{\quad }c}
K_{n+1}\left ( x,y\right ) & K_{n+1}\left ( y,y\right )
\\
K_{n+1}^{\left ( 0,1\right ) }\left ( x,y\right ) & K_{n+1}^{\left ( 0,1
\right ) }\left ( y,y\right )%
\end{array}%
\right ] .
\end{equation*}%
Using the estimate {(\ref{eq3.7})} and that
$\left ( \left \vert x-y\right \vert +\frac{1%
}{n}\right ) ^{-1}\leq n$, we continue this as
\begin{eqnarray*}
&=&-\det \left [
\begin{array}{c@{\quad }c}
O\left ( n\right ) & O\left (
\frac{1}{\left \vert x-y\right \vert +\frac{1}{n}}%
\right )
\\\noalign{\vspace*{3pt}}
O\left ( n^{2}\right ) & O\left (
\frac{n}{\left \vert x-y\right \vert +\frac{1}{n%
}}\right )%
\end{array}%
\right ] \det \left [
\begin{array}{c@{\quad }c}
O\left ( \frac{n}{\left \vert x-y\right \vert +\frac{1}{n}}\right ) & O
\left ( n^{2}\right )
\\\noalign{\vspace*{3pt}}
O\left ( \frac{n^{2}}{\left \vert x-y\right \vert +\frac{1}{n}}
\right ) & O\left ( n^{3}\right )%
\end{array}%
\right ]
\\
&&{}-\det \left [
\begin{array}{c@{\quad }c}
O\left ( n\right ) & O\left (
\frac{n}{\left \vert x-y\right \vert +\frac{1}{n}}%
\right )
\\\noalign{\vspace*{3pt}}
O\left ( n^{2}\right ) & O\left (
\frac{n^{2}}{\left \vert x-y\right \vert +\frac{%
1}{n}}\right )%
\end{array}%
\right ] \det \left [
\begin{array}{c@{\quad }c}
O\left ( n\right ) & O\left (
\frac{n}{\left \vert x-y\right \vert +\frac{1}{n}}%
\right )
\\\noalign{\vspace*{3pt}}
O\left ( n^{2}\right ) & O\left (
\frac{n^{2}}{\left \vert x-y\right \vert +\frac{%
1}{n}}\right )%
\end{array}%
\right ]
\\
&&{}-\det \left [
\begin{array}{c@{\quad }c}
O\left ( \frac{1}{\left \vert x-y\right \vert +\frac{1}{n}}\right ) & O
\left ( n^{2}\right )
\\\noalign{\vspace*{3pt}}
O\left ( \frac{n}{\left \vert x-y\right \vert +\frac{1}{n}}\right ) & O
\left ( n^{3}\right )%
\end{array}%
\right ] \det \left [
\begin{array}{c@{\quad }c}
O\left ( \frac{1}{\left \vert x-y\right \vert +\frac{1}{n}}\right ) & O
\left ( n^{2}\right )
\\\noalign{\vspace*{3pt}}
O\left ( \frac{n}{\left \vert x-y\right \vert +\frac{1}{n}}\right ) & O
\left ( n^{3}\right )%
\end{array}%
\right ]
\\
&&{}
+\det \left [
\begin{array}{c@{\quad }c}
O\left ( \frac{1}{\left \vert x-y\right \vert +\frac{1}{n}}\right ) & O
\left ( \frac{n}{\left \vert x-y\right \vert +\frac{1}{n}}\right )
\\\noalign{\vspace*{3pt}}
O\left ( \frac{n}{\left \vert x-y\right \vert +\frac{1}{n}}\right ) & O
\left ( \frac{n^{2}}{\left \vert x-y\right \vert +\frac{1}{n}}\right )%
\end{array}%
\right ] \det \left [
\begin{array}{c@{\quad }c}
O\left ( \frac{1}{\left \vert x-y\right \vert +\frac{1}{n}}\right ) & O
\left ( \frac{n}{\left \vert x-y\right \vert +\frac{1}{n}}\right )
\\\noalign{\vspace*{3pt}}
O\left ( \frac{n}{\left \vert x-y\right \vert +\frac{1}{n}}\right ) & O
\left ( \frac{n^{2}}{\left \vert x-y\right \vert +\frac{1}{n}}\right )%
\end{array}%
\right ]
\\
&&{}-\det \left [
\begin{array}{c@{\quad }c}
O\left ( n^{2}\right ) & O\left (
\frac{n}{\left \vert x-y\right \vert +\frac{1}{n%
}}\right )
\\\noalign{\vspace*{3pt}}
O\left ( n^{3}\right ) & O\left (
\frac{n^{2}}{\left \vert x-y\right \vert +\frac{%
1}{n}}\right )%
\end{array}%
\right ] \det \left [
\begin{array}{c@{\quad }c}
O\left ( \frac{1}{\left \vert x-y\right \vert +\frac{1}{n}}\right ) & O
\left ( n\right )
\\\noalign{\vspace*{3pt}}
O\left ( \frac{n}{\left \vert x-y\right \vert +\frac{1}{n}}\right ) & O
\left ( n^{2}\right )%
\end{array}%
\right ]
\\
&=&O\left (
\frac{n^{6}}{\left ( \left \vert x-y\right \vert +\frac{1}{n}\right )
^{2}}\right ) .
\end{eqnarray*}
Thus%
%
\begin{equation}\label{eq4.6}
\text{Num}=O\left (
\frac{n^{6}}{\left ( \left \vert x-y\right \vert +\frac{1}{n}%
\right ) ^{2}}\right ) .
\end{equation}%
Also
\begin{eqnarray*}
\text{Denom} &=&\pi ^{2}\Delta \left [ \sqrt{\left ( \Omega _{11}
\Omega _{22}-\Omega _{12}^{2}\right ) \Delta }+\sqrt{\Psi \left ( x
\right ) \Psi \left ( y\right ) }\right ]
\\
&\geq &\pi ^{2}\Delta \sqrt{\Psi \left ( x\right ) \Psi \left ( y
\right ) }.
\end{eqnarray*}%
Here from {(\ref{eq3.14})} and {(\ref{eq3.13})}, for $n$ large enough,
\begin{equation*}
\Psi \left ( x\right ) =K_{n+1}^{\left ( 1,1\right ) }\left ( x,x
\right ) K_{n}\left ( x,x\right ) -K_{n}^{\left ( 0,1\right ) }\left (
x,x\right ) ^{2}\geq Cn^{4}-o\left ( n^{4}\right ) \geq Cn^{4}.
\end{equation*}%
Also from {(\ref{eq3.14})} and {(\ref{eq3.7})},
\begin{eqnarray*}
1-\frac{\Delta }{K_{n}\left ( x,x\right ) K_{n}\left ( y,y\right ) } &=&\frac{%
K_{n}^{2}\left ( x,y\right ) }{K_{n}\left ( x,x\right ) K_{n}\left ( y,y\right ) }
\\
&\leq &\frac{C}{\left ( \left \vert x-y\right \vert +\frac{1}{n}\right )
^{2}n^{2}}
\\
&=&\frac{C}{\left ( n\left \vert x-y\right \vert +1\right ) ^{2}}
\leq \frac{1}{2}%
,
\end{eqnarray*}%
if $\left \vert x-y\right \vert \geq \Lambda _{0}/n$ with
$\Lambda _{0}$ large enough. Then%
%
\begin{equation}\label{eq4.7}
\Delta \geq \frac{1}{2}K_{n}\left ( x,x\right ) K_{n}\left ( y,y
\right ) \geq Cn^{2}
\end{equation}%
and%
%
\begin{equation}\label{eq4.8}
\text{Denom}\geq Cn^{6}.
\end{equation}%
Then combined with {(\ref{eq4.6})}, this yields
\begin{equation*}
\left \vert T_{1}\right \vert =\left \vert \frac{\text{Num}}{\text{Denom}}%
\right \vert \leq
\frac{C}{\left ( \left \vert x-y\right \vert +\frac{1}{n}%
\right ) ^{2}}.\qedhere
\end{equation*}%
\end{proof}

Next, let us deal with $T_{2}$\textup{:}%

\begin{lem}\label{lem4.2}
There exist $n_{0}$ and $\Lambda _{0}$ such that
for $n\geq n_{0}$ and all $x,y\in \left [ a,b\right ]
$, with $\left \vert x-y\right \vert \geq \Lambda _{0}/n$,%
%
\begin{equation}\label{eq4.9}
\left \vert T_{2}\right \vert \leq
\frac{C}{\left ( \left \vert x-y\right \vert +%
\frac{1}{n}\right ) ^{2}}.
\end{equation}
\end{lem}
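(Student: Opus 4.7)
The plan is to bound the $\arcsin$ factor by its argument (via convexity on $[0,1]$) and thereby reduce the estimate to size bounds on the minors appearing in Lemma~\ref{lem3.1}. Since $\Omega$ is positive semidefinite, $|\Omega_{12}|/\sqrt{\Omega_{11}\Omega_{22}}\in[0,1]$, and since $\arcsin$ is convex on $[0,1]$ with $\arcsin(0)=0$ and $\arcsin(1)=\pi/2$, the secant inequality $\arcsin(t)\leq(\pi/2)t$ holds there. Substituting into the definition of $T_2$ in \eqref{eq4.4} yields
\begin{equation*}
|T_2| \leq \frac{\Omega_{12}^{2}}{2\pi\sqrt{\Delta\,\Omega_{11}\Omega_{22}}},
\end{equation*}
so it suffices to establish (a) $|\Omega_{12}|\leq Cn^{2}/|x-y|$ and (b) $\Omega_{11},\Omega_{22}\geq cn^{3}$, uniformly in the regime $|x-y|\geq\Lambda_{0}/n$.

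For (a), I would expand the $3\times3$ determinant in \eqref{eq3.5} via the Leibniz rule and estimate each entry with Lemma~\ref{lem3.2}; the dominant product is $K_{n+1}(x,x)K_{n+1}(y,y)K_{n+1}^{(1,1)}(y,x)$, of order $n^{4}/|x-y|$, and the remaining five products are no larger once $|x-y|\geq\Lambda_{0}/n$. Combined with the lower bound $\Delta\geq cn^{2}$ from \eqref{eq4.7}, this yields (a).

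For (b), I would expand the $3\times3$ determinant in \eqref{eq3.3} along the first row. The leading cofactor is $K_{n+1}(y,y)\Psi(x)$, which is at least $cn^{5}$ by combining the bound $\Psi(x)\geq cn^{4}$ already derived inside the proof of Lemma~\ref{lem4.1} with $K_{n+1}(y,y)\geq cn$ from \eqref{eq3.14}. The two remaining cofactors are multiplied by $K_{n+1}(y,x)$ and $K_{n+1}^{(0,1)}(y,x)$, both small off the diagonal; exploiting in addition the cancellation $K_{n+1}^{(0,1)}(x,x)=o(n^{2})$ from \eqref{eq3.13} to suppress a competing summand, each contribution is at most $O(n^{5}/\Lambda_{0}^{2})$ under $|x-y|\geq\Lambda_{0}/n$ and is absorbed into half the leading term for $\Lambda_{0}$ large. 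Hence $\Delta\Omega_{11}\geq cn^{5}/2$, and since $\Delta\leq K_{n+1}(x,x)K_{n+1}(y,y)\leq Cn^{2}$, we obtain $\Omega_{11}\geq cn^{3}$; $\Omega_{22}$ is handled symmetrically.

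Assembling the pieces,
\begin{equation*}
|T_2| \leq \frac{(Cn^{2}/|x-y|)^{2}}{2\pi\sqrt{(cn^{2})(cn^{3})(cn^{3})}} \leq \frac{C'}{|x-y|^{2}},
\end{equation*}
and the hypothesis $|x-y|\geq\Lambda_{0}/n$ forces $|x-y|+1/n\leq(1+\Lambda_{0}^{-1})|x-y|$, converting this into the claimed bound in \eqref{eq4.9}. The main obstacle is step (b): the Leibniz expansion of the matrix in \eqref{eq3.3} produces several competing $O(n^{5})$ summands of indeterminate sign, and one must isolate the positive leading contribution $K_{n+1}(y,y)\Psi(x)$ and show everything else is strictly dominated. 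This is precisely where a large threshold $\Lambda_{0}$ and the odd-derivative cancellation $K_{n+1}^{(0,1)}(x,x)=o(n^{2})$ become essential.
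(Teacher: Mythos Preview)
Your proposal is correct and essentially identical to the paper's proof: both use the same secant bound $\arcsin t\le(\pi/2)t$, both bound $\Delta\Omega_{12}=O(n^{4}/(|x-y|+1/n))$ by expanding the determinant in \eqref{eq3.5} with the kernel estimates of Lemma~\ref{lem3.2}, and both establish $\Delta\Omega_{11},\Delta\Omega_{22}\ge cn^{5}$ by expanding \eqref{eq3.3}--\eqref{eq3.4} along the first row, isolating the $K_{n+1}(y,y)\Psi(x)$ term, and absorbing the remaining cofactors as $O(n^{5}/\Lambda_{0}^{2})$. The only cosmetic difference is that you divide through by $\Delta$ to state bounds on $\Omega_{12}$ and $\Omega_{jj}$ directly, whereas the paper carries the factor $\Delta$ throughout and cancels at the end.
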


\begin{proof}
Recall that
\begin{equation*}
\left \vert T_{2}\right \vert =T_{2}=
\frac{1}{\pi ^{2}\sqrt{\Delta }}%
\left \vert \Omega _{12}\right \vert \arcsin \left (
\frac{\left \vert \Omega _{12}\right \vert }{\sqrt{\Omega _{11}\Omega _{22}}}
\right ) .
\end{equation*}%
From
$\left \vert \sin u\right \vert \geq \frac{2}{\pi }\left \vert u
\right \vert $, $\left \vert u\right \vert \leq \frac{\pi }{2}$, we obtain
for $\left \vert v\right \vert \leq 1$,
\begin{equation*}
\frac{2}{\pi }\left \vert \arcsin v\right \vert \leq \left \vert v
\right \vert \end{equation*}%
so%
%
\begin{equation}\label{eq4.10}
\left \vert T_{2}\right \vert \leq \frac{1}{2\pi \Delta ^{3/2}}
\frac{%
\left \vert \Omega _{12}\Delta \right \vert ^{2}}{\sqrt{\Omega _{11}\Omega _{22}\Delta ^{2}}}.
\end{equation}%
Here from {Lemma~\ref{lem3.1}}(d) and {Lemma~\ref{lem3.2}},
\begin{eqnarray*}
\Omega _{12}\Delta &=&\det \left [
\begin{array}{c@{\quad }c@{\quad }c}
K_{n+1}\left ( x,x\right ) & K_{n+1}\left ( x,y\right ) & K_{n+1}^{
\left ( 0,1\right ) }\left ( x,x\right )
\\
K_{n+1}\left ( y,x\right ) & K_{n+1}\left ( y,y\right ) & K_{n+1}^{
\left ( 0,1\right ) }\left ( y,x\right )
\\
K_{n+1}^{\left ( 1,0\right ) }\left ( y,x\right ) & K_{n+1}^{\left ( 0,1
\right ) }\left ( y,y\right ) & K_{n+1}^{\left ( 1,1\right ) }\left ( y,x
\right )%
\end{array}%
\right ]
\\
&=&\det \left [
\begin{array}{c@{\quad }c@{\quad }c}
O\left ( n\right ) & O\left (
\frac{1}{\left \vert x-y\right \vert +\frac{1}{n}}%
\right ) & O\left ( n^{2}\right )
\\
O\left ( \frac{1}{\left \vert x-y\right \vert +\frac{1}{n}}\right ) & O
\left ( n\right ) & O\left (
\frac{n}{\left \vert x-y\right \vert +\frac{1}{n}}\right )
\\
O\left ( \frac{n}{\left \vert x-y\right \vert +\frac{1}{n}}\right ) & O
\left ( n^{2}\right ) & O\left (
\frac{n^{2}}{\left \vert x-y\right \vert +\frac{1}{n}}%
\right )%
\end{array}%
\right ]
\end{eqnarray*}%
We expand by the first row and continue this as%
%
\begin{equation}\label{eq4.11}
\Omega _{12}\Delta =O\left (
\frac{n^{4}}{\left \vert x-y\right \vert +\frac{1}{%
n}}\right ) .
\end{equation}%
Next, we examine $\Omega _{11}$ and $\Omega _{22}$. From {Lemma~\ref{lem3.1}}(b),
followed by {(\ref{eq3.7})}, {(\ref{eq3.13})},
\begin{eqnarray*}
\Omega _{11}\Delta &=&\det \left [
\begin{array}{c@{\quad }c@{\quad }c}
K_{n+1}\left ( y,y\right ) & K_{n+1}\left ( y,x\right ) & K_{n+1}^{
\left ( 0,1\right ) }\left ( y,x\right )
\\
K_{n+1}\left ( x,y\right ) & K_{n+1}\left ( x,x\right ) & K_{n+1}^{
\left ( 0,1\right ) }\left ( x,x\right )
\\
K_{n+1}^{\left ( 1,0\right ) }\left ( x,y\right ) & K_{n+1}^{\left ( 0,1
\right ) }\left ( x,x\right ) & K_{n+1}^{\left ( 1,1\right ) }\left ( x,x
\right )%
\end{array}%
\right ]
\\
&=&\det \left [
\begin{array}{c@{\quad }c@{\quad }c}
K_{n+1}\left ( y,y\right ) & O\left (
\frac{1}{\left \vert x-y\right \vert +\frac{%
1}{n}}\right ) & O\left (
\frac{n}{\left \vert x-y\right \vert +\frac{1}{n}}%
\right )
\\\noalign{\vspace*{3pt}}
O\left ( \frac{1}{\left \vert x-y\right \vert +\frac{1}{n}}\right ) & K_{n+1}
\left ( x,x\right ) & o\left ( n^{2}\right )
\\\noalign{\vspace*{3pt}}
O\left ( \frac{n}{\left \vert x-y\right \vert +\frac{1}{n}}\right ) & o
\left ( n^{2}\right ) & K_{n+1}^{\left ( 1,1\right ) }\left ( x,x
\right )%
\end{array}%
\right ] .
\end{eqnarray*}%
Expanding by the first row, and using
$K_{n+1}^{\left ( r,r\right ) }\left ( x,x\right ) =O\left ( n^{2r+1}
\right ) $, we see that
\begin{eqnarray*}
\Omega _{11}\Delta &=&K_{n+1}\left ( y,y\right ) \left \{  K_{n+1}
\left ( x,x\right ) K_{n+1}^{\left ( 1,1\right ) }\left ( x,x\right ) -o
\left ( n^{4}\right ) \right \}
\\
&&-O\left ( \frac{1}{\left \vert x-y\right \vert +\frac{1}{n}}\right )
\left \{  O\left (
\frac{n^{3}}{\left \vert x-y\right \vert +\frac{1}{n}}\right ) +o
\left ( \frac{n^{3}}{\left \vert x-y\right \vert +\frac{1}{n}}\right )
\right \}
\\
&&+O\left ( \frac{n}{\left \vert x-y\right \vert +\frac{1}{n}}\right )
\left \{  O\left (
\frac{n^{2}}{\left \vert x-y\right \vert +\frac{1}{n}}\right ) +O
\left ( \frac{n^{2}}{\left \vert x-y\right \vert +\frac{1}{n}}\right )
\right \},
\end{eqnarray*}%
so if $\left \vert x-y\right \vert \geq \Lambda _{0}/n$, and
$\Lambda _{0}\geq 1$,%
%
\begin{eqnarray}\label{eq4.12}
\Omega _{11}\Delta &=&K_{n+1}\left ( y,y\right ) K_{n+1}\left ( x,x
\right ) K_{n+1}^{\left ( 1,1\right ) }\left ( x,x\right ) -o\left ( n^{5}
\right ) +O\left ( \frac{n^{5}}{\Lambda _{0}^{2}}\right )
\nonumber
\\
&\geq &Cn^{5}-o\left ( n^{5}\right ) +O\left (
\frac{n^{5}}{\Lambda _{0}^{2}}%
\right ) \geq C_{1}n^{5}
\end{eqnarray}%
if $\Lambda _{0}$ and $n$ are large enough, say $n\geq n_{0}$, by {(\ref{eq3.13})}
and {(\ref{eq3.14})}. Of course the constant $C_{1}$ depends on the size of
$C$, and the decay of the $o\left ( n^{5}\right ) $ term, as does
$n_{0}$. In much the same way,%
%
\begin{eqnarray}\label{eq4.13}
\Omega _{22}\Delta &=&\det \left [
\begin{array}{c@{\quad }c@{\quad }c}
K_{n+1}\left ( x,x\right ) & K_{n+1}\left ( x,y\right ) & K_{n+1}^{
\left ( 0,1\right ) }\left ( x,y\right )
\\
K_{n+1}\left ( y,x\right ) & K_{n+1}\left ( y,y\right ) & K_{n+1}^{
\left ( 0,1\right ) }\left ( y,y\right )
\\
K_{n+1}^{\left ( 1,0\right ) }\left ( y,x\right ) & K_{n+1}^{\left ( 1,0
\right ) }\left ( y,y\right ) & K_{n+1}^{\left ( 1,1\right ) }\left ( y,y
\right )%
\end{array}%
\right ]
\nonumber
\\
&=&K_{n+1}\left ( x,x\right ) K_{n+1}\left ( y,y\right ) K_{n+1}^{
\left ( 1,1\right ) }\left ( y,y\right ) -o\left ( n^{5}\right ) +O
\left ( \frac{n^{5}}{%
\Lambda _{0}^{2}}\right )
\nonumber
\\
&\geq &C_{1}n^{5}.
\end{eqnarray}%
Again the threshholds $n_{0}$ and $\Lambda _{0}$ influence the choice of
$%
C_{1}$. Then combining {(\ref{eq4.10})}--{(\ref{eq4.13})}, followed by {(\ref{eq4.7})},
\begin{equation*}
T_{2}\leq C\left (
\frac{n^{4}}{\left \vert x-y\right \vert +\frac{1}{n}}%
\right ) ^{2}\frac{1}{\Delta ^{3/2}}\frac{1}{n^{5}}\leq C\left (
\frac{1}{%
\left \vert x-y\right \vert +\frac{1}{n}}\right ) ^{2}.\qedhere
\end{equation*}%
\end{proof}

Next, we handle $T_{3}$\textup{:}%

\begin{lem}\label{lem4.3}
There exist $n_{0}$ and $\Lambda _{0}$ such that
for $n\geq n_{0}$ and all $x,y\in \left [ a,b\right ] $, with
$\left \vert x-y\right \vert \geq \Lambda _{0}/n$,
%
\begin{equation}\label{eq4.14}
\left \vert T_{3}\right \vert \leq
\frac{C}{\left ( \left \vert x-y\right \vert +%
\frac{1}{n}\right ) ^{2}}.
\end{equation}
\end{lem}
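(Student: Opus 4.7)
The plan is to rewrite $T_3$ so that the difference of reciprocals becomes a single fraction, and then bound each ingredient using the kernel estimates of Lemma~\ref{lem3.2} together with the diagonal lower bounds from {(\ref{eq3.14})} and the lower bound {(\ref{eq4.7})} on $\Delta$ that is already available for $|x-y|\geq \Lambda_0/n$ with $\Lambda_0$ large. Concretely, I would start from the algebraic identity
\begin{equation*}
\frac{1}{\Delta}-\frac{1}{K_{n+1}(x,x)K_{n+1}(y,y)}=\frac{K_{n+1}(x,x)K_{n+1}(y,y)-\Delta}{\Delta\,K_{n+1}(x,x)K_{n+1}(y,y)}=\frac{K_{n+1}(x,y)^{2}}{\Delta\,K_{n+1}(x,x)K_{n+1}(y,y)},
\end{equation*}
using the definition {(\ref{eq2.7})} of $\Delta$. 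This already has $K_{n+1}(x,y)^{2}$ in the numerator, which is the source of the $1/(|x-y|+1/n)^{2}$ decay.

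Next I would estimate the three groups of factors separately. By {Lemma~\ref{lem3.2}} applied with $r=s=0$, one has $K_{n+1}(x,y)^{2}\leq C/(|x-y|+1/n)^{2}$. For the factor $\sqrt{\Psi(x)\Psi(y)}$, the definition {(\ref{eq4.2})} gives $\Psi(x)\leq K_{n+1}^{(1,1)}(x,x)K_{n+1}(x,x)$, and the diagonal cases of {Lemma~\ref{lem3.2}} yield $K_{n+1}(x,x)=O(n)$ and $K_{n+1}^{(1,1)}(x,x)=O(n^{3})$, so $\Psi(x)=O(n^{4})$ and $\sqrt{\Psi(x)\Psi(y)}=O(n^{4})$. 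Finally, {(\ref{eq4.7})} gives $\Delta\geq Cn^{2}$ and {(\ref{eq3.14})} gives $K_{n+1}(x,x)K_{n+1}(y,y)\geq Cn^{2}$, provided $n\geq n_{0}$ and $|x-y|\geq \Lambda_0/n$ with $\Lambda_0$ sufficiently large (exactly the regime under consideration).

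Putting these together gives
\begin{equation*}
|T_{3}|\leq \frac{1}{\pi^{2}}\cdot \frac{C}{(|x-y|+1/n)^{2}}\cdot\frac{1}{\Delta\,K_{n+1}(x,x)K_{n+1}(y,y)}\cdot Cn^{4}\leq \frac{C}{(|x-y|+1/n)^{2}},
\end{equation*}
since the $n^{4}$ in the numerator is absorbed by the $n^{2}\cdot n^{2}$ in the denominator. I do not anticipate any real obstacle here: the proof is essentially a bookkeeping exercise once the algebraic rearrangement above is made, and everything needed has already been established in {Lemma~\ref{lem3.2}} and in the estimates {(\ref{eq3.14})} and {(\ref{eq4.7})} derived in the proof of {Lemma~\ref{lem4.1}}. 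The only mild care needed is to verify that the thresholds $n_0$ and $\Lambda_0$ produced by those earlier lemmas can be used here unchanged, which they can since the hypothesis $|x-y|\geq \Lambda_0/n$ is identical.
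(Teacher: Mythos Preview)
Your proposal is correct and matches the paper's own proof essentially line for line: the same algebraic rewriting of the difference of reciprocals via {(\ref{eq2.7})}, the same bound $K_{n+1}(x,y)^2=O\bigl((|x-y|+1/n)^{-2}\bigr)$ from {Lemma~\ref{lem3.2}}, the estimate $\Psi=O(n^4)$, and the lower bounds $\Delta\geq Cn^2$, $K_{n+1}(x,x)K_{n+1}(y,y)\geq Cn^2$ from {(\ref{eq4.7})} and {(\ref{eq3.14})}. The only cosmetic difference is that you bound $\Psi(x)$ by simply dropping the nonnegative subtracted term, whereas the paper notes it is $o(n^4)$ via {(\ref{eq3.13})}; both yield the same $O(n^4)$.
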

\begin{proof}
Note first from {(\ref{eq4.2})}, {(\ref{eq3.13})}, and {(\ref{eq3.14})},
\begin{equation*}
\Psi \left ( x\right ) =O\left ( n^{4}\right ) -o\left ( n^{4}\right )
=O\left ( n^{4}\right ) .
\end{equation*}%
Next, recall from {(\ref{eq4.4})},
\begin{eqnarray*}
T_{3} &=&\frac{1}{\pi ^{2}}\left ( \frac{1}{\Delta }-
\frac{1}{K_{n+1}\left (
x,x\right ) K_{n+1}\left ( y,y\right ) }\right ) \sqrt{\Psi \left ( x
\right ) \Psi \left ( y\right ) }
\\
&=&\frac{1}{\pi ^{2}}
\frac{K_{n+1}^{2}\left ( x,y\right ) }{\Delta K_{n+1}\left ( x,x\right ) K_{n+1}\left ( y,y\right ) }
\sqrt{\Psi \left ( x\right ) \Psi \left ( y\right ) }
\\
&\leq &\frac{C}{\left ( \left \vert x-y\right \vert +\frac{1}{n}\right )
^{2}\Delta n^{2}}n^{4}
\\
&\leq &\frac{C}{\left ( \left \vert x-y\right \vert +\frac{1}{n}\right ) ^{2}},
\end{eqnarray*}%
by {(\ref{eq4.7})}. Note too that $T_{3}\geq 0$.
\end{proof}

\begin{proof}[Proof of {Lemma~\ref{lem2.3}}(a)]
Just combine the estimates for $T_{1},T_{2},T_{3}$ from {Lemmas~\ref{lem4.1}, \ref{lem4.2},
\ref{lem4.3}} and recall {(\ref{eq4.3})}.
\end{proof}
\begin{proof}[Proof of {Lemma~\ref{lem2.3}}(b)]%
From {Lemma~\ref{lem2.3}}(a), for $y\in \left [ a,b\right ] $,
\begin{eqnarray*}
&&\int _{\left \{  x\in \left [ a,b\right ] ,\left \vert x-y\right
\vert \geq \Lambda /n\right \}  }\left \vert \rho _{2}\left ( x,y
\right ) -\rho _{1}\left ( x\right ) \rho _{1}\left ( y\right )
\right \vert dx
\\
&\leq &\int _{\left \{  x\in \left [ a,b\right ] ,\left \vert x-y
\right \vert \geq \Lambda /n\right \}  }
\frac{C}{\left \vert x-y\right \vert ^{2}}dx
\\
&\leq &\int _{\left \{  x\in \left [ a,b\right ] ,\left \vert x-y
\right \vert \geq \Lambda /n\right \}  }
\frac{2C}{\left \vert x-y\right \vert ^{2}+\left ( \frac{%
\Lambda }{n}\right ) ^{2}}dx
\\
&\leq &\int _{-\infty }^{\infty }
\frac{2C}{\left \vert x-y\right \vert ^{2}+\left ( \frac{\Lambda }{n}\right ) ^{2}}dx.
\end{eqnarray*}%
We make the substitution $x-y=\frac{\Lambda }{n}t$ in the integral:
\begin{equation*}
=\frac{n}{\Lambda }\int _{\mathbb{-\infty }}^{\infty }
\frac{2C}{t^{2}+1}dt.
\end{equation*}%
Then {(\ref{eq2.12})} follows.
\end{proof}

\section{The central term - {Lemma~\ref{lem2.4}}}
\label{sec5}

Recall that $\Delta ,\Omega _{11},\Omega _{22},\Omega _{12}$ were defined
in {(\ref{eq2.7})}--{(\ref{eq2.10})}, while $S,F,G,H$ were defined in {(\ref{eq1.3})}--{(\ref{eq1.6})}:%

\begin{lem}\label{lem5.1}
Uniformly for $u$ in compact subsets of the plane, and
uniformly for $x\in \left [ a,b\right ] $ and
$y=x+\frac{u}{n\omega \left ( x\right ) }$,%
\begin{enumerate}[(a)]
\item[\emph{(a)}]
%
\begin{equation}\label{eq5.1}
\frac{\left ( \Omega _{11}\Omega _{22}-\Omega _{12}^{2}\right ) \Delta }{%
K_{n+1}\left ( x,x\right ) ^{4}}\left (
\frac{1}{n\omega \left ( x\right ) }%
\right ) ^{4}=F\left ( u\right ) +o\left ( 1\right ) ;
\end{equation}%
\item[\emph{(b)}]
%
\begin{equation}\label{eq5.2}
\frac{\Delta }{K_{n+1}\left ( x,x\right ) ^{2}}=1-S\left ( u\right ) ^{2}+o
\left ( 1\right ) ;
\end{equation}%
\item[\emph{(c)}] %
%
\begin{equation}\label{eq5.3}
\frac{\Delta \Omega _{11}}{K_{n+1}\left ( x,x\right ) ^{3}}\left (
\frac{1}{%
n\omega \left ( x\right ) }\right ) ^{2}=G\left ( u\right ) +o\left ( 1
\right ) ;
\end{equation}%
\item[\emph{(d)}]
%
\begin{equation}\label{eq5.4}
\frac{\Delta \Omega _{22}}{K_{n+1}\left ( x,x\right ) ^{3}}\left (
\frac{1}{%
n\omega \left ( x\right ) }\right ) ^{2}=G\left ( u\right ) +o\left ( 1
\right ) ;
\end{equation}%
\item[\emph{(e)}]
%
\begin{equation}\label{eq5.5}
\frac{\Omega _{12}\Delta }{K_{n+1}\left ( x,x\right ) ^{3}}\left (
\frac{1}{%
n\omega \left ( x\right ) }\right ) ^{2}=H\left ( u\right ) +o\left ( 1
\right ) .
\end{equation}
\end{enumerate}
\end{lem}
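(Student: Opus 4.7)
The strategy is to reduce each of (a)--(e) to an entrywise limit of a determinant. By Lemma~\ref{lem3.1}, the quantities $\Delta$, $\Delta\Omega_{11}$, $\Delta\Omega_{22}$, $\Delta\Omega_{12}$, and $(\Omega_{11}\Omega_{22}-\Omega_{12}^2)\Delta$ are all polynomial in the reproducing kernel entries $K_{n+1}^{(r,s)}$ evaluated at points of $\{x,y\}$. More precisely, part (b) is a $2\times 2$ determinant (Lemma~\ref{lem3.1}(a)), parts (c), (d), (e) are $3\times 3$ determinants (Lemma~\ref{lem3.1}(b)--(d)), and part (a) equals $\det(\Sigma)$ by Lemma~\ref{lem3.1}(e). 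The plan is to factor out $K_{n+1}(x,x)$ from each row and a power of $(n\omega(x))^{-1}$ for each differentiation index, and then invoke Lemma~\ref{lem3.3}(a) entrywise.

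For the entrywise limits, observe that with $y=x+u/(n\omega(x))$ the $12$ distinct entry types fall into the shift pattern of (3.9) with $(u_1,u_2)\in\{0,u\}^2$. A short table using (3.9), the evenness of $S$ (hence oddness of $S'$ and evenness of $S''$), and $S'(0)=0$ gives
\[
\frac{K_{n+1}^{(r,s)}(x_1,x_2)}{K_{n+1}(x,x)\,(n\omega(x))^{r+s}}\longrightarrow (-1)^{s}\,S^{(r+s)}(u_1-u_2),
\]
uniformly in $x\in[a,b]$ and in $u$ on compact subsets of $\mathbb{C}$. Assembling these entries, the $2\times 2$ matrix for $\Delta/K_{n+1}(x,x)^{2}$ has limit with diagonal $1,1$ and off-diagonal $S(u),S(u)$, yielding (b). The $3\times 3$ matrix from (3.3), after dividing its third row and third column by $n\omega(x)$ and all rows by $K_{n+1}(x,x)$, converges entrywise to the matrix in (1.5), giving (c); the analogous scaling of (3.4) yields the same matrix (using the evenness of $S$), which is (d); the matrix (3.5), similarly scaled, converges to the matrix in (1.6), which is (e). Finally, for (a), the $4\times 4$ matrix $\Sigma$, after dividing rows $3,4$ and columns $3,4$ by $n\omega(x)$ and all four rows by $K_{n+1}(x,x)$, converges entrywise to the matrix in (1.4), so $\det(\Sigma)/[K_{n+1}(x,x)^4(n\omega(x))^4]\to F(u)$, which combined with Lemma~\ref{lem3.1}(e) gives (5.1).

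Uniformity in $x\in[a,b]$ and in $u$ on compacta passes from Lemma~\ref{lem3.3}(a) to each entry, and since the determinant is a continuous polynomial in its entries, the convergence survives the assembly. The only subtlety is sign bookkeeping: the factor $(-1)^s$ in (3.9) combined with $S'(-u)=-S'(u)$ and $S''(-u)=S''(u)$ must reproduce exactly the signs appearing in $F, G, H$. This is the step most prone to error, but it is a finite check on twelve entries and is forced by the choices made in the definitions (1.4)--(1.6).
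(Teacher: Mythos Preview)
Your proposal is correct and follows essentially the same approach as the paper: express each quantity as a determinant via Lemma~\ref{lem3.1}, factor $K_{n+1}(x,x)$ out of every row and $(n\omega(x))^{-1}$ into the rows and columns carrying a derivative, then apply the universality limit (3.9) entrywise and use the parity of $S$, $S'$, $S''$ to match the matrices defining $F$, $G$, $H$. The paper's proof spells out each $2\times2$, $3\times3$, and $4\times4$ determinant explicitly (including, for part (d), the extra step of multiplying the third row and third column by $-1$ to reach $G(u)$), but the content is identical to what you outline.
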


\begin{proof}
We repeatedly use that
$\frac{K_{n+1}\left ( y,y\right ) }{K_{n+1}\left (
x,x\right ) }=1+o\left ( 1\right ) $, as follows from {(\ref{eq3.11})}.%

\noindent (a) Recall that $\Sigma $ was defined by {(\ref{eq3.1})}. Then {(\ref{eq3.6})} gives
\begin{eqnarray*}
&&\frac{\left [ \left ( \Omega _{11}\Omega _{22}-\Omega _{12}^{2}\right )
\Delta \right ] }{K_{n+1}\left ( x,x\right ) ^{4}}\left (
\frac{1}{n\omega \left ( x\right ) }\right ) ^{4}
\\
&=&\frac{\det \Sigma }{K_{n+1}\left ( x,x\right ) ^{4}}\left (
\frac{1}{n\omega \left ( x\right ) }\right ) ^{4}
\\
&=&\det \left[
\begin{array}{c@{\quad }c@{\quad }c@{\quad }c}
1 & \frac{K_{n+1}\left ( x,y\right ) }{K_{n+1}\left ( x,x\right ) } &
\frac{%
K_{n+1}^{\left ( 0,1\right ) }\left ( x,x\right ) }{K_{n+1}\left ( x,x\right ) }%
\frac{1}{n\omega \left ( x\right ) } &
\frac{K_{n+1}^{\left ( 0,1\right )
}\left ( x,y\right ) }{K_{n+1}\left ( x,x\right ) }
\frac{1}{n\omega \left (
x\right ) }
\\
\frac{K_{n+1}\left ( x,y\right ) }{K_{n+1}\left ( x,x\right ) } &
\frac{%
K_{n+1}\left ( y,y\right ) }{K_{n+1}\left ( x,x\right ) } &
\frac{%
K_{n+1}^{\left ( 0,1\right ) }\left ( y,x\right ) }{K_{n+1}\left ( x,x\right ) }%
\frac{1}{n\omega \left ( x\right ) } &
\frac{K_{n+1}^{\left ( 0,1\right )
}\left ( y,y\right ) }{K_{n+1}\left ( x,x\right ) }
\frac{1}{n\omega \left (
x\right ) }
\\
\frac{K_{n+1}^{\left ( 0,1\right ) }\left ( x,x\right ) }{K_{n+1}\left (
x,x\right ) }\frac{1}{n\omega \left ( x\right ) } &
\frac{K_{n+1}^{\left (
0,1\right ) }\left ( y,x\right ) }{K_{n+1}\left ( x,x\right ) }
\frac{1}{n\omega \left ( x\right ) } &
\frac{K_{n+1}^{\left ( 1,1\right ) }\left ( x,x\right ) }{%
K_{n+1}\left ( x,x\right ) }\left (
\frac{1}{n\omega \left ( x\right ) }\right ) ^{2} &
\frac{K_{n+1}^{\left ( 1,1\right ) }\left ( x,y\right ) }{K_{n+1}\left (
x,x\right ) }\left ( \frac{1}{n\omega \left ( x\right ) }\right ) ^{2}
\\
\frac{K_{n+1}^{\left ( 0,1\right ) }\left ( x,y\right ) }{K_{n+1}\left (
x,x\right ) }\frac{1}{n\omega \left ( x\right ) } &
\frac{K_{n+1}^{\left (
0,1\right ) }\left ( y,y\right ) }{K_{n+1}\left ( x,x\right ) }
\frac{1}{n\omega \left ( x\right ) } &
\frac{K_{n+1}^{\left ( 1,1\right ) }\left ( x,y\right ) }{%
K_{n+1}\left ( x,x\right ) }\left (
\frac{1}{n\omega \left ( x\right ) }\right ) ^{2} &
\frac{K_{n+1}^{\left ( 1,1\right ) }\left ( y,y\right ) }{K_{n+1}\left (
x,x\right ) }\left ( \frac{1}{n\omega \left ( x\right ) }\right ) ^{2}%
\end{array}%
\right ] .
\end{eqnarray*}
Here we have factored in $\frac{1}{n\omega \left ( x\right ) }$ into the
3rd and 4th rows and columns. Using {(\ref{eq3.9})} and recalling that
$y=x+\frac{u}{%
n\omega \left ( x\right ) }$, we continue this as
\begin{eqnarray*}
&=&\det \left [
\begin{array}{c@{\quad }c@{\quad }c@{\quad }c}
1 & S\left ( -u\right ) & -S^{\prime }\left ( 0\right ) & -S^{\prime }
\left ( -u\right )
\\
S\left ( -u\right ) & 1 & -S^{\prime }\left ( u\right ) & -S^{\prime }
\left ( 0\right )
\\
-S^{\prime }\left ( 0\right ) & -S^{\prime }\left ( u\right ) & -S^{
\prime\prime }\left ( 0\right ) & -S^{\prime\prime }\left ( -u
\right )
\\
-S^{\prime }\left ( -u\right ) & -S^{\prime }\left ( 0\right ) & -S^{
\prime\prime }\left ( -u\right ) & -S^{\prime\prime }\left ( 0
\right )%
\end{array}%
\right ] +o\left ( 1\right )
\\
&=&\det \left [
\begin{array}{c@{\quad }c@{\quad }c@{\quad }c}
1 & S\left ( u\right ) & 0 & S^{\prime }\left ( u\right )
\\
S\left ( u\right ) & 1 & -S^{\prime }\left ( u\right ) & 0
\\
0 & -S^{\prime }\left ( u\right ) & -S^{\prime\prime }\left ( 0
\right ) & -S^{\prime\prime }\left ( u\right )
\\
S^{\prime }\left ( u\right ) & 0 & -S^{\prime\prime }\left ( u
\right ) & -S^{\prime\prime }\left ( 0\right )%
\end{array}%
\right ] +o\left ( 1\right ) =F\left ( u\right ) +o\left ( 1\right )
\end{eqnarray*}
as $S$ is even, so $S^{\prime }$ is odd and $S^{\prime\prime }$ is even.%

\noindent (b) From {(\ref{eq3.9})},
\begin{equation*}
\frac{\Delta }{K_{n+1}\left ( x,x\right ) ^{2}}=\det \left [
\begin{array}{c@{\quad }c}
1 & \frac{K_{n+1}\left ( x,y\right ) }{K_{n+1}\left ( x,x\right ) }
\\
\frac{K_{n+1}\left ( x,y\right ) }{K_{n+1}\left ( x,x\right ) } &
\frac{%
K_{n+1}\left ( y,y\right ) }{K_{n+1}\left ( x,x\right ) }%
\end{array}%
\right ] =\det \left [
\begin{array}{c@{\quad }c}
1 & S\left ( -u\right )
\\
S\left ( -u\right ) & 1%
\end{array}%
\right ] +o\left ( 1\right ) .
\end{equation*}%
(c) From {(\ref{eq3.3})} and then {(\ref{eq3.9})},
\begin{eqnarray*}
&&\frac{\Delta \Omega _{11}}{K_{n+1}\left ( x,x\right ) ^{3}}\left (
\frac{1}{%
n\omega \left ( x\right ) }\right ) ^{2}
\\
&=&\det \left [
\begin{array}{c@{\quad }c@{\quad }c}
\frac{K_{n+1}\left ( y,y\right ) }{K_{n+1}\left ( x,x\right ) } &
\frac{%
K_{n+1}\left ( y,x\right ) }{K_{n+1}\left ( x,x\right ) } &
\frac{%
K_{n+1}^{\left ( 0,1\right ) }\left ( y,x\right ) }{K_{n+1}\left ( x,x\right ) }%
\frac{1}{n\omega \left ( x\right ) }
\\
\frac{K_{n+1}\left ( x,y\right ) }{K_{n+1}\left ( x,x\right ) } & 1 &
\frac{%
K_{n+1}^{\left ( 0,1\right ) }\left ( x,x\right ) }{K_{n+1}\left ( x,x\right ) }%
\frac{1}{n\omega \left ( x\right ) }
\\
\frac{K_{n+1}^{\left ( 1,0\right ) }\left ( x,y\right ) }{K_{n+1}\left (
x,x\right ) }\frac{1}{n\omega \left ( x\right ) } &
\frac{K_{n+1}^{\left (
0,1\right ) }\left ( x,x\right ) }{K_{n+1}\left ( x,x\right ) }
\frac{1}{n\omega \left ( x\right ) } &
\frac{K_{n+1}^{\left ( 1,1\right ) }\left ( x,x\right ) }{%
K_{n+1}\left ( x,x\right ) }\left (
\frac{1}{n\omega \left ( x\right ) }\right ) ^{2}%
\end{array}%
\right ]
\\
&=&\det \left [
\begin{array}{c@{\quad }c@{\quad }c}
1 & S\left ( u\right ) & -S^{\prime }\left ( u\right )
\\
S\left ( -u\right ) & 1 & -S^{\prime }\left ( 0\right )
\\
S^{\prime }\left ( -u\right ) & -S^{\prime }\left ( 0\right ) & -S^{
\prime\prime }\left ( 0\right )%
\end{array}%
\right ] +o\left ( 1\right )
\\
&=&\det \left [
\begin{array}{c@{\quad }c@{\quad }c}
1 & S\left ( u\right ) & -S^{\prime }\left ( u\right )
\\
S\left ( u\right ) & 1 & 0
\\
-S^{\prime }\left ( u\right ) & 0 & -S^{\prime\prime }\left ( 0
\right )%
\end{array}%
\right ] +o\left ( 1\right ) =G\left ( u\right ) +o\left ( 1\right ) ,
\end{eqnarray*}%
recall {(\ref{eq1.5})}.%

\noindent (d) From {(\ref{eq3.4})} and then {(\ref{eq3.9})},
\begin{eqnarray*}
&&\frac{\Delta \Omega _{22}}{K_{n+1}\left ( x,x\right ) ^{3}}\left (
\frac{1}{%
n\omega \left ( x\right ) }\right ) ^{2}
\\
&=&\det \left [
\begin{array}{c@{\quad }c@{\quad }c}
1 & \frac{K_{n+1}\left ( x,y\right ) }{K_{n+1}\left ( x,x\right ) } &
\frac{%
K_{n+1}^{\left ( 0,1\right ) }\left ( x,y\right ) }{K_{n+1}\left ( x,x\right ) }%
\frac{1}{n\omega \left ( x\right ) }
\\
\frac{K_{n+1}\left ( y,x\right ) }{K_{n+1}\left ( x,x\right ) } &
\frac{%
K_{n+1}\left ( y,y\right ) }{K_{n+1}\left ( x,x\right ) } &
\frac{%
K_{n+1}^{\left ( 0,1\right ) }\left ( y,y\right ) }{K_{n+1}\left ( x,x\right ) }%
\frac{1}{n\omega \left ( x\right ) }
\\
\frac{K_{n+1}^{\left ( 1,0\right ) }\left ( y,x\right ) }{K_{n+1}\left (
x,x\right ) }\frac{1}{n\omega \left ( x\right ) } &
\frac{K_{n+1}^{\left (
1,0\right ) }\left ( y,y\right ) }{K_{n+1}\left ( x,x\right ) }
\frac{1}{n\omega \left ( x\right ) } &
\frac{K_{n+1}^{\left ( 1,1\right ) }\left ( y,y\right ) }{%
K_{n+1}\left ( x,x\right ) }\left (
\frac{1}{n\omega \left ( x\right ) }\right ) ^{2}%
\end{array}%
\right ]
\\
&=&\det \left [
\begin{array}{c@{\quad }c@{\quad }c}
1 & S\left ( -u\right ) & -S^{\prime }\left ( -u\right )
\\
S\left ( u\right ) & 1 & -S^{\prime }\left ( 0\right )
\\
S^{\prime }\left ( u\right ) & S^{\prime }\left ( 0\right ) & -S^{
\prime\prime }\left ( 0\right )%
\end{array}%
\right ] +o\left ( 1\right ) =G\left ( u\right ) +o\left ( 1\right )
\end{eqnarray*}%
as $S^{\prime }$ is odd, and we can multiply both the 3rd row and 3rd column
by $-1$.%

\noindent (e) From {(\ref{eq3.5})} and then {(\ref{eq3.9})},
\begin{eqnarray*}
&&\frac{\Omega _{12}\Delta }{K_{n+1}\left ( x,x\right ) ^{3}}\left (
\frac{1}{%
n\omega \left ( x\right ) }\right ) ^{2}
\\
&=&\det \left [
\begin{array}{c@{\quad }c@{\quad }c}
1 & \frac{K_{n+1}\left ( x,y\right ) }{K_{n+1}\left ( x,x\right ) } &
\frac{%
K_{n+1}^{\left ( 0,1\right ) }\left ( x,x\right ) }{K_{n+1}\left ( x,x\right ) }%
\frac{1}{n\omega \left ( x\right ) }
\\
\frac{K_{n+1}\left ( y,x\right ) }{K_{n+1}\left ( x,x\right ) } &
\frac{%
K_{n+1}\left ( y,y\right ) }{K_{n+1}\left ( x,x\right ) } &
\frac{%
K_{n+1}^{\left ( 0,1\right ) }\left ( y,x\right ) }{K_{n+1}\left ( x,x\right ) }%
\frac{1}{n\omega \left ( x\right ) }
\\
\frac{K_{n+1}^{\left ( 1,0\right ) }\left ( y,x\right ) }{K_{n+1}\left (
x,x\right ) }\frac{1}{n\omega \left ( x\right ) } &
\frac{K_{n+1}^{\left (
0,1\right ) }\left ( y,y\right ) }{K_{n+1}\left ( x,x\right ) }
\frac{1}{n\omega \left ( x\right ) } &
\frac{K_{n+1}^{\left ( 1,1\right ) }\left ( y,x\right ) }{%
K_{n+1}\left ( x,x\right ) }\left (
\frac{1}{n\omega \left ( x\right ) }\right ) ^{2}%
\end{array}%
\right ]
\\
&=&\det \left [
\begin{array}{c@{\quad }c@{\quad }c}
1 & S\left ( -u\right ) & 0
\\
S\left ( u\right ) & 1 & -S^{\prime }\left ( u\right )
\\
S^{\prime }\left ( u\right ) & 0 & -S^{\prime\prime }\left ( u
\right )%
\end{array}%
\right ] +o\left ( 1\right ) =H\left ( u\right ) +o\left ( 1\right ) ,
\end{eqnarray*}%
recall {(\ref{eq1.6})}.
\end{proof}

Now we can obtain the asymptotics for
$\rho _{2}\left ( x,y\right ) -\rho _{1}\left ( x\right ) \rho _{1}
\left ( y\right ) $ stated in {(\ref{eq2.13})}:%

\begin{proof}[Proof of {Lemma~\ref{lem2.4}}(a)]%
Recall as in {(\ref{eq4.3})}--{(\ref{eq4.4})}, that
%
\begin{eqnarray}\label{eq5.6}
&&\left ( \frac{1}{n\omega \left ( x\right ) }\right ) ^{2}\left \{
\rho _{2}\left ( x,y\right ) -\rho _{1}\left ( x\right ) \rho _{1}
\left ( y\right ) \right \}
\nonumber
\\
&=&\left ( \frac{1}{n\omega \left ( x\right ) }\right ) ^{2}\left \{  T_{1}+T_{2}+T_{3}
\right \}  .
\end{eqnarray}%
We handle the terms $T_{j},j=1,2,3$ one by one:%

\noindent \textbf{Step 1:} $T_{1}$%

\noindent Firstly from {(\ref{eq3.9})}, {(\ref{eq3.10})}, {(\ref{eq3.15})}, and {(\ref{eq4.2})},
\begin{eqnarray*}
&&\frac{\Psi \left ( x\right ) }{K_{n+1}\left ( x,x\right ) ^{2}}
\left ( \frac{1}{%
n\omega \left ( x\right ) }\right ) ^{2}
\\
&=&\left (
\frac{K_{n+1}^{\left ( 1,1\right ) }\left ( x,x\right ) }{%
K_{n+1}\left ( x,x\right ) }-\left (
\frac{K_{n+1}^{\left ( 0,1\right ) }\left (
x,x\right ) }{K_{n+1}\left ( x,x\right ) }\right ) ^{2}\right )
\left ( \frac{1}{%
n\omega \left ( x\right ) }\right ) ^{2}
\\
&=&-S^{\prime\prime }\left ( 0\right ) +o\left ( 1\right ) =
\frac{\pi ^{2}}{3}%
+o\left ( 1\right ) .
\end{eqnarray*}%
Also, from {(\ref{eq3.9})}, uniformly for $u$ in compact subsets of
$\mathbb{C}$,
%
\begin{eqnarray}\label{eq5.7}
&&\frac{\Psi \left ( y\right ) }{K_{n+1}\left ( x,x\right ) ^{2}}
\left ( \frac{1}{%
n\omega \left ( x\right ) }\right ) ^{2}
\nonumber
\\
&=&\left (
\frac{K_{n+1}^{\left ( 1,1\right ) }\left ( y,y\right ) }{%
K_{n+1}\left ( x,x\right ) }
\frac{K_{n+1}\left ( y,y\right ) }{K_{n+1}\left (
x,x\right ) }-\left (
\frac{K_{n+1}^{\left ( 0,1\right ) }\left ( y,y\right ) }{%
K_{n+1}\left ( x,x\right ) }\right ) ^{2}\right ) \left (
\frac{1}{n\omega \left (
x\right ) }\right ) ^{2}
\nonumber
\\
&=&-S^{\prime\prime }\left ( 0\right ) +o\left ( 1\right ) =
\frac{\pi ^{2}}{3}%
+o\left ( 1\right ) .
\end{eqnarray}%
Then
\begin{eqnarray*}
&&\left ( \frac{1}{n\omega \left ( x\right ) }\right ) ^{4}
\frac{\Psi \left (
x\right ) \Psi \left ( y\right ) }{\Delta ^{2}}
\\
&=&\frac{K_{n+1}\left ( x,x\right ) ^{4}}{\Delta ^{2}}\left [
\frac{\Psi \left (
x\right ) }{K_{n+1}\left ( x,x\right ) ^{2}}\left (
\frac{1}{n\omega \left (
x\right ) }\right ) ^{2}\right ] \left [
\frac{\Psi \left ( y\right ) }{%
K_{n+1}\left ( x,x\right ) ^{2}}\left (
\frac{1}{n\omega \left ( x\right ) }%
\right ) ^{2}\right ]
\\
&=&\frac{1}{\left ( 1-S\left ( u\right ) ^{2}\right ) ^{2}}\left (
\frac{\pi ^{2}%
}{3}\right ) ^{2}+o\left ( 1\right ) ,
\end{eqnarray*}%
by the above and {Lemma~\ref{lem5.1}}(b). Hence also with an obvious choice of branches,
uniformly for $u$ in compact subsets of
$\mathbb{C}\backslash \left \{  0\right \}  $,
%
\begin{equation}\label{eq5.8}
\left ( \frac{1}{n\omega \left ( x\right ) }\right ) ^{2}
\frac{1}{\Delta }\sqrt{%
\Psi \left ( x\right ) \Psi \left ( y\right ) }=
\frac{1}{1-S\left ( u\right ) ^{2}}%
\left ( \frac{\pi ^{2}}{3}\right ) +o\left ( 1\right ) .
\end{equation}%
(Note that $\Delta $ occurs outside the square root, and only it leads
to the pole at $0$). Then from {(\ref{eq5.1})} and {(\ref{eq5.8})}, and recalling the definition
of $T_{1}$ at {(\ref{eq4.4})},
\begin{eqnarray*}
&&\left ( \frac{1}{n\omega \left ( x\right ) }\right ) ^{2}T_{1}
\\
&=&\frac{K_{n+1}\left ( x,x\right ) ^{2}}{\pi ^{2}\Delta }\sqrt{
\frac{\left (
\Omega _{11}\Omega _{22}-\Omega _{12}^{2}\right ) \Delta }{K_{n+1}\left (
x,x\right ) ^{4}\left ( n\omega \left ( x\right ) \right ) ^{4}}}-
\left ( \frac{1}{%
n\omega \left ( x\right ) }\right ) ^{2}\frac{1}{\pi ^{2}\Delta }
\sqrt{\Psi \left ( x\right ) \Psi \left ( y\right ) }
\\
&=&\frac{1}{\pi ^{2}\left ( 1-S\left ( u\right ) ^{2}\right ) }\left (
\sqrt{%
F\left ( u\right ) }-\frac{\pi ^{2}}{3}\right ) +o\left ( 1\right ) .
\end{eqnarray*}%
\textbf{Step 2:} $T_{2}$%

\noindent From {(\ref{eq4.4})},
\begin{eqnarray*}
&&\ \left ( \frac{1}{n\omega \left ( x\right ) }\right ) ^{2}T_{2}
\\
&=&\frac{K_{n+1}\left ( x,x\right ) ^{3}}{\pi ^{2}\Delta ^{3/2}}
\left [ \frac{%
\Omega _{12}\Delta }{K_{n+1}\left ( x,x\right ) ^{3}}\left (
\frac{1}{n\omega \left ( x\right ) }\right ) ^{2}\right ] \arcsin
\left ( \frac{\Omega _{12}}{%
\sqrt{\Omega _{11}\Omega _{22}}}\right )
\\
&=&\frac{1}{\pi ^{2}\left ( 1-S\left ( u\right ) ^{2}\right ) ^{3/2}}H
\left ( u\right ) \arcsin \left (
\frac{H\left ( u\right ) }{G\left ( u\right ) }\right ) +o\left ( 1
\right ) ,
\end{eqnarray*}%
by {(\ref{eq5.2})}--{(\ref{eq5.5})}.%

\noindent \textbf{Step 3: }$T_{3}$%

\noindent From {(\ref{eq4.4})} and {(\ref{eq5.5})},
\begin{eqnarray*}
&&\left ( \frac{1}{n\omega \left ( x\right ) }\right ) ^{2}T_{3}
\\
&=&\left ( \frac{1}{n\omega \left ( x\right ) }\right ) ^{2}
\frac{1}{\pi ^{2}}%
\left (
\frac{K_{n+1}\left ( x,y\right ) ^{2}}{\Delta K_{n+1}\left ( x,x\right )
K_{n+1}\left ( y,y\right ) }\right ) \sqrt{\Psi \left ( x\right )
\Psi \left ( y\right ) }
\\
&=&\frac{1}{\pi ^{2}}\left (
\frac{K_{n+1}\left ( x,y\right ) }{K_{n+1}\left (
x,x\right ) }\right ) ^{2}
\frac{K_{n+1}\left ( x,x\right ) }{K_{n+1}\left (
y,y\right ) }\left [
\frac{1}{\left ( n\omega \left ( x\right ) \right )
^{2}\Delta }\sqrt{\Psi \left ( x\right ) \Psi \left ( y\right ) }
\right ]
\\
&=&\frac{1}{\pi ^{2}}\left (
\frac{S\left ( u\right ) ^{2}}{1-S\left ( u\right )
^{2}}\right ) \frac{\pi ^{2}}{3}+o\left ( 1\right ) ,
\end{eqnarray*}%
by {(\ref{eq5.8})} and {(\ref{eq3.9})}. Substituting the asymptotics for $T_{j},j=1,2,3$ into
{(\ref{eq5.6})} gives
\begin{eqnarray*}
&&\left ( \frac{1}{n\omega \left ( x\right ) }\right ) ^{2}\left \{
\rho _{2}\left ( x,y\right ) -\rho _{1}\left ( x\right ) \rho _{1}
\left ( y\right ) \right \}
\\
&=&\frac{1}{\pi ^{2}\left ( 1-S\left ( u\right ) ^{2}\right ) }\left
\{  \sqrt{%
F\left ( u\right ) }-\frac{\pi ^{2}}{3}\left ( 1-S\left ( u\right ) ^{2}
\right ) +%
\frac{H\left ( u\right ) }{\sqrt{1-S\left ( u\right ) ^{2}}}\arcsin
\left ( \frac{%
H\left ( u\right ) }{G\left ( u\right ) }\right ) \right \}  +o\left ( 1
\right )
\\
&=&\Xi \left ( u\right ) +o\left ( 1\right ) ,
\end{eqnarray*}%
recall {(\ref{eq1.7})}.
\end{proof}

We next deal with $u$ near $0$, which turns out to be challenging. First,
we prove%

\begin{lem}\label{lem5.2}
\begin{enumerate}[(a)]
\item[\emph{(a)}] $\Delta \left ( x,x+\frac{u}{n\omega \left ( x\right ) }
\right ) $
 has a double zero at $u=0$, and there is $\rho >0$
 such that for all $x\in \left [ a,b\right ] $ and $n$
 large enough, $\Delta \left ( x,x+
\frac{u}{n\omega \left ( x\right )
}\right ) $ has no other zeros in
$\left \vert u\right \vert \leq \rho $.
Moreover, uniformly for $u$ in compact subsets of
 $\mathbb{C}$, and $x\in \left [ a,b\right ] $,
 %
\begin{equation}\label{eq5.9}
\lim _{n\rightarrow \infty }
\frac{\Delta \left ( x,x+\frac{u}{n\omega \left (
x\right ) }\right ) }{K_{n+1}\left ( x,x\right ) ^{2}u^{2}}=
\frac{1-S\left (
u\right ) ^{2}}{u^{2}}.
\end{equation}%
The right-hand side is interpreted as its limiting value at $u=0$.

\item[\emph{(b)}] $\left [ \left ( \Omega _{11}\Omega _{22}-\Omega _{12}^{2}
\right ) \Delta \right ] \left ( x,x+
\frac{u}{n\omega \left ( x\right ) }\right ) $
has a zero of even order at least $4$ at $u=0$.
Moreover, uniformly for $u$ in compact subsets of
$\mathbb{C}$, and $x\in \left [ a,b\right ] $,
\begin{equation*}
\lim _{n\rightarrow \infty }
\frac{\left ( \Omega _{11}\Omega _{22}-\Omega _{12}^{2}\right ) }{\Delta }
\left ( \frac{1}{n\omega \left ( x\right ) }\right ) ^{4}=
\frac{F\left ( u\right ) }{\left ( 1-S\left ( u\right ) ^{2}\right ) ^{2}}.
\end{equation*}%
The right-hand side is interpreted as its limiting value at
$u=0$.
\end{enumerate}
\end{lem}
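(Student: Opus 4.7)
My plan is to exploit the Gram-matrix representation of $\Sigma$ together with the universality asymptotics already established in {Lemma~\ref{lem5.1}}. Set $v_x^{(j)} := (p_0^{(j)}(x),\dots,p_n^{(j)}(x))^{T} \in \mathbb{R}^{n+1}$ and similarly $v_y^{(j)}$. Each entry of $\Sigma$ in {(\ref{eq3.1})} is an inner product $\langle v_x^{(r)}, v_y^{(s)}\rangle$, so $\Sigma = V^{T}V$ with $V = [v_x, v_y, v_x', v_y']$. By {Lemma~\ref{lem3.1}}(e) and the Cauchy--Binet formula,
\[ (\Omega_{11}\Omega_{22}-\Omega_{12}^{2})\Delta \;=\; \det\Sigma \;=\; \sum_{|I|=4}(\det V_I)^{2}, \]
where $V_I$ is the $4\times 4$ submatrix of $V$ with rows indexed by $I\subset\{0,\dots,n\}$. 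In particular, $\det\Sigma \geq 0$ for real arguments, so its Taylor expansion in $u$ at $u=0$ must have even vanishing order.

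For part (a), I would first note that $\Delta(x,x)=0$ and $\partial_y\Delta|_{y=x}=0$ by direct computation from $\Delta = K_{n+1}(x,x)K_{n+1}(y,y)-K_{n+1}(x,y)^{2}$, so $\Delta(x,x+u/(n\omega(x)))$ is a polynomial in $u$ with vanishing order $\geq 2$ at $u=0$. The limit {(\ref{eq5.2})} gives $\Delta/K_{n+1}^{2} = 1-S(u)^{2} + o(1)$ uniformly in $x\in[a,b]$ and $u$ in compact subsets of $\mathbb{C}$. Since both the prelimit and the limit are holomorphic in $u$ with zeros of order $\geq 2$ at the origin, a Cauchy estimate on a small circle $|u|=r$ shows that division by $u^{2}$ preserves uniform convergence, establishing {(\ref{eq5.9})}. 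Because $(1-S(u)^{2})/u^{2}$ is holomorphic and equal to $\pi^{2}/3\neq 0$ at $u=0$, it is bounded below in modulus on some closed disk $|u|\leq\rho$; uniform convergence then forces $\Delta/(K_{n+1}^{2}u^{2})$ to be nonvanishing there for all large $n$ uniformly in $x$, yielding the no-other-zeros statement.

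For part (b), the core step is to show $\det\Sigma$ has a zero of order at least $4$ at $u=0$. Setting $a=y-x$ and Taylor expanding, $v_y-v_x = \sum_{j\geq 1}(a^{j}/j!)\,v_x^{(j)}$ and $v_y'-v_x' = \sum_{j\geq 1}(a^{j}/j!)\,v_x^{(j+1)}$. Replacing columns $2$ and $4$ of $V_I$ by these differences does not change $\det V_I$, and multilinearity yields
\[ \det V_I \;=\; \sum_{j,k\geq 1}\frac{a^{j+k}}{j!\,k!}\,\det\bigl[v_x,\,v_x^{(j)},\,v_x',\,v_x^{(k+1)}\bigr]_I. \]
Each determinant on the right vanishes unless $j\geq 2$, $k\geq 1$, and $k+1\neq j$, so the minimal surviving $j+k$ is $4$. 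Hence $\det V_I = O(a^{4})$, $(\det V_I)^{2}=O(u^{8})$, and therefore $\det\Sigma$ vanishes to order $\geq 8 \geq 4$ at $u=0$ (even, by the sum-of-squares structure).

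With the order information secured, the limit formula in (b) follows from {Lemma~\ref{lem5.1}}(a) and (b) via
\[ \frac{\Omega_{11}\Omega_{22}-\Omega_{12}^{2}}{\Delta}\left(\frac{1}{n\omega(x)}\right)^{4} = \frac{\det\Sigma/\bigl(K_{n+1}^{4}(n\omega(x))^{-4}\bigr)}{(\Delta/K_{n+1}^{2})^{2}} \longrightarrow \frac{F(u)}{(1-S(u)^{2})^{2}}. \]
For $u\neq 0$ this is immediate from {(\ref{eq5.1})} and {(\ref{eq5.2})}. Near $u=0$, $F$ inherits vanishing order $\geq 4$ as the uniform limit of polynomials with that property, while $(1-S(u)^{2})^{2}$ vanishes to order exactly $4$, so the ratio extends holomorphically to $u=0$; uniform convergence across the origin follows from the same Cauchy estimate applied now after division by $u^{4}$. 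The principal obstacle is the order-$\geq 4$ vanishing of $\det\Sigma$: a naive rank or row/column-equality count gives only order $\geq 2$, and the extra factor of $u^{2}$ is supplied precisely by the Gram sum-of-squares structure combined with the multilinear cancellations in the display above.
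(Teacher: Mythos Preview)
Your argument is correct, and part (a) follows the same line as the paper: both of you observe that $\Delta$ vanishes to order at least $2$ at $u=0$ (you by direct differentiation, the paper by non-negativity), combine this with the uniform limit {(\ref{eq5.2})}, and then pass the $u^{2}$ factor through using standard complex-analytic reasoning (Hurwitz in the paper, a Cauchy estimate in your version).

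Part (b) is where you diverge. The paper obtains the order-$\ge 4$ vanishing of $\det\Sigma$ by invoking the Fischer-type inequality
\[
0\le \det\Sigma \;\le\; \Delta(x,y)\,\det\begin{pmatrix}K_{n+1}^{(1,1)}(x,x)&K_{n+1}^{(1,1)}(x,y)\\K_{n+1}^{(1,1)}(x,y)&K_{n+1}^{(1,1)}(y,y)\end{pmatrix}
\]
for positive semidefinite block matrices, noting that each factor on the right vanishes at $u=0$; together with the even-order constraint this forces multiplicity $\ge 4$. Your route is different: you use Cauchy--Binet to write $\det\Sigma=\sum_{|I|=4}(\det V_I)^2$, Taylor-expand the columns of $V_I$, and exploit repeated-column cancellations to show each $\det V_I$ vanishes to order $\ge 4$ in $a=y-x$, hence $\det\Sigma$ vanishes to order $\ge 8$. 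This is genuinely stronger than what the paper proves (and is consistent with the limit, since one can check $F(u)=O(u^8)$ via $(1-S^2)F=G^2-H^2$ and the expansions of $G,H$). Your approach is self-contained and avoids the external matrix inequality, at the cost of a longer multilinear bookkeeping step; the paper's approach is shorter but relies on the cited determinantal inequality. Either suffices, since only order $\ge 4$ is needed to cancel the order-$4$ zero of $\Delta^2$ and make the ratio holomorphic at $u=0$.
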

\begin{proof}
(a) First,
\begin{eqnarray*}
&&\Delta \left ( x,x+\frac{u}{n\omega \left ( x\right ) }\right )
\\
&=&K_{n+1}\left ( x,x\right ) K_{n+1}\left ( x+
\frac{u}{n\omega \left ( x\right )
},x+\frac{u}{n\omega \left ( x\right ) }\right ) -K_{n+1}\left ( x,x+
\frac{u}{%
n\omega \left ( x\right ) }\right ) ^{2}
\end{eqnarray*}%
is a polynomial in $u$, and by Cauchy-Schwarz is non-negative for real
$u$, with a zero at $u=0$. This then must be a zero of even multiplicity.
But since
\begin{equation*}
\lim _{n\rightarrow \infty }
\frac{\Delta \left ( x,x+\frac{u}{n\omega \left (
x\right ) }\right ) }{K_{n+1}\left ( x,x\right ) ^{2}}=1-S\left ( u
\right ) ^{2},
\end{equation*}%
uniformly in compact sets by {Lemma~\ref{lem5.1}}(b) and {(\ref{eq3.9})}, and the right-hand
side has an isolated double zero at $0$, it follows from Hurwitz' Theorem
and the considerations above, that necessarily for large enough $n$,
$\Delta \left ( x,x+\frac{u}{n\omega \left ( x\right ) }\right ) $ has
a double zero at $0$, and no other zeros in some neighborhood of $0$ that
is independent of $n$. Since the convergence is uniform in $x$, the neighborhood
may also be taken independent of $x$. But then
$\left \{
\frac{\Delta \left ( x,x+\frac{u}{%
n\omega \left ( x\right ) }\right ) }{K_{n+1}\left ( x,x\right ) ^{2}u^{2}}%
\right \}  _{n\geq 1}$ is a sequence of polynomials in $u$ that converges
uniformly in compact subsets of
$\mathbb{C}\backslash \left \{  0\right \}  $ and hence also in compact subsets
of $\mathbb{C}$.%

\noindent (b) Recall {(\ref{eq3.6})}:
\begin{equation*}
\left ( \Omega _{11}\Omega _{22}-\Omega _{12}^{2}\right ) \Delta =
\det \left ( \Sigma \right ) .
\end{equation*}%
Here $\mathrm{det}\left ( \Sigma \right ) $ is also a polynomial in $u$ when
$y=x+\frac{u}{n\omega \left ( x\right ) }$. As in the proof of {Lemma~\ref{lem2.2}} in the Appendix,
$\Sigma $ is a positive definite matrix when $x\neq y$, so is nonegative
definite for all $x,y$. Then $\mathrm{det}\left ( \Sigma \right ) \geq 0$ for real
$x,y$ while $\det \left ( \Sigma \right ) =0$ when $u=0$. Thus as a polynomial
in $u$, $\mathrm{det}\left ( \Sigma \right ) $ can only have an even multiplicity zero
at $u=0 $. We need to show that it has a zero of multiplicity at least
$4$ when $u=0$. By a classical inequality for determinants of positive definite matrices
and their leading submatrices
\cite[p. 63, Thm. 7]{BeckenbachBellman1961}, when $y$ is real,
\begin{equation*}
0\leq \det \left ( \Sigma \right ) \leq \Delta \left ( x,y\right )
\det \left [
\begin{array}{c@{\quad }c}
K_{n+1}^{\left ( 1,1\right ) }\left ( x,x\right ) & K_{n+1}^{\left ( 1,1
\right ) }\left ( x,y\right )
\\
K_{n+1}^{\left ( 1,1\right ) }\left ( x,y\right ) & K_{n+1}^{\left ( 1,1
\right ) }\left ( y,y\right )%
\end{array}%
\right ] .
\end{equation*}%
We already know that $\Delta $ has a double zero at $u=0$ for
$y=x+\frac{u}{%
n\omega \left ( x\right ) }$. But the second determinant also vanishes
when $%
y=x$, that is $u=0$. It follows that necessarily as a polynomial in
$u$, $%
\det \left ( \Sigma \right ) $ has a zero of multiplicity at least
$4$ at $u=0$. Then
\begin{equation*}
\frac{\Omega _{11}\Omega _{22}-\Omega _{12}^{2}}{\Delta }=
\frac{\det \left (
\Sigma \right ) }{\Delta ^{2}}
\end{equation*}%
has a removable singularity at $0$, since the zero of multiplicity
$4$ in the denominator is cancelled by the zero of multiplicity
$\geq 4$ in the numerator. Then from {(\ref{eq5.1})}, {(\ref{eq5.2})}, uniformly for
$x\in \left [ a,b\right ] $ and $u$ in some neighborhood of $0$,
\begin{eqnarray*}
&&\frac{\Omega _{11}\Omega _{22}-\Omega _{12}^{2}}{\Delta }\left (
\frac{1}{%
n\omega \left ( x\right ) }\right ) ^{4}
\\
&=&\frac{\left ( \Omega _{11}\Omega _{22}-\Omega _{12}^{2}\right ) \Delta }{%
K_{n+1}\left ( x,x\right ) ^{4}}\left (
\frac{1}{n\omega \left ( x\right ) }%
\right ) ^{4}\left [ \frac{K_{n+1}\left ( x,x\right ) ^{2}}{\Delta }
\right ] ^{2}
\\
&=&\frac{F\left ( u\right ) }{\left ( 1-S\left ( u\right ) ^{2}\right ) ^{2}}%
+o\left ( 1\right ) .
\end{eqnarray*}%
Moreover, since $S\left ( u\right ) =1$ only at $u=0$, this limit actually
holds uniformly for $u$ in compact subsets of $\mathbb{C}$.
\end{proof}

Next, we deal with the most difficult term $\Omega _{12}$:%

\begin{lem}\label{lem5.3}
$\left ( \Omega _{12}\Delta \right ) \left ( x,x+
\frac{u}{n\omega \left (
x\right ) }\right ) $ has a zero of multiplicity at least $3$ at
$u=0$. Moreover, uniformly for $u$ in compact subsets
of $\mathbb{R}$, and $x\in \left [ a,b\right ] $,
\begin{equation*}
\lim _{n\rightarrow \infty }\frac{\Omega _{12}}{\sqrt{\Delta }}\left (
\frac{1%
}{n\omega \left ( x\right ) }\right ) ^{2}=
\frac{H\left ( u\right ) }{\left (
1-S^{2}\left ( u\right ) \right ) ^{3/2}}.
\end{equation*}%
The right-hand side is interpreted as its limiting value at $u=0$.
In addition, uniformly for $u$ in compact subsets of
$\mathbb{R}$, and $x\in \left [ a,b\right ] $,
\begin{equation*}
\frac{\left \vert \Omega _{12}\right \vert }{\sqrt{\Delta }}\arcsin
\left (
\frac{\left \vert \Omega _{12}\right \vert }{\sqrt{\Omega _{11}\Omega _{22}}}%
\right ) \left ( \frac{1}{n\omega \left ( x\right ) }\right ) ^{2}
\leq C.
\end{equation*}%
\end{lem}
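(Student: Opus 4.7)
The plan is to reduce the claim to Lemma~5.1 via the identity $\Omega_{12}/\sqrt{\Delta} = (\Omega_{12}\Delta)/\Delta^{3/2}$; the main preparation is establishing the multiplicity claim. Starting from the determinantal formula for $\Omega_{12}\Delta$ in Lemma~3.1(d), I perform the row operation R2 $\to$ R2 $-$ R1. Because R1 and R2 coincide at $y = x$ (using $K_{n+1}(x,y)|_{y=x} = K_{n+1}(x,x)$ and $K_{n+1}^{(0,1)}(y,x)|_{y=x} = K_{n+1}^{(0,1)}(x,x)$), each entry of the new R2 is divisible by $(y-x)$, extracting a first factor. In the resulting matrix (with R2 scaled by $1/(y-x)$) I next perform C2 $\to$ C2 $-$ C1. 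Each entry of the new C2 is again divisible by $(y-x)$: the R1 entry is $K_{n+1}(x,y) - K_{n+1}(x,x) = O(y-x)$; the R2 entry is the divided second difference $(K_{n+1}(y,y) - 2K_{n+1}(x,y) + K_{n+1}(x,x))/(y-x)$, whose numerator vanishes to order $2$; and the R3 entry is $K_{n+1}^{(0,1)}(y,y) - K_{n+1}^{(1,0)}(y,x)$, which vanishes at $y = x$ thanks to the symmetry $K_{n+1}^{(1,0)}(x,x) = K_{n+1}^{(0,1)}(x,x)$. This extracts a second factor. Evaluating the twice-reduced matrix at $y = x$ yields a $3\times 3$ matrix whose columns $2$ and $3$ both equal $(K_{n+1}^{(0,1)}(x,x), K_{n+1}^{(1,1)}(x,x), K_{n+1}^{(1,1)}(x,x))^T$; its determinant vanishes, providing the third factor.

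For the limit formula, set $H_n(u) = (\Omega_{12}\Delta)(x, x+u/(n\omega(x)))/K_{n+1}(x,x)^3 \cdot (n\omega(x))^{-2}$ and $D_n(u) = \Delta(x,x+u/(n\omega(x)))/K_{n+1}(x,x)^2$; by Lemma~5.1(e) and Lemma~5.1(b), these polynomials converge uniformly on compact subsets of $\mathbb{C}$ to $H(u)$ and $1-S(u)^2$ respectively. By part (1), $u^3$ divides $H_n$ for every $n$, and direct Taylor expansion (using $S(u) = 1 - \pi^2 u^2/6 + O(u^4)$, $S'(u) = -\pi^2 u/3 + O(u^3)$, $S''(u) = -\pi^2/3 + O(u^2)$) shows that $H(u) = -\pi^6 u^4/135 + O(u^6)$, so $u^3$ divides $H$; and $u^2$ divides both $D_n$ and $1-S^2$. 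Define the polynomial quotients $q_n(u) = H_n(u)/u^3$ and $d_n(u) = D_n(u)/u^2$; by the Cauchy integral formula applied on a fixed circle around $0$, the uniform convergence $H_n \to H$ and $D_n \to 1-S^2$ on compact subsets of $\mathbb{C}$ transfers to $q_n \to H/u^3$ and $d_n \to (1-S^2)/u^2$ uniformly on compact subsets of $\mathbb{C}$. The limit $(1-S(u)^2)/u^2$ is continuous and positive on $\mathbb{R}$ (with value $\pi^2/3$ at $0$), hence bounded below by a positive constant on each compact $K \subset \mathbb{R}$; so is $d_n$ on $K$ for large $n$. For real $u$ a straightforward computation then gives
\begin{equation*}
\frac{\Omega_{12}}{\sqrt{\Delta}} \cdot \frac{1}{(n\omega(x))^2} = \operatorname{sign}(u) \cdot \frac{q_n(u)}{d_n(u)^{3/2}},
\end{equation*}
and the right-hand side converges uniformly on $K$ to $\operatorname{sign}(u) \cdot (H(u)/u^3)/((1-S^2(u))/u^2)^{3/2}$. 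Using $(1-S(u)^2)^{3/2} = |u|^3 ((1-S^2(u))/u^2)^{3/2}$ and $|u|^3/u^3 = \operatorname{sign}(u)$ for real $u$ in a neighborhood of $0$, this simplifies to $H(u)/(1-S(u)^2)^{3/2}$.

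The bound in the third assertion is then immediate from part (2) and $|\arcsin v| \leq \pi/2$: the left-hand side is at most $(\pi/2)|\Omega_{12}|/\sqrt{\Delta} \cdot (n\omega(x))^{-2}$, which is uniformly bounded on $K$ because it converges uniformly to the continuous (and hence bounded on compacts) function $|H|/(1-S^2)^{3/2}$. The main obstacle is part (1): while the first two factors of $(y-x)$ come out by elementary row and column subtractions, the third factor rests on the algebraic coincidence that, after those reductions, the residual matrix has two identical columns at $y = x$ --- a cancellation that is essential for handling the delicate boundary case $u = 0$ in the limit of part (2).
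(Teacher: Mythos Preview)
Your proof is correct and follows essentially the same template as the paper for the limit and the $\arcsin$ bound. The one genuine difference is in how you establish the multiplicity $\geq 3$ at $u=0$.

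The paper performs \emph{three} operations (R2 $\to$ R2 $-$ R1, then C2 $\to$ C2 $-$ C1, then R3 $\to$ R3 $-\frac{1}{y-x}$R2) and then invokes the quantitative estimates of Lemma~3.2 to bound every entry as $O(n^k(y-x)^j)$, finally extracting $(y-x)$ from each of R2, R3, and C2 to obtain $\Omega_{12}\Delta = O(n^8(y-x)^3)$. Your argument stops after two operations and instead evaluates the twice-reduced matrix at $y=x$, observing that columns 2 and 3 coincide (both equal to $(K_{n+1}^{(0,1)}(x,x),K_{n+1}^{(1,1)}(x,x),K_{n+1}^{(1,1)}(x,x))^T$; in fact rows 2 and 3 coincide as well). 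Since the reduced determinant is a polynomial in $y$ vanishing at $y=x$, you pick up the third factor of $(y-x)$ for free. This is a clean shortcut: it avoids any appeal to Lemma~3.2 in this part of the argument, at the cost of not delivering the quantitative $O(n^8(y-x)^3)$ bound. That bound, however, is not used anywhere in Lemma~5.3 or its applications, so nothing is lost.

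Your treatment of the limit is also slightly more careful than the paper's: you track the $\operatorname{sign}(u)$ factor explicitly when passing from $q_n/d_n^{3/2}$ to $\Omega_{12}/\sqrt{\Delta}$, whereas the paper writes $\Omega_{12}/\sqrt{\Delta}=(\Omega_{12}\Delta/u^3)(u^2/\Delta)^{3/2}$ and calls it analytic near $0$, tacitly choosing an analytic branch of the square root rather than the nonnegative one. For real $u<0$ these differ by a sign, and your explicit bookkeeping shows that the two signs cancel, so the stated limit $H(u)/(1-S(u)^2)^{3/2}$ really is what one gets with the nonnegative square root used elsewhere in the paper.
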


\begin{proof}
We first perform row and column operations in the determinant defining
$\Delta _{12}$ and then expand using Taylor series. More precisely, we subtract
the first row from the second; then the first column from the second; and then we subtract $\frac{1}{y-x}$ $\times $ the second row from the third:
%
\begin{eqnarray}\label{eq5.10}
&&\Omega _{12}\Delta
\nonumber
\\
&=&\det \left [
\begin{array}{c@{\quad }c@{\quad }c}
K_{n+1}\left ( x,x\right ) & K_{n+1}\left ( x,y\right ) & K_{n+1}^{
\left ( 0,1\right ) }\left ( x,x\right )
\\
K_{n+1}\left ( y,x\right ) & K_{n+1}\left ( y,y\right ) & K_{n+1}^{
\left ( 0,1\right ) }\left ( y,x\right )
\\
K_{n+1}^{\left ( 1,0\right ) }\left ( y,x\right ) & K_{n+1}^{\left ( 0,1
\right ) }\left ( y,y\right ) & K_{n+1}^{\left ( 1,1\right ) }\left ( y,x
\right )%
\end{array}%
\right ]
\nonumber
\\
&=&\det \left [
\begin{array}{c@{\quad }c@{\quad }c}
K_{n+1}\left ( x,x\right ) & K_{n+1}\left ( x,y\right ) & K_{n+1}^{
\left ( 0,1\right ) }\left ( x,x\right )
\\
K_{n+1}\left ( y,x\right ) -K_{n+1}\left ( x,x\right ) & K_{n+1}
\left ( y,y\right ) -K_{n+1}\left ( x,y\right ) & K_{n+1}^{\left ( 0,1
\right ) }\left ( y,x\right ) -K_{n+1}^{\left ( 0,1\right ) }\left ( x,x
\right )
\\
K_{n+1}^{\left ( 1,0\right ) }\left ( y,x\right ) & K_{n+1}^{\left ( 0,1
\right ) }\left ( y,y\right ) & K_{n+1}^{\left ( 1,1\right ) }\left ( y,x
\right )%
\end{array}%
\right ]
\nonumber
\\
&=&\det \left [
\begin{array}{c@{\quad }c@{\quad }c}
K_{n+1}\left ( x,x\right ) & K_{n+1}\left ( x,y\right ) -K_{n+1}
\left ( x,x\right ) & K_{n+1}^{\left ( 0,1\right ) }\left ( x,x
\right )
\\
K_{n+1}\left ( y,x\right ) -K_{n+1}\left ( x,x\right ) &
\begin{array}{c}
K_{n+1}\left ( y,y\right ) +K_{n+1}\left ( x,x\right )
\\
-2K_{n+1}\left ( x,y\right )%
\end{array}
& K_{n+1}^{\left ( 0,1\right ) }\left ( y,x\right ) -K_{n+1}^{\left ( 0,1
\right ) }\left ( x,x\right )
\\
K_{n+1}^{\left ( 1,0\right ) }\left ( y,x\right ) & K_{n+1}^{\left ( 1,0
\right ) }\left ( y,y\right ) -K_{n+1}^{\left ( 1,0\right ) }\left ( y,x
\right ) & K_{n+1}^{\left ( 1,1\right ) }\left ( y,x\right )%
\end{array}%
\right ]
\nonumber
\\
&=&\det \left [
\begin{array}{c@{\quad }c@{\quad }c}
K_{n+1}\left ( x,x\right ) & K_{n+1}\left ( x,y\right ) -K_{n+1}
\left ( x,x\right ) & K_{n+1}^{\left ( 0,1\right ) }\left ( x,x
\right )
\\
K_{n+1}\left ( y,x\right ) -K_{n+1}\left ( x,x\right ) &
\begin{array}{c}
K_{n+1}\left ( y,y\right ) +K_{n+1}\left ( x,x\right )
\\
-2K_{n+1}\left ( x,y\right )%
\end{array}
& K_{n+1}^{\left ( 0,1\right ) }\left ( y,x\right ) -K_{n+1}^{\left ( 0,1
\right ) }\left ( x,x\right )
\\
\begin{array}{c}
K_{n+1}^{\left ( 1,0\right ) }\left ( y,x\right )
\\
-
\frac{K_{n+1}\left ( y,x\right ) -K_{n+1}\left ( x,x\right ) }{y-x}%
\end{array}
&
\begin{array}{c}
K_{n+1}^{\left ( 1,0\right ) }\left ( y,y\right ) -K_{n+1}^{\left ( 1,0
\right ) }\left ( y,x\right )
\\
-
\frac{K_{n+1}\left ( y,y\right ) +K_{n+1}\left ( x,x\right ) -2K_{n+1}\left (
x,y\right ) }{y-x}%
\end{array}
&
\begin{array}{c}
K_{n+1}^{\left ( 1,1\right ) }\left ( y,x\right )
\\
-
\frac{K_{n+1}^{\left ( 0,1\right ) }\left ( y,x\right ) -K_{n+1}^{\left (
0,1\right ) }\left ( x,x\right ) }{y-x}%
\end{array}%
\end{array}%
\right ].
\nonumber
\\
\end{eqnarray}
Let us examine the entries in the second and third rows. First, for some
$t$ between $x,y$,
\begin{equation*}
K_{n+1}\left ( y,x\right ) -K_{n+1}\left ( x,x\right ) =K_{n+1}^{
\left ( 1,0\right ) }\left ( t,x\right ) \left ( y-x\right ) =O\left (
n^{2}\left ( y-x\right ) \right )
\end{equation*}%
by {Lemma~\ref{lem3.2}}. Second, using the estimates from that lemma, for some
$r,s,v$ between $x,y$,
\begin{eqnarray*}
&&K_{n+1}\left ( y,y\right ) +K_{n+1}\left ( x,x\right ) -2K_{n+1}
\left ( x,y\right )
\\
&=&K_{n+1}\left ( x,x\right ) +\left ( y-x\right ) 2K_{n+1}^{\left ( 1,0
\right ) }\left ( x,x\right ) +\frac{1}{2}\left ( y-x\right ) ^{2}2
\left \{  K_{n+1}^{\left ( 1,1\right ) }\left ( r,r\right ) +K_{n+1}^{
\left ( 2,0\right ) }\left ( r,r\right ) \right \}
\\
&&+K_{n+1}\left ( x,x\right ) -2\left \{  K_{n+1}\left ( x,x\right ) +
\left ( y-x\right ) K_{n+1}^{(0,1)}\left ( x,x\right ) +\frac{1}{2}
\left ( y-x\right ) ^{2}K_{n+1}^{(0,2)}\left ( x,s\right ) \right \}
\\
&=&\left ( y-x\right ) ^{2}\left \{  K_{n+1}^{\left ( 1,1\right ) }
\left ( r,r\right ) +K_{n+1}^{\left ( 2,0\right ) }\left ( r,r\right )
-K_{n+1}^{(0,2)}\left ( x,s\right ) \right \}
\\
&=&O\left ( n^{3}\left ( y-x\right ) ^{2}\right ) .
\end{eqnarray*}%
Third,
\begin{equation*}
K_{n+1}^{\left ( 0,1\right ) }\left ( y,x\right ) -K_{n+1}^{\left ( 0,1
\right ) }\left ( x,x\right ) =O\left ( n^{3}\left ( y-x\right )
\right ) .
\end{equation*}%
Fourth, for some $t$ between $y,x$,
\begin{eqnarray*}
&&K_{n+1}^{\left ( 1,0\right ) }\left ( y,x\right ) -
\frac{K_{n+1}\left (
y,x\right ) -K_{n+1}\left ( x,x\right ) }{y-x}
\\
&=&K_{n+1}^{\left ( 1,0\right ) }\left ( y,x\right ) -K_{n+1}^{\left (
1,0\right ) }\left ( t,x\right ) =O\left ( n^{3}\left ( y-x\right )
\right ) .
\end{eqnarray*}%
Fifth, for some $r,\zeta ,s$ between $y,x$, with $r,s$ as above,
\begin{eqnarray*}
&&K_{n+1}^{\left ( 1,0\right ) }\left ( y,y\right ) -K_{n+1}^{\left ( 1,0
\right ) }\left ( y,x\right ) -
\frac{K_{n+1}\left ( y,y\right ) +K_{n+1}\left ( x,x\right )
-2K_{n+1}\left ( x,y\right ) }{y-x}
\\
&=&\left ( y-x\right ) K_{n+1}^{\left ( 1,1\right ) }\left ( y,\zeta
\right ) -\left ( y-x\right ) \left \{  K_{n+1}^{\left ( 1,1\right ) }
\left ( r,r\right ) +K_{n+1}^{\left ( 2,0\right ) }\left ( r,r\right )
-K_{n+1}^{(0,2)}\left ( x,s\right ) \right \}
\\
&=&\left ( y-x\right ) \left \{  K_{n+1}^{\left ( 1,1\right ) }\left ( y,
\zeta \right ) -K_{n+1}^{\left ( 1,1\right ) }\left ( r,r\right ) -K_{n+1}^{
\left ( 2,0\right ) }\left ( r,r\right ) +K_{n+1}^{(0,2)}\left ( x,s
\right ) \right \}
\\
&=&O\left ( n^{4}\left ( y-x\right ) ^{2}\right ) ,
\end{eqnarray*}%
by the estimates in {Lemma~\ref{lem3.2}}. Sixth, for some $\xi $ between
$x,y$,
\begin{eqnarray*}
&&K_{n+1}^{\left ( 1,1\right ) }\left ( y,x\right ) -
\frac{K_{n+1}^{\left (
0,1\right ) }\left ( y,x\right ) -K_{n+1}^{\left ( 0,1\right ) }\left ( x,x\right )
}{y-x}
\\
&=&K_{n+1}^{\left ( 1,1\right ) }\left ( y,x\right ) -K_{n+1}^{\left (
1,1\right ) }\left ( \xi ,x\right ) =O\left ( n^{4}\left ( y-x\right )
\right ) .
\end{eqnarray*}%
Then substituting all these into {(\ref{eq5.10})},
\begin{eqnarray*}
&&\Omega _{12}\Delta
\\
&=&\det \left [
\begin{array}{c@{\quad }c@{\quad }c}
O\left ( n\right ) & O\left ( n^{2}\left ( y-x\right ) \right ) & O
\left ( n^{2}\right )
\\
O\left ( n^{2}\left ( y-x\right ) \right ) & O\left ( n^{3}\left ( y-x
\right ) ^{2}\right ) & O\left ( n^{3}\left ( y-x\right ) \right )
\\
O\left ( n^{3}\left ( y-x\right ) \right ) & O\left ( n^{4}\left ( y-x
\right ) ^{2}\right ) & O\left ( n^{4}\left ( y-x\right ) \right )%
\end{array}%
\right ]
\\
&=&\left ( y-x\right ) ^{3}\det \left [
\begin{array}{c@{\quad }c@{\quad }c}
O\left ( n\right ) & O\left ( n^{2}\right ) & O\left ( n^{2}\right )
\\
O\left ( n^{2}\right ) & O\left ( n^{3}\right ) & O\left ( n^{3}
\right )
\\
O\left ( n^{3}\right ) & O\left ( n^{4}\right ) & O\left ( n^{4}
\right )%
\end{array}%
\right ] =O\left ( n^{8}\left ( y-x\right ) ^{3}\right ) .
\end{eqnarray*}%
Here we extracted factors of $y-x$ from the second and third rows, and
then the second column. It follows that as a polynomial in $u$,
$\left ( \Omega _{12}\Delta \right ) \left ( x,x+
\frac{u}{n\omega \left ( x\right ) }\right ) $ has a zero of multiplicity
at least $3$ at $0$. Then $\frac{\Omega _{12}\Delta }{u^{3}}$ is a polynomial
in $u$, and
$\frac{\Omega _{12}}{%
\sqrt{\Delta }}=\frac{\Omega _{12}\Delta }{u^{3}}\left (
\frac{u^{2}}{\Delta }%
\right ) ^{3/2}$, which is analytic in some neighborhood of $0$ that is
independent of $n,x,u$. The uniform convergence in {(\ref{eq5.5})} gives uniformly
for $u$ in compact subsets of $\mathbb{R}$,
\begin{eqnarray*}
&&\frac{\Omega _{12}}{\sqrt{\Delta }}\left (
\frac{1}{n\omega \left ( x\right )
}\right ) ^{2}
\\
&=&\left [
\frac{\Omega _{12}\Delta }{K_{n+1}\left ( x,x\right ) ^{3}}\left (
\frac{1}{n\omega \left ( x\right ) }\right ) ^{2}\right ]
\frac{K_{n+1}\left (
x,x\right ) ^{3}}{\Delta ^{3/2}}
\\
&=&\frac{H\left ( u\right ) }{\left ( 1-S^{2}\left ( u\right ) \right ) ^{3/2}}%
+o\left ( 1\right ) .
\end{eqnarray*}%
Also then, $H\left ( u\right ) $ necessarily has a zero of multiplicity
$\geq 3 $ at $0$. Finally, uniformly for $u$ in compact subsets of
$\mathbb{R}$,
\begin{eqnarray*}
&&\frac{\left \vert \Omega _{12}\right \vert }{\sqrt{\Delta }}
\arcsin \left (
\frac{\left \vert \Omega _{12}\right \vert }{\sqrt{\Omega _{11}\Omega _{22}}}%
\right ) \left ( \frac{1}{n\omega \left ( x\right ) }\right ) ^{2}
\\
&\leq &\frac{\left \vert \Omega _{12}\right \vert }{\sqrt{\Delta }}
\frac{\pi }{%
2}\left ( \frac{1}{n\omega \left ( x\right ) }\right ) ^{2}\leq C.\qedhere
\end{eqnarray*}%
\end{proof}

Now we can deduce the desired bound near the diagonal:%

\begin{proof}[Proof of {Lemma~\ref{lem2.4}}(b)]
Recall from {(\ref{eq2.6})} that
\begin{eqnarray*}
&&\left \vert \rho _{2}\left ( x,y\right ) \right \vert \left (
\frac{1}{n\omega \left ( x\right ) }\right ) ^{2}
\\
&\leq &\frac{1}{\pi ^{2}}\left ( \sqrt{
\frac{\Omega _{11}\Omega _{22}-\Omega _{12}^{2}}{\Delta }}+
\frac{\left \vert \Omega _{12}\right \vert }{\sqrt{\Delta }}\arcsin
\left (
\frac{\left \vert \Omega _{12}\right \vert }{\sqrt{\Omega _{11}\Omega _{22}}}
\right ) \right ) \left ( \frac{1}{n\omega \left ( x\right ) }%
\right ) ^{2}\leq C,
\end{eqnarray*}%
by {Lemma~\ref{lem5.1}}(a), (b) and {Lemmas~\ref{lem5.2}--\ref{lem5.3}}.
Next,\ from {(\ref{eq4.1})}, followed
by {(\ref{eq5.7})}, (with $u=0$ there)
%
\begin{equation}\label{eq5.11}
\frac{\rho _{1}\left ( x\right ) }{n\omega \left ( x\right ) }=
\frac{1}{\pi }%
\sqrt{
\frac{\Psi \left ( x\right ) }{K_{n+1}\left ( x,x\right ) ^{2}\left (
n\omega \left ( x\right ) \right ) ^{2}}}=\frac{1}{\sqrt{3}}+o\left ( 1
\right ) ,
\end{equation}%
and a similar asymptotic holds for $\rho _{1}\left ( y\right ) $. It follows
that
\begin{equation*}
\left \vert \rho _{2}\left ( x,y\right ) -\rho _{1}\left ( x\right )
\rho _{1}\left ( y\right ) \right \vert \left (
\frac{1}{n\omega \left ( x\right ) }%
\right ) ^{2}\leq C,
\end{equation*}%
which gives the result, since $\omega $ is positive and continuous in
$\left [ a,b\right ] $.
\end{proof}

\begin{proof}[Proof of {Lemma~\ref{lem2.5}}]
This follows directly from {(\ref{eq5.11})}.
\end{proof}

\section{Appendix - proof of {Lemma~\ref{lem2.2}}}
\label{sec6}

In this section, we prove {Lemma~\ref{lem2.2}}. The functions
$\rho _{2}\left ( x,y\right ) $ and $\rho _{1}\left ( x\right ) $ arising
in {(\ref{eq2.5})}, are called the second and the first intensities, or the two-point
and one-point correlation functions of zeros, see, e.g.,
\cite[pp. 7--8]{Houghetal2009}. By their defining properties, we have
\begin{equation*}
\mathbb{E}\left [ N_{n}\left ( \left [ a,b\right ] \right ) \right ] =
\int _{a}^{b}\rho _{1}\left ( x\right ) dx
\end{equation*}%
and
\begin{equation*}
\mathbb{E}\left [ N_{n}\left ( \left [ a,b\right ] \right ) \left ( N_{n}
\left ( \left [ a,b\right ] \right ) -1\right ) \right ] =\int _{a}^{b}
\int _{a}^{b}\rho _{2}\left ( x,y\right ) \text{ }dx\text{ }dy.
\end{equation*}%
Thus the variance of real zeros of random orthogonal polynomials in an
interval $\left [ a,b\right ] \subset \mathbb{R}$ can be written as in
{(\ref{eq2.5})} by completing the following steps:
\begin{eqnarray*}
&&\text{Var}\left [ N_{n}\left ( \left [ a,b\right ] \right ) \right ]
\\
&=&\mathbb{E}\left [ N_{n}\left ( \left [ a,b\right ] \right ) ^{2}-
\mathbb{E}%
\left [ N_{n}\left ( \left [ a,b\right ] \right ) \right ] ^{2}
\right ]
\\
&=&\mathbb{E}\left [ N_{n}\left ( \left [ a,b\right ] \right ) \left (
N_{n}\left ( \left [ a,b\right ] \right ) -1\right ) \right ] -
\mathbb{E}\left [ N_{n}\left ( \left [ a,b\right ] \right ) \right ] ^{2}+
\mathbb{E}\left [ N_{n}\left ( \left [ a,b\right ] \right ) \right ]
\\
&=&\int _{a}^{b}\int _{a}^{b}\rho _{2}\left ( x,y\right ) \text{ }dx
\text{ }%
dy-\int _{a}^{b}\int _{a}^{b}\rho _{1}\left ( x\right ) \rho _{1}
\left ( y\right ) \text{ }dx\text{ }dy+\int _{a}^{b}\rho _{1}\left ( x
\right ) dx
\\
&=&\int _{a}^{b}\int _{a}^{b}\left \{  \rho _{2}\left ( x,y\right ) -
\rho _{1}\left ( x\right ) \rho _{1}\left ( y\right ) \right \}
\text{ }dx\text{ }%
dy+\int _{a}^{b}\rho _{1}\left ( x\right ) dx.
\end{eqnarray*}

We follow the argument of \cite{GranvilleWigman2011} in several parts of
this proof. For $x,y\in {\mathbb{R}}$, define the random vector
\begin{equation*}
V=V(x,y):=\left ( G_{n}(x),G_{n}(y),G_{n}^{\prime }(x),G_{n}^{\prime }(y)
\right ) ^{T},
\end{equation*}%
and observe that the components of this vector are Gaussian random variables
satisfying
\begin{equation*}
{\mathbb{E}}[G_{n}(x)]={\mathbb{E}}[G_{n}^{\prime }(x)]=0,\ \mathrm{Var}%
[G_{n}(x)]=K_{n+1}(x,x)\ \text{and}\ \mathrm{Var}[G_{n}^{\prime }(x)]=K_{n+1}^{(1,1)}(x,x).
\end{equation*}%
The covariance matrix $\Sigma $ of $V$ is defined by
%
\begin{align}
\label{eq6.1}
\Sigma & =\Sigma (x,y)
\nonumber
\\
& :=%
\begin{bmatrix}
\mathrm{Var}[G_{n}(x)] & \mathrm{Cov}[G_{n}(x),G_{n}(y)] &
\mathrm{Cov}%
[G_{n}(x),G_{n}^{\prime }(x)] & \mathrm{Cov}[G_{n}(x),G_{n}^{\prime }(y)]
\\
\mathrm{Cov}[G_{n}(y),G_{n}(x)] & \mathrm{Var}[G_{n}(y)] &
\mathrm{Cov}%
[G_{n}(y),G_{n}^{\prime }(x)] & \mathrm{Cov}[G_{n}(y),G_{n}^{\prime }(y)]
\\
\mathrm{Cov}[G_{n}^{\prime }(x),G_{n}(x)] & \mathrm{Cov}[G_{n}^{
\prime }(x),G_{n}(y)] & \mathrm{Var}[G_{n}^{\prime }(x)] &
\mathrm{Cov}%
[G_{n}^{\prime }(x),G_{n}^{\prime }(y)]
\\
\mathrm{Cov}[G_{n}^{\prime }(y),G_{n}(x)] & \mathrm{Cov}[G_{n}^{
\prime }(y),G_{n}(y)] & \mathrm{Cov}[G_{n}^{\prime }(y),G_{n}^{
\prime }(x)] & \mathrm{Var}[G_{n}^{\prime }(y)]%
\end{bmatrix}
\nonumber
\\
& =%
\begin{bmatrix}
K_{n+1}(x,x) & K_{n+1}(x,y) & K_{n+1}^{(0,1)}(x,x) & K_{n+1}^{(0,1)}(x,y)
\\
K_{n+1}(x,y) & K_{n+1}(y,y) & K_{n+1}^{(0,1)}(y,x) & K_{n+1}^{(0,1)}(y,y)
\\
K_{n+1}^{(0,1)}(x,x) & K_{n+1}^{(0,1)}(y,x) & K_{n+1}^{(1,1)}(x,x) & K_{n+1}^{(1,1)}(x,y)
\\
K_{n+1}^{(0,1)}(x,y) & K_{n+1}^{(0,1)}(y,y) & K_{n+1}^{(1,1)}(x,y) & K_{n+1}^{(1,1)}(y,y)%
\end{bmatrix}%
,
\end{align}
exactly as in {(\ref{eq3.1})}. When $x=y$, the first row of $\Sigma $ is the same
as the second row, and hence $\det \Sigma =0$. Our first goal is to show
that $%
V $ has the multivariate normal distribution with mean zero and the covariance
matrix $\Sigma $ when $x\neq y$ and $n\geq 3$. This follows in a standard
way, e.g., from \cite[Corollary 16.2]{JacodProtter2003}, by proving that
$\Sigma $ is positive definite, which amounts to showing that
$\vec{v}^{T}\Sigma \vec{v}>0$ for all nonzero $\vec{v}\in {\mathbb{R}}^{4}$. Recall
that any covariance matrix is positive semi-definite
\cite[Theorem 12.4]{JacodProtter2003}, i.e., $\vec{v}^{T}\Sigma \vec{v}\geq 0$ for all
$\vec{v}%
\in {\mathbb{R}}^{4}$. This means we only need to demonstrate that
$\vec{v}%
^{T}\Sigma \vec{v}=0$ implies $\vec{v}=\vec{0}$. For a vector
$\vec{v}=%
\begin{bmatrix}
v_{1} & v_{2} & v_{3} & v_{4}%
\end{bmatrix}%
^{T}$, observe that
\begin{equation*}
\vec{v}^{T}\Sigma \vec{v}=\mathrm{Var}[\vec{v}^{T}V]=%
\sum _{k=0}^{n}(v_{1}p_{k}(x)+v_{2}p_{k}(y)+v_{3}p_{k}^{\prime }(x)+v_{4}p_{k}^{
\prime }\left ( y\right ) )^{2}.
\end{equation*}%
It is clear now that $\vec{v}^{T}\Sigma \vec{v}=0$ if and only if
%
\begin{equation}\label{eq6.2}
v_{1}p_{k}(x)+v_{2}p_{k}(y)+v_{3}p_{k}^{\prime }(x)+v_{4}p_{k}^{
\prime }(y)=0,\quad k=0,\ldots ,n.
\end{equation}%
But this system of equations has only trivial solution
$\vec{v}=\vec{0}$. Indeed, if we write
\begin{equation*}
Q_{n}(t)=\sum _{j=0}^{n}b_{j}p_{j}(t),
\end{equation*}%
where $\{b_{j}\}_{j=0}^{n}\subset {\mathbb{R}}$ is arbitrary, then {(\ref{eq6.2})}
implies that
%
\begin{equation}\label{eq6.3}
v_{1}Q_{n}(x)+v_{2}Q_{n}(y)+v_{3}Q_{n}^{\prime }(x)+v_{4}Q_{n}^{
\prime }(y)=0.
\end{equation}%
Since $\{p_{j}(x)\}_{j=0}^{n}$ is a basis for the vector space of all polynomials
of degree at most $n$ with real coefficients, the set of all polynomials
$Q_{n}(t)$ coincides with this space. In particular, since
$%
n\geq 3$ and $x\neq y$, we use the following choices for $Q_{n}$ in {(\ref{eq6.3})}
to conclude that
\begin{align*}
Q_{n}(t)=(t-x)(t-y)^{2}& \Rightarrow v_{3}=0;
\\
Q_{n}(t)=(t-x)^{2}(t-y)& \Rightarrow v_{4}=0;
\\
Q_{n}(t)=t-y& \Rightarrow v_{1}=0;
\\
Q_{n}(t)=t-x& \Rightarrow v_{2}=0.
\end{align*}%
We now write $\Sigma $ in the following block form
%
\begin{equation}\label{eq6.4}
\Sigma =%
\begin{bmatrix}
K_{n+1}(x,x) & K_{n+1}(x,y) & K_{n+1}^{(0,1)}(x,x) & K_{n+1}^{(0,1)}(x,y)
\\
K_{n+1}(x,y) & K_{n+1}(y,y) & K_{n+1}^{(0,1)}(y,x) & K_{n+1}^{(0,1)}(y,y)
\\
K_{n+1}^{(0,1)}(x,x) & K_{n+1}^{(0,1)}(y,x) & K_{n+1}^{(1,1)}(x,x) & K_{n+1}^{(1,1)}(x,y)
\\
K_{n+1}^{(0,1)}(x,y) & K_{n+1}^{(0,1)}(y,y) & K_{n+1}^{(1,1)}(x,y) & K_{n+1}^{(1,1)}(y,y)%
\end{bmatrix}%
=:%
\begin{bmatrix}
A & B
\\
B^{T} & C%
\end{bmatrix}%
,
\end{equation}%
where $A$, $B$ and $C$ are the corresponding $2\times 2$ matrices. Note
that $\det A=\Delta =0$ if and only if $x=y$ by the equality case in the
Cauchy-Schwarz inequality. Thus we define $\Omega =C-B^{T}A^{-1}B$ for
$%
x\neq y$, and write
\begin{equation*}
\Sigma =%
\begin{bmatrix}
A & \mathbf{0}
\\
B^{T} & \mathbf{I}%
\end{bmatrix}%
\begin{bmatrix}
\mathbf{I} & A^{-1}B
\\
\mathbf{0} & \Omega
\end{bmatrix}%
.
\end{equation*}%
The latter implies that
\begin{equation*}
\det \Sigma =\det A\,\det \Omega =\Delta \,\det \Omega .
\end{equation*}%
Since $\Sigma $ is invertible for $x\neq y$, so is $\Omega $ and thus
$\det \Omega >0$ if $x\neq y$. It also follows from {(\ref{eq6.4})} by direct algebraic
manipulations that the elements of the matrix
\begin{equation*}
\Omega =C-B^{T}A^{-1}B=%
\begin{bmatrix}
\Omega _{11} & \Omega _{12}
\\
\Omega _{12} & \Omega _{22}%
\end{bmatrix}%
\end{equation*}%
are as defined in {(\ref{eq2.8})}--{(\ref{eq2.10})}.

Since the random vector $V=V(x,y)$ has the multivariate normal distribution
$%
\mathcal{N}(\mathbf{0},\Sigma )$ with a non-singular covariance matrix
$%
\Sigma $, we compute the density of its distribution by
\cite[p. 130]{JacodProtter2003} in the form
\begin{align*}
p_{x,y}(0,0,t_{1},t_{2})& =
\frac{\exp {\left ( -\frac{1}{2}%
(0,0,t_{1},t_{2})\,\Sigma ^{-1}(0,0,t_{1},t_{2})^{T}\right ) }}{(2\pi )^{2}(\det \Sigma )^{1/2}}
\\
& =
\frac{\exp {\left ( -\frac{1}{2}(t_{1},t_{2})\,\Omega ^{-1}(t_{1},t_{2})^{T}\right ) }}{(2\pi )^{2}(\det \Sigma )^{1/2}}.
\end{align*}%
Using matrix algebra, we further obtain that
\begin{equation*}
\Sigma ^{-1}=%
\begin{bmatrix}
\lbrack A-BC^{-1}B^{T}]^{-1} & -A^{-1}B[C-B^{T}A^{-1}B]^{-1}
\\
-C^{-1}B^{T}[A-BC^{-1}B^{T}]^{-1} & [C-B^{T}A^{-1}B]^{-1}%
\end{bmatrix}%
.
\end{equation*}%
Theorem 3.2 of \cite[p. 71]{AzaisWschebor2009} states that if
$(a,b)\subset {\mathbb{R}}$, then
\begin{equation*}
{\mathbb{E}}[N_{n}(\left [ a,b\right ] )\left ( N_{n}(\left [ a,b
\right ] )-1\right ) ]=\iint _{D}\int _{{\mathbb{R}}}\int _{{
\mathbb{R}}%
}|t_{1}t_{2}|p_{x,y}(0,0,t_{1},t_{2})\,dt_{1}dt_{2}dxdy,
\end{equation*}%
where $D=\{(x,y)\in {\mathbb{R}}^{2}|\ a\leq x,y\leq b\}$. Hence
\begin{align*}
& {\mathbb{E}}[N_{n}(\left [ a,b\right ] )(N_{n}(\left [ a,b\right ] )-1)]
\\
& =\iint _{D}\int _{{\mathbb{R}}}\int _{{\mathbb{R}}}|t_{1}t_{2}|
\frac{\exp {%
\left ( -\frac{1}{2}(t_{1},t_{2})\Omega ^{-1}(t_{1},t_{2})^{T}\right ) }}{%
(2\pi )^{2}(\det \Sigma )^{1/2}}\,dt_{1}dt_{2}dxdy,
\\
& =\iint _{D}\int _{{\mathbb{R}}}\int _{{\mathbb{R}}}|t_{1}t_{2}|
\frac{\exp {%
\left ( -\frac{1}{2}(t_{1},t_{2})\Omega ^{-1}(t_{1},t_{2})^{T}\right ) }}{%
(2\pi )^{2}(\Delta \,\det \Omega )^{1/2}}\,dt_{1}dt_{2}dxdy,
\\
& =\frac{1}{4\pi ^{2}}\iint _{D}
\frac{I(x,y)}{\sqrt{\Delta \,\det \Omega }}%
\,dxdy,
\end{align*}%
where the inner integral is
\begin{equation*}
I(x,y)=\int _{{\mathbb{R}}}\int _{{\mathbb{R}}}|t_{1}t_{2}|\exp {
\left ( -\frac{%
1}{2}(t_{1},t_{2})\Omega ^{-1}(t_{1},t_{2})^{T}\right ) }\,dt_{1}dt_{2}.
\end{equation*}%
Note if $x\neq y$, we have
$\det \Omega =\Omega _{11}\Omega _{22}-\Omega _{12}^{2}>0$ and
\begin{equation*}
\Omega ^{-1}=\frac{1}{\det \Omega }%
\begin{bmatrix}
\Omega _{22} & -\Omega _{12}
\\
-\Omega _{12} & \Omega _{11}%
\end{bmatrix}%
.
\end{equation*}%
It follows that
\begin{equation*}
(t_{1},t_{2})\,\Omega ^{-1}\,(t_{1},t_{2})^{T}=
\frac{\Omega _{22}}{\det \Omega }\,t_{1}^{2}-2\,
\frac{\Omega _{12}}{\det \Omega }\,t_{1}t_{2}+
\frac{%
\Omega _{11}}{\det \Omega }\,t_{2}^{2}.
\end{equation*}%
Applying the result of \cite[(3.9)]{BleherDi1997}, we evaluate the inner
integral as
\begin{equation*}
I(x,y)=
\frac{4(\det \Omega )^{2}}{\Omega _{11}\Omega _{22}(1-\delta ^{2})}%
\left ( 1+\frac{\delta }{\sqrt{1-\delta ^{2}}}\arcsin \delta \right ) ,
\end{equation*}%
with
\begin{equation*}
\delta =-\frac{\Omega _{12}}{\sqrt{\Omega _{11}\Omega _{22}}}.
\end{equation*}%
Finally, putting everything together, we obtain
\begin{align*}
& {\mathbb{E}}[N_{n}(\left [ a,b\right ] )(N_{n}(\left [ a,b\right ] )-1)]
\\
& =\frac{1}{4\pi ^{2}}\iint _{D}
\frac{4(\det \Omega )^{2}}{\Omega _{11}\Omega _{22}(1-\delta ^{2})}
\left ( 1+\frac{\delta }{\sqrt{1-\delta ^{2}}}\arcsin \delta \right )
\frac{dx\,dy}{\sqrt{\Delta \,\det \Omega }}
\\
& =\frac{1}{\pi ^{2}}\iint _{D}\sqrt{\Omega _{11}\Omega _{22}-\Omega _{12}^{2}%
}\left ( 1-
\frac{\Omega _{12}}{\sqrt{\Omega _{11}\Omega _{22}-\Omega _{12}^{2}%
}}\arcsin \left ( -
\frac{\Omega _{12}}{\sqrt{\Omega _{11}\Omega _{22}}}%
\right ) \right ) \frac{dx_{1}\,dx_{2}}{\sqrt{\Delta }}
\\
& =\frac{1}{\pi ^{2}}\iint _{D}\left ( \sqrt{\Omega _{11}\Omega _{22}-
\Omega _{12}^{2}}+\Omega _{12}\arcsin
\frac{\Omega _{12}}{\sqrt{\Omega _{11}\Omega _{22}}}\right )
\frac{dx\,dy}{\sqrt{\Delta }}.
\end{align*}%
This and {Lemma~\ref{lem2.1}} give the result.
\qed


\begin{thebibliography}{}

\bibitem{Azaisetal2016}
J.-M. Aza\"{\i}s, F. Dalmao, J. R. Le\'{o}n, CLT for the zeros of classical
random trigonometric polynomials, Ann. Inst. Henri Poincar\'{e} Probab.
Stat. 52 (2016) 804--820.

\bibitem{AzaisLeon2013}
J.-M. Aza\"{\i}s, J. R. Le\'{o}n, CLT for crossings of random trigonometric
polynomials, Electron. J. Probab. 18 (2013).

\bibitem{AzaisWschebor2009}
J.-M. Aza\"{\i}s, M. Wschebor, Level sets and extrema of random processes
and fields, John Wiley \& Sons, Hoboken, 2009.

\bibitem{Badkov1979}
V. M. Badkov, Asymptotic behavior of orthogonal polynomials, Mat. Sb. (N.S.)
109 (151) (1979) 46--59. (Russian)

\bibitem{BakerGravesMorris1996}
G. A. Baker, Jr., P. R. Graves-Morris, Pad\'{e} Approximants, 2nd Ed., Cambridge University Press, Cambridge, 1996.

\bibitem{BeckenbachBellman1961}
E. Beckenbach, R. Bellman, Inequalities, Springer, Berlin, 1961.

\bibitem{BharuchaReidSambandham1986}
A. T. Bharucha-Reid and M. Sambandham, Random Polynomials, Academic Press,
Orlando, 1986.

\bibitem{BleherDi1997}
P. Bleher, X. Di, Correlations between zeros of a random polynomial, J.
Stat. Phys. 88 (1997) 269--305.

\bibitem{BlochPolya1932}
A. Bloch, G. P\'{o}lya, On the roots of certain algebraic equations, Proc.
London Math. Soc. 33 (1932) 102--114.

\bibitem{Das1971}
M. Das, Real zeros of a random sum of orthogonal polynomials, Proc. Amer.
Math. Soc. 27 (1971) 147--153.

\bibitem{DasBhatt1982}
M. Das, S. S. Bhatt, Real roots of random harmonic equations, Indian J.
Pure Appl. Math. 13 (1982) 411--420.

\bibitem{Doetal2020}
Y. Do, H. H. Nguyen, O. Nguyen, Random trigonometric polynomials: universality
and non-universality of the variance for the number of real roots, https://arxiv.org/abs/1912.11901.

\bibitem{EdelmanKostlan1995}
A. Edelman, E. Kostlan, How many zeros of a random polynomial are real?
Bull. Amer. Math. Soc. 32 (1995) 1--37.

\bibitem{ErdosOfford1956}
P. Erd\H{o}s, A. C. Offord, On the number of real roots of a random algebraic
equation, Proc. London Math. Soc. 6 (1956) 139--160.

\bibitem{Farahmand1996}
K. Farahmand, Level crossings of a random orthogonal polynomial, Analysis
16 (1996) 245--253.

\bibitem{Farahmand1998}
K. Farahmand, Topics in Random Polynomials, Pitman Res. Notes Math., 393,
Pitman, 1998.

\bibitem{Farahmand2001}
K. Farahmand, On random orthogonal polynomials, J. Appl. Math. Stochastic
Anal. 14 (2001) 265--274.

\bibitem{Freud1971}
G. Freud, Orthogonal Polynomials, Pergamon Press/ Akademiai Kiado, Budapest,
1971.

\bibitem{GranvilleWigman2011}
A. Granville, I. Wigman, The distribution of the zeros of random trigonometric
polynomials, Amer. J. Math. 133 (2011) 295--357.

\bibitem{Houghetal2009}
J. B. Hough, M. Krishnapur, Y. Peres, B. Virag, Zeros of Gaussian Analytic
Functions and Determinantal Point Processes, Amer. Math. Soc., Providence,
RI, 2009.

\bibitem{JacodProtter2003}
J. Jacod, P. Protter, Probability Essentials, Springer-Verlag, Berlin,
2003.

\bibitem{Kac1943}
M. Kac, On the average number of real roots of a random algebraic equation,
Bull. Amer. Math. Soc. 49 (1943) 314--320.

\bibitem{Kac1948}
M. Kac, On the average number of real roots of a random algebraic equation.
II, Proc. London Math. Soc. 50 (1948) 390--408.

\bibitem{LancasterTismenetsky1985}
P. Lancaster, M. Tismenetsky, The Theory of Matrices, Academic Press, San
Diego, 1985.

\bibitem{LevinLubinsky2001}
E. Levin, D. S. Lubinsky, Orthogonal Polynomials for Exponential Weights,
Academic Press, New York, 2001.

\bibitem{LittlewoodOfford1938}
J. E. Littlewood, A. C. Offord, On the number of real roots of a random
algebraic equation, J. Lond. Math. Soc. 13 (1938) 288--295.

\bibitem{LittlewoodOfford1939}
J. E. Littlewood, A. C. Offord, On the number of real roots of a random
algebraic equation. II, Math. Proc. Camb. Phil. Soc. 35 (1939) 133--148.

\bibitem{Lubinsky2008}
D. S. Lubinsky, Universality limits in the bulk for arbitrary measures
on compact sets, J. Anal. Math. 106 (2008) 373--394.

\bibitem{Lubinsky2009}
D. S. Lubinsky, A new approach to universality limits involving orthogonal
polynomials, Ann. of Math. 170 (2009) 915--939.

\bibitem{Lubinskyetal2016}
D. S. Lubinsky, I. E. Pritsker, X. Xie, Expected number of real zeros for
random linear combinations of orthogonal polynomials, Proc. Amer. Math.
Soc. 144 (2016) 1631--1642.

\bibitem{Lubinskyetal2018}
D. S. Lubinsky, I. E. Pritsker, X. Xie, Expected number of real zeros for
random orthogonal polynomials, Math. Proc. Camb. Phil. Soc. 164 (2018)
47--66.

\bibitem{Maslova1974}
N. B. Maslova, On the variance of the number of real roots of random polynomials,
Theor. Probab. Appl. 19 (1974) 35--52.

\bibitem{NguyenVu2020}
O. Nguyen, V. Vu, Random polynomials: central limit theorems for the real
roots, https://arxiv.org/abs/1904.04347.

\bibitem{StahlTotik1992}
H. Stahl, V. Totik, General Orthogonal Polynomials, Cambridge University
Press, Cambridge, 1992.

\bibitem{SuShao2012}
Z. Su, Q. Shao, Asymptotics of the variance of the number of real roots
of random trigonometric polynomials, Science China Math. 55 (2012) 2347--2366.

\bibitem{Totik2000A}
V. Totik, Asymptotics for Christoffel functions for general measures on
the real line, J. Anal. Math. 81 (2000) 283--303.

\bibitem{Totik2009}
V. Totik, Universality and fine zero spacing on general sets, Ark. Mat.
47 (2009) 361--391.

\bibitem{Widom1969}
H. Widom, Extremal polynomials associated with a system of curves in the
complex plane, Adv. Math. 3 (1969) 127-232.

\bibitem{Wilkins1997}
J. E. Wilkins, Jr., The expected value of the number of real zeros of a
random sum of Legendre polynomials, Proc. Amer. Math. Soc. 125 (1997) 1531--1536.

\bibitem{Wilkins1988}
J. E. Wilkins, Jr., An asymptotic expansion for the expected number of
real zeros of a random polynomial, Proc. Amer. Math. Soc. 103 (1988) 1249--1258.

\bibitem{Xie2016}
X. Xie, Statistics of the number of real zeros of random orthogonal polynomials,
Ph.D. Dissertation, Oklahoma State University, Stillwater, 2016.

\end{thebibliography}
\end{document}